\newcommand{\C}{\mathbb{C}}
\newcommand{\Q}{\mathbb{Q}}
\newcommand{\Z}{\mathbb{Z}}
\newcommand{\F}{\mathbb{F}}
\newtheorem{theorem}{Theorem}[section]
\newtheorem{lemma}[theorem]{Lemma}
\newtheorem{proposition}[theorem]{Proposition}
\newtheorem{corollary}[theorem]{Corollary}
\newtheorem{theoremLET}{Theorem}
\newtheorem{propositionLET}[theoremLET]{Proposition}
\newtheorem{corollaryLET}[theoremLET]{Corollary}
\newtheoremstyle{BoldRemark} % name
{10pt}                       % Space above
{10pt}                       % Space below
{\upshape}                   % Body font
{}                           % Indent amount
{\bfseries}                  % Theorem head font
{.}                          % Punctuation after theorem head
{.5em}                       % Space after theorem head
{}  % Theorem head spec (can be left empty, meaning ‘normal’)
\theoremstyle{BoldRemark}
\newtheorem{remark}[theorem]{Remark}
\newtheorem{example}[theorem]{Example}
\newtheorem{definitionLET}[theoremLET]{Definition}
\newtheorem{definition}[theorem]{Definition}
\newtheorem{notation}[theorem]{Notation}
\newtheorem{conjecture}[theorem]{Conjecture}
\newtheorem{problem}[theorem]{Problem}
\newcommand{\Res}{\mathrm{Res}}
\newcommand{\Coind}{\mathrm{CoInd}}
\newcommand{\FP}{\mathrm{FP}}
\newcommand{\Mod}{\mathrm{Mod}}
\newcommand{\Mack}{\mathrm{Mack}}
\newcommand{\Tamb}{\mathrm{Tamb}}
\newcommand{\Green}{\mathrm{Green}}
\newcommand{\Set}{\mathrm{Set}}
\newcommand{\Ring}{\mathrm{Ring}}
\title{Clarification and coinduction of Tambara functors}
\author{Noah Wisdom}
\date{}
\begin{document}
\begin{abstract}
    Tambara functors are equivariant analogues of rings arising in representation theory and equivariant homotopy theory. We introduce the notion of a clarified Tambara functor and show that under mild conditions every Tambara functor admits a decomposition as a product of coinductions of clarified Tambara functors; projection onto the non-coinduced part defines a reflective localization we call clarification. Through this perspective we study Morita invariance and $K$-theory of Tambara functors, field-like Tambara functors, and Nullstellensatzian clarified Tambara functors. 
\end{abstract}

\maketitle
\tableofcontents

\section{Introduction}

Equivariant algebra concerns the study of certain generalizations of finite group actions on standard algebraic objects: Mackey functors generalize abelian groups and modules with $G$-action, and Green and Tambara functors generalize rings with $G$-action, for $G$ a finite group. Originally introduced by Dress \cite{Dre71}, Green \cite{Gre71}, and Tambara \cite{Tam93} to organize structure present in group cohomology and representation rings among many other things, the basic objects of equivariant algebra have turned up surprisingly often in equivariant homotopy theory. In particular, Brun has observed \cite{Bru05} that the homotopy groups of a $G$-$\mathbb{E}_\infty$ ring spectrum inherit the structure of a Tambara functor, and the norm structures of Tambara functors are known to be closely related to the Hill-Hopkins-Ravenel norms involved in the solution of the Kervaire invariant one problem \cite{HHR}. 

We focus our attention on $G$-Tambara functors. Roughly speaking, these may be thought of as generalizations of Galois field extensions $K \rightarrow L$ with Galois group $G$. In this situation, for each subgroup $H$ of $G$ we have a fixed-point intermediate field $L^H$, inclusions and conjugations between the various fixed-point fields, residual Weyl group $W_G H$ actions on fixed-points $L^H$, and Galois-theoretic norm and trace maps between the fixed-point subfields. A Tambara functor may be thought of as relaxing the condition that the inclusions of subfields are injective, and additionally relaxing the assumption that everything in sight is a field. In particular, a Tambara functor $R$ consists of the data of rings $R(G/H)$ for each subgroup $H$ of $G$, with contravariant functoriality along maps of transitive $G$-sets, and covariantly functorial norms and transfers satisfying many conditions. A Green functor is just a Tambara functor without any norms.

The results of the present article arose out of attempts to generalize some of the results of the author's previous work \cite{Wis24}. Namely, while Tambara functors are equivariant algebra analogues of rings, the notion of a field-like Tambara functor \cite{Nak11a} is an equivariant analogue of a field. There is a certain functor, coinduction, denoted by $\mathrm{Coind}_H^G$, from the category of $H$-Tambara functors to the category of $G$-Tambara functors, and in \cite{Wis24}, the author shows that when $G = C_{p^n}$ is cyclic of prime power order, every field-like $G$-Tambara functor is the coinduction of a particularly nice kind of field-like $G$-Tambara functor. Coinduction admits a concrete description, is a right adjoint, and enjoys many other nice properties, so this gives a good handle on field-like $C_{p^n}$-Tambara functors.

There is also a coinduction functor from $H$-rings to $G$-rings, which sends an $H$-ring $R$ to the product $R^{\times |G/H|}$ with a $G$-action that transtively permutes the factors. This functor is very closely related to coinduction for Tambara functors, and one of our main results shows that this relationship is even closer than it seems at first glance. In short, one may detect that a Tambara functor is coinduced merely by checking that its $G/e$ level is coinduced as a $G$-ring.

\begin{theoremLET}[cf. \cref{prop:coinduction-is-detected-at-bottom}]
	Let $R$ be a $G$-Tambara functor. If the $G$-ring $R(G/e)$ is isomorphic to $\mathrm{Coind}_H^G S$ for some $H$-ring $S$, then there is a $G$-Tambara functor isomorphism $R \cong \mathrm{Coind}_H^G T$ for some $H$-Tambara functor $T$ with $T(H/e) \cong S$.
\end{theoremLET}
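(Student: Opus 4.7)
The strategy is to bootstrap the given $G$-ring-level decomposition of $R(G/e)$ to an analogous decomposition at every level $R(G/K)$, use it to construct $T$ as an idempotent summand of $\mathrm{res}_H^G R$, and then check the comparison map obtained by adjunction is an isomorphism. First I translate the hypothesis: the isomorphism $R(G/e) \cong \mathrm{Coind}_H^G S$ provides a complete set of orthogonal idempotents $\{e_{gH}\}_{gH \in G/H}$ in $R(G/e)$, summing to $1$, that $G$ permutes in the same way it permutes the set of cosets $G/H$. Let $e := e_{eH}$, whose stabilizer in $G$ is exactly $H$.

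To lift $e$ to higher levels I set $f_H := N_e^H(e) \in R(G/H)$, and $f_K := \mathrm{res}_K^H(f_H) \in R(G/K)$ for each $K \leq H$. Multiplicativity of the norm gives $f_H^2 = N_e^H(e^2) = N_e^H(e) = f_H$, so the $f_K$ are idempotents; and the Mackey formula for $\mathrm{res}_e^H \circ N_e^H$ yields $\mathrm{res}_e^K(f_K) = \prod_{h \in H} h \cdot e = e$, where the product collapses because $H$ fixes $e$. I then define the candidate $T$ by $T(H/K) := f_K \cdot R(G/K)$ for $K \leq H$, with restrictions and conjugations inherited trivially, transfers descending via Frobenius reciprocity $\mathrm{tr}_{K'}^K(f_{K'} x) = f_K \cdot \mathrm{tr}_{K'}^K(x)$, and norms descending provided the compatibility $N_{K'}^K(f_{K'}) = f_K$ holds for all $K' \leq K \leq H$. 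Assuming this holds, the multiplication-by-$f_K$ projections assemble to an $H$-Tambara surjection $\mathrm{res}_H^G R \twoheadrightarrow T$, whose transpose under $\mathrm{res}_H^G \dashv \mathrm{Coind}_H^G$ is the desired comparison $\varphi : R \to \mathrm{Coind}_H^G T$.

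To verify that $\varphi$ is an isomorphism I would work levelwise using the double-coset formula $\mathrm{Coind}_H^G T(G/K) \cong \prod_{KgH \in K \backslash G/H} T(H/(H \cap g^{-1}Kg))$. At each level, the $K$-translates of $f_K$ (and of its $G$-conjugates $g \cdot f_{g^{-1}Kg \cap H}$, grouped by double coset) furnish orthogonal idempotents in $R(G/K)$ whose sum is $1$—this is essentially the content of the $K$-orbit decomposition of $G/H$—and these orthogonal pieces line up precisely with the product factors on the right hand side. On $G/e$ the map reduces to the given isomorphism $R(G/e) \cong \mathrm{Coind}_H^G S$, confirming $T(H/e) \cong S$.

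The principal obstacle, I expect, is the norm-compatibility identity $N_{K'}^K(f_{K'}) = f_K$. One easily checks agreement after restricting to $R(G/e)$, but this does not imply the equality in $R(G/K)$. Resolving this should require genuinely using that the $\{e_{gH}\}$ come from a coinduction-of-rings structure on $R(G/e)$, not merely that $e$ is an idempotent with stabilizer $H$: my plan is to expand $N_{K'}^K \circ \mathrm{res}_{K'}^K$ via the Tambara double-coset formula and compare term by term with the analogous computation inside the model ring $\mathrm{Coind}_H^G S$, where the equality is transparent. A secondary subtlety is checking that the complementary idempotent $1 - \sum_{g} g \cdot f_K$ (summed over a transversal for the trivial double coset alone) actually vanishes, which again should reduce to the corresponding fact in $\mathrm{Coind}_H^G S$ once the norm compatibility is secured.
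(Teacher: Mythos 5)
Your construction of $T$ via the idempotents $f_K$ is the right starting point and is close to what the paper does, but you misidentify where the real difficulty lies. The norm-compatibility identity $\mathrm{Nm}_{K'}^K(f_{K'}) = f_K$, which you flag as the principal obstacle, is actually immediate: using the double coset formula for $\mathrm{Res}_K^H \circ \mathrm{Nm}_e^H$ together with the fact that $H$ fixes $e$, one sees that $f_K = \mathrm{Res}_K^H \mathrm{Nm}_e^H(e)$ equals $\mathrm{Nm}_e^K(e)$ (up to an idempotent power, which is harmless), and then $\mathrm{Nm}_{K'}^K \mathrm{Nm}_e^{K'}(e) = \mathrm{Nm}_e^K(e)$ is just functoriality of norms.

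The genuine gap is in the ``line up orthogonal pieces with product factors'' step, and specifically at levels $G/L$ where $L$ is \emph{not} subconjugate to $H$. First, a type-checking issue: the elements $g \cdot f_{g^{-1}Lg \cap H}$ you propose to use live in $R(G/(L \cap {}^g H))$, not in $R(G/L)$, so they cannot furnish an idempotent decomposition of $R(G/L)$ as written; you would have to norm them up to level $L$. Second, and more seriously, when $L$ is not subconjugate to any conjugate of $H$, the relevant norm $\mathrm{Nm}_e^L(g \cdot e)$ vanishes for every $g$ (this is precisely \cref{lem:coind-at-bottom-implies-surj-transfer} in the paper: one finds $\gamma \in L$ outside the stabilizer of $g \cdot e$, and $\mathrm{Nm}_e^L(g\cdot e) = \mathrm{Nm}_e^L(g\cdot e)\mathrm{Nm}_e^L(\gamma g\cdot e) = \mathrm{Nm}_e^L(0)=0$). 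So there is no complete set of norm-idempotents in $R(G/L)$ to line up with the product decomposition of $\mathrm{Coind}_H^G T(G/L)$, and your ``secondary subtlety'' about a complementary idempotent vanishing does not reduce to an easy statement---it is the heart of the matter. The paper handles this with a wholly different mechanism: the fact that all $\mathrm{Nm}_e^L(g\cdot e)$ vanish, combined with \cref{lem:norm-is-additive-up-to-proper-transfers}, shows that $1 \in R(G/L)$ is a transfer from a $G$-set $X$ all of whose isotropy groups are subconjugate to $H$. Surjectivity of $\varphi$ at level $L$ then follows by factoring through the already-established isomorphism at level $X$, and injectivity follows from Frobenius reciprocity with that transfer relation ($y = \mathrm{Tr}(x)\cdot y = \mathrm{Tr}(x\cdot \mathrm{Res}\,y) = 0$ if $y$ restricts to zero). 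You will also find that even at levels $K \leq H$, identifying the non-identity double-coset factors with the appropriate idempotent summands of $R(G/K)$ is not automatic and is carried out in the paper by an induction on $|G|$ (\cref{lem:bottom-few-levels-are-coinduced}), using the conjugation action to relate the $\ell_g$ across double cosets.
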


From this we immediately obtain a generalization of one of the main results of \cite{Wis24}, from $G = C_{p^n}$ a cyclic group of prime power order to arbitrary finite $G$.

\begin{corollaryLET}
	Let $k$ be a field-like $G$-Tambara functor. Then $k \cong \mathrm{Coind}_H^G \ell$ for some field-like $H$-Tambara functor $\ell$ such that $\ell(H/e)$ is a field.
\end{corollaryLET}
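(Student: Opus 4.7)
The plan is to reduce to the preceding theorem by analyzing the underlying $G$-ring $k(G/e)$. Granted that theorem, it suffices to exhibit a subgroup $H \le G$ and a field $L$ with $k(G/e) \cong \mathrm{Coind}_H^G L$ as $G$-rings, and then to verify that the resulting $H$-Tambara functor $\ell$ is itself field-like.

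The first step is a structural lemma: whenever $k$ is a field-like $G$-Tambara functor, the commutative $G$-ring $k(G/e)$ is isomorphic to a product of copies of a single field with $G$ permuting the factors transitively. The mechanism is that any nonzero $G$-stable ideal $I \subset k(G/e)$ can be enlarged to a proper nonzero Tambara ideal of $k$ by closing under restrictions, transfers, and norms, so field-likeness of $k$ forces $k(G/e)$ to be $G$-simple as a commutative ring. A standard classification of $G$-simple commutative rings then yields $k(G/e) \cong \mathrm{Coind}_H^G L$, with $H$ the stabilizer of a primitive idempotent and $L \cong k(G/e)^H$ a field. Applying the preceding theorem produces an $H$-Tambara functor $\ell$ with $k \cong \mathrm{Coind}_H^G \ell$ and $\ell(H/e) \cong L$.

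It remains to check that $\ell$ is field-like. Nonzeroness is automatic since $\ell(H/e) \cong L$ is a field. For the absence of proper nonzero Tambara ideals, I would argue that a proper nonzero Tambara ideal $J \subset \ell$ coinduces to a proper nonzero Tambara ideal of $\mathrm{Coind}_H^G \ell \cong k$, contradicting field-likeness of $k$. This uses that coinduction is a right adjoint to restriction and hence preserves subobjects, combined with the level description $(\mathrm{Coind}_H^G \ell)(G/e) \cong \ell(H/e)^{[G:H]}$ to witness that a nontrivial ideal of $\ell$ remains nontrivial after coinduction.

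The principal obstacle is the structural lemma in the first step: one must verify carefully that a nonzero $G$-stable ideal of $k(G/e)$ does generate a proper Tambara ideal in $k$, and that $G$-simplicity of $k(G/e)$ as a commutative $G$-ring suffices to force the $\mathrm{Coind}_H^G L$ form. The first point hinges on the interaction between $G$-stable ideals and Tambara norms, which is the delicate compatibility that a Tambara ideal is required to satisfy, while the second is essentially Galois-theoretic bookkeeping once one works with primitive idempotents in $k(G/e)$.
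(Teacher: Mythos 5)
Your proposal is correct in substance, and it takes a genuinely different route from the paper in two places. For the structural fact that $k(G/e) \cong \mathrm{Coind}_H^G L$ for a field $L$, the paper simply cites this as a consequence of Nakaoka's criteria for field-like Tambara functors, whereas you derive it from scratch by showing $k(G/e)$ must be $G$-simple. That argument does go through: given a nonzero $G$-stable ideal $I \subset k(G/e)$, the assignment $J(G/H) := (\mathrm{Res}_e^H)^{-1}(I)$ is a Tambara ideal (the double coset formula plus $G$-stability of $I$ give closure under transfers and norms after restricting to level $G/e$), and $J(G/e) = I$ is proper, so $J$ is a proper nonzero Tambara ideal, contradicting field-likeness. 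The ``principal obstacle'' you flag is thus surmountable, but you should supply this verification rather than only asserting that closure under the three kinds of structure maps produces something proper, since properness is the nonobvious part. For showing $\ell$ is field-like, the paper's argument is shorter: restrictions in $k$ are injective by Nakaoka's criteria, hence restrictions in $\ell$ are injective, and since $\ell(H/e)$ is a field, Nakaoka's criteria (in the other direction) gives field-likeness directly. Your alternative — that a nonzero proper Tambara ideal of $\ell$ coinduces to a nonzero proper Tambara ideal of $k$ — works but relies on the fact that coinduction sends Tambara ideals to Tambara ideals, which is a nontrivial statement proved later in the paper (\cref{prop:coinduction-induces-bijection-on-Tambara-ideals}); there is no circularity, but it is heavier machinery than needed. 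One small imprecision there: you appeal to the level $G/e$ to detect nontriviality of $\mathrm{Coind}_H^G J$, but $J(H/e)$ could be zero even if $J \neq 0$; the correct statement is that some level $J(H/K)$ is nonzero (resp.\ proper), and then the corresponding factor of $(\mathrm{Coind}_H^G J)(G/K)$ witnesses the same. What your approach buys is independence from the precise formulation of Nakaoka's criteria; what the paper's approach buys is brevity and avoidance of any forward references.
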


One might wonder what happens when $R(G/e)$ is not coinduced. If $R(G/e)$ is a Noetherian ring, then one can show that $R(G/e)$ is $G$-equivariantly isomorphic to a product over subgroups $H$ of $G$ of coinductions of $H$-rings which themselves are not coinduced. We introduce the notion of \emph{clarified} to formalize the notion of not being coinduced.

\begin{definitionLET}
	A $G$-ring $R$ is \emph{clarified} if it does not contain any idempotents $d$ with both of the following properties: 
	\begin{enumerate}
		\item the isotropy group of $d$ is a proper subgroup of $G$, and
		\item the $G$-orbits of $d$ which are not equal to $d$ are orthogonal to $d$, that is, $gd \neq d$ implies $d \cdot gd = 0$.
	\end{enumerate}
\end{definitionLET}

We call a Tambara functor $R$ clarified if the $G$-ring $R(G/e)$ is. Additionally, one may show (with a fair bit of work) that if $R(G/e)$ admits a $G$-equivariant decomposition as a product of two $G$-rings, then in fact $R$ admits a $G$-Tambara functor decomposition as a product of two $G$-Tambara functors. Combining these results, we obtain our main theorem.

\begin{theoremLET}[cf. \cref{thm:product-decomposition}]
	Let $R$ be a Tambara functor such that $R(G/e)$ is Noetherian. Then there exists an isomorphism 
    \[ 
        R \cong \prod_{H \subset G} \mathrm{Coind}_H^G R_H 
    \] 
    such that $R_H$ is a clarified $H$-Tambara functor.
\end{theoremLET}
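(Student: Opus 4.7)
The plan is to first decompose $R(G/e)$ as a $G$-ring into a product indexed by conjugacy classes of subgroups of $G$, with each factor a coinduction of a clarified $H$-ring, and then to lift this decomposition to a Tambara functor decomposition of $R$.

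For the ring-theoretic step, Noetherianity of $R(G/e)$ ensures that the set $\Pi$ of primitive idempotents of $R(G/e)$ is finite, that its elements are pairwise orthogonal, and that they sum to $1$. The group $G$ acts on $\Pi$. For each conjugacy class $[H]$ of subgroups of $G$, let $\Omega_{[H]} \subset \Pi$ be the union of $G$-orbits whose stabilizers lie in $[H]$, and set $e_{[H]} := \sum_{d \in \Omega_{[H]}} d$. Each $e_{[H]}$ is a $G$-fixed idempotent, and together they give a $G$-equivariant ring decomposition $R(G/e) \cong \prod_{[H]} e_{[H]} R(G/e)$. Choose for each $[H]$ a set $T_H \subset \Omega_{[H]}$ of representatives of the $G$-orbits in $\Omega_{[H]}$ whose stabilizers are literally $H$; since coinduction of rings is a right adjoint and so preserves finite products, one obtains $e_{[H]} R(G/e) \cong \mathrm{Coind}_H^G B_H$, where $B_H := \prod_{t \in T_H} tR(G/e)$ is an $H$-ring (here $tR(G/e)$ is $H$-stable because $H$ fixes $t$). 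Each $tR(G/e)$ has no nontrivial idempotents by primitivity of $t$, so the idempotents of $B_H$ are exactly the subsums of elements of $T_H$, all of which are $H$-fixed; hence $B_H$ is (vacuously) a clarified $H$-ring.

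For the lifting step, I would apply the result quoted immediately before the theorem --- that a $G$-equivariant product decomposition of $R(G/e)$ lifts to a Tambara functor product decomposition of $R$ --- inductively to the factors constructed above, obtaining $R \cong \prod_{[H]} R^{[H]}$ as $G$-Tambara functors with $R^{[H]}(G/e) \cong \mathrm{Coind}_H^G B_H$. Applying Theorem~A to each $R^{[H]}$ then gives $R^{[H]} \cong \mathrm{Coind}_H^G R_H$ for some $H$-Tambara functor $R_H$ with $R_H(H/e) \cong B_H$. Since $B_H$ is clarified as an $H$-ring, $R_H$ is clarified as an $H$-Tambara functor by definition, completing the proof.

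The main obstacle --- flagged in the excerpt itself as requiring ``a fair bit of work'' --- is the ring-to-Tambara lifting, namely the assertion that any $G$-equivariant product decomposition of $R(G/e)$ extends to a product decomposition of $R$ as a Tambara functor. Once that ingredient and Theorem~A are in hand, the rest of the argument reduces to idempotent bookkeeping made finite by the Noetherian hypothesis, together with the fact that $\mathrm{Coind}_H^G$ preserves finite products (which is immediate from its being a right adjoint).
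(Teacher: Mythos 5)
Your proof is correct and takes essentially the same approach as the paper: decompose $R(G/e)$ into primitive idempotents (finitely many by Noetherianity), organize their $G$-orbits to exhibit $R(G/e)$ as a product of coinductions of clarified $H$-rings, then lift via the two cited ingredients (that $G$-equivariant product decompositions of $R(G/e)$ lift to Tambara functor decompositions, and that coinduction of Tambara functors is detected at the bottom level). The paper's ring-theoretic step (its Proposition on Noetherian $G$-rings) is organized slightly differently — it first argues that the maximal finite complete set of orthogonal idempotents is unique and $G$-stable, then decomposes $I \cong \sqcup_j G/H_j$ as a $G$-set — but your direct bookkeeping by conjugacy class of stabilizer is equivalent and arguably a bit cleaner.
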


The norms present in Tambara functors are actually essential in the proof: we observe that \cref{thm:product-decomposition} fails for Green functors, which are essentially Tambara functors without norms, in \cref{ex:product-splitting-fails-for-Green-functors}. In fact, the failure is pretty catastrophic, as there are multiple distinct ingredients in the proof which make very heavy use of the norms present in a Tambara functor.

This result is actually more powerful than it might initially seem, as any map into a Tambara functor which may be written as a product in this way becomes very understandable. Indeed, we know how to map into categorical products, via their universal property. Additionally, coinduction is right adjoint to a very simple functor: restriction. If the domain of our morphism also admits a product decomposition of the above form, then we can push our analysis yet further. One may show that coinduced Tambara functors admit no maps into clarified Tambara functors, roughly speaking. In some sense we may therefore view \cref{thm:product-decomposition} as providing a basis in which every Tambara functor morphism is upper triangular. If our morphism is an automorphism, then we observe in \cref{prop:product-thm-diagonalizes} that it must be diagonal.

As evidence that clarified Tambara functors are a natural object to consider, we provide a lengthy list of examples of them. Additionally, we observe that Tambara functors which fit into global Tambara functors or global power functors tend to be clarified, although we decline to formalize this observation into a proposition. This is mainly due to the lack of sufficient technical foundations on these objects; in the presence of these foundations, it should be very easy to prove that any Tambara functor arising from a global object is clarified.

\begin{propositionLET}[cf. \cref{prop:examples}]
	The following Tambara functors are clarified.
	\begin{enumerate}
		\item The Burnside Tambara functor of \cref{ex:Burnside-Tamb-functor}.
		\item The representation Tambara functor of \cref{ex:representation-Tamb-functor}.
		\item Any constant Tambara functor (fixed-point Tambara functors associated to rings with trivial action).
		\item The fixed-point $G$-Tambara functor associated to any Galois field extension with Galois group $G$.
		\item Any free polynomial algebra over a clarified Tambara functor (cf. \cref{def:free-poly-algs}).
		\item The Tambara functor associated to the assignment $G/K \mapsto \mathrm{H}^{2*}(K;\mathrm{Z})$ of the group cohomology of $K$ with coefficients in $\mathrm{Z}$ (with the trivial action) \cite{Eve63} \cite{Tam93}.
		\item $\underline{\pi}_0(MU_G)$, for $MU_G$ the $G$-equivariant (homotopical) complex cobordism spectrum.
	\end{enumerate}
	Consequently if $G$ is nontrivial, there are no Tambara functor morphisms from any coinduced Tambara functor to one of the above form.
\end{propositionLET}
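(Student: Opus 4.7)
The plan is to reduce each item to an analysis of the $G$-ring $R(G/e)$, since the clarified condition depends only on this level. The key observation is that if $G$ acts trivially on $R(G/e)$ then every idempotent has full isotropy $G$, so condition (1) of the clarified definition is vacuous and $R$ is automatically clarified. This handles items (1), (2), (3), (6), and (7): in each case I would verify that $R(G/e)$ is $\Z$ (or a similar ring) with trivial $G$-action, namely $A(G/e) = \Z$; the representation Tambara functor at $G/e$ gives the representation ring of the trivial group, which is $\Z$; constant Tambara functors have trivial action by construction; $\mathrm{H}^{2*}(\{e\};\Z) = \Z$; and $\underline{\pi}_0(MU_G)(G/e) = \pi_0(MU) = \Z$.

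For (4), a Galois extension $K \to L$ gives $\underline{L}(G/e) = L$, a field, whose only idempotents are $0$ and $1$, both $G$-fixed. For (5), the plan is to identify the $G/e$ level of a free polynomial Tambara algebra over $R$ as an ordinary polynomial ring $R(G/e)[Y]$ on a $G$-set $Y$ of variables, using \cref{def:free-poly-algs}. The classical fact that idempotents of a polynomial ring $R'[Y]$ coincide with those of $R'$ then gives a $G$-equivariant bijection between idempotents of $R(G/e)[Y]$ and of $R(G/e)$ which preserves isotropy groups and orthogonality relations, so any free polynomial algebra over a clarified Tambara functor is again clarified.

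For the final assertion, suppose for contradiction that $\phi \colon \mathrm{Coind}_H^G S \to R$ is a Tambara functor morphism with $H$ a proper subgroup of $G$ and $R$ one of the nontrivial examples above. At level $G/e$ we have $\mathrm{Coind}_H^G S(G/e) = S(H/e)^{\times [G:H]}$, whose coordinate idempotents $e_1, \ldots, e_{[G:H]}$ are mutually orthogonal, sum to $1$, and are $G$-permuted so that some $e_i$ has isotropy exactly $H$. Their images $d_i = \phi_{G/e}(e_i) \in R(G/e)$ inherit these relations and the $G$-action. If some $d_i$ has proper isotropy then its $G$-orbit consists of distinct $d_j$'s orthogonal to it (since $d_i d_j = 0$ for $i \neq j$), directly witnessing a failure of clarified-ness and producing the desired contradiction. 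Otherwise every $d_i$ is $G$-fixed and so all equal, say $d_i = d$; then $d_1 d_2 = 0$ combined with $d_1 = d_2 = d = d^2$ gives $d = 0$, whence $\sum d_i = 0$ contradicts $\sum d_i = 1$ in the nontrivial ring $R(G/e)$.

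The main obstacle is item (5), which requires the structural identification of the $G/e$ level of a free polynomial Tambara algebra as an honest polynomial ring over $R(G/e)$; everything else follows from elementary observations about trivial actions, fieldhood, or the triviality of the bottom level. Items (6) and (7) additionally rely on citation of the Tambara functor structure from the literature, but once this is granted, the trivial-action observation settles both immediately.
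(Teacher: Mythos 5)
Your proposal is correct and follows essentially the same approach as the paper: reduce to the $G/e$ level, handle items (1)--(4), (6), (7) by observing that the action is trivial or the bottom-level ring is $\Z$ (or a field), treat (5) by identifying the $G/e$ level of a free polynomial $k$-algebra as a polynomial ring over $k(G/e)$ and using that polynomial rings introduce no new idempotents (the paper's \cref{lem:free-poly-k-algs-are-clarified}, ``for degree reasons''), and deduce the final morphism statement from the fact that images of orthogonal idempotent orbits witness a type-$H$ idempotent in the codomain. The paper routes the last assertion through \cref{prop:map-into-clarified-uniquely-factors-through-quotient} and \cref{lem:idempotent-orbits-analysis}, but your direct unpacking is the same argument.
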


We note that coinduced Tambara functors arise ``in nature" fairly often, so that the above proposition applies to concrete examples of interest. For example, the main result of \cite{SSW24} roughly says that the algebraically closed fields in the category of $G$-Tambara functors are all coinduced. Additionally, in \cite[Theorem A]{Wis25c} it is shown that the adjunction units $k \rightarrow \Coind_H^G \Res_H^G k$ are all \'{e}tale (and in some cases these are essentially all of the finite \'{e}tale $k$-algebras, cf. \cite[Theorem C]{Wis25c}).

There are applications to other things besides morphisms of Tambara functors. One would expect products to behave nicely against most aspects of Tambara functors. What is somewhat surprising is that coinduction also behaves extremely nicely.

We warn the reader that induction and coinduction are the same for Mackey functors, but not for Green or Tambara functors. We will not require induction of Green or Tambara functors, so we opt to refer only to coinduction everywhere for uniformity, but whenever we mention the coinduction of a Mackey functor $M$, this is the same thing as the induction of $M$.

There is a notion of Mackey functor module over a Green functor, which we review in \cref{sec:review}. If $k$ is a Tambara functor, then we may forget the data of its norms, and obtain a Green functor. By a module over a Tambara functor, we mean a module over the underlying Green functor. 

The first suggestion that the result below should be true was explained to the author by Mike Hill, who asserted that it is an algebraic analogue of the main results of \cite{BDS15}. A proof of the special case $H = e$ of this result was obtained previously by Schuchardt, Spitz, and the author in \cite{SSW24}, and a proof of the special case of $G$ abelian was obtained previously by Chan and the author in \cite{CW25}.

\begin{theoremLET}[cf. \cref{prop:coind-induces-module-cat-iso}]
	Let $k$ be an $H$-Green functor and $H \subset G$. Coinduction induces a symmetric monoidal equivalence of categories, from the category of $k$-modules to the category of $\mathrm{Coind}_H^G k$-modules.
\end{theoremLET}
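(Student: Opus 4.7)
The plan is to model the argument on \cite{BDS15} in this purely algebraic setting: I will define the functor $F : \Mod_k \to \Mod_{\mathrm{Coind}_H^G k}$ by module-level coinduction, produce a left adjoint $G$, and verify that the unit and counit are isomorphisms via a Wirthm\"uller-type projection formula. Concretely, set $F(M) := \mathrm{Coind}_H^G M$ by applying Mackey-functor coinduction to the underlying Mackey functor of $M$; the lax symmetric monoidality of $\mathrm{Coind}_H^G$ (inherited from being right adjoint to the strong symmetric monoidal $\mathrm{Res}_H^G$) endows $\mathrm{Coind}_H^G M$ with a canonical $\mathrm{Coind}_H^G k$-action through the composite
\[
	\mathrm{Coind}_H^G k \boxover{G} \mathrm{Coind}_H^G M \to \mathrm{Coind}_H^G(k \boxover{H} M) \to \mathrm{Coind}_H^G M,
\]
whose second arrow uses the $k$-action on $M$. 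The candidate left adjoint is $G(N) := k \boxover{\mathrm{Res}_H^G \mathrm{Coind}_H^G k} \mathrm{Res}_H^G N$, obtained by extending scalars along the Green-functor counit $\epsilon : \mathrm{Res}_H^G \mathrm{Coind}_H^G k \to k$. The adjunction $G \dashv F$ follows by combining the Mackey-functor adjunction $\mathrm{Res}_H^G \dashv \mathrm{Coind}_H^G$ with the extension/restriction-of-scalars adjunction $\epsilon_! \dashv \epsilon^*$, after checking that the Mackey-level hom bijection restricts to one compatible with the module structures.

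The central obstacle, and the bulk of the technical work, is establishing the projection formula
\[
	\mathrm{Res}_H^G \mathrm{Coind}_H^G M \;\cong\; \bigl(\mathrm{Res}_H^G \mathrm{Coind}_H^G k\bigr) \boxover{k} M
\]
natural in the $k$-module $M$. This identity is a form of Wirthm\"uller ambidexterity: on underlying Mackey functors, $\mathrm{Coind}_H^G$ agrees with $\mathrm{Ind}_H^G$, which, being left adjoint to the strong symmetric monoidal $\mathrm{Res}_H^G$, satisfies the standard projection formula $\mathrm{Ind}_H^G(M' \boxover{H} \mathrm{Res}_H^G N') \cong \mathrm{Ind}_H^G M' \boxover{G} N'$. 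Applying $\mathrm{Res}_H^G$ to the case $N' = \mathrm{Coind}_H^G k$ and using strong monoidality of $\mathrm{Res}_H^G$ yields the displayed identity.

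Granted this projection formula, the counit reduces to a collapse of iterated tensor products:
\[
	GF(M) = k \boxover{\mathrm{Res}_H^G \mathrm{Coind}_H^G k} \mathrm{Res}_H^G \mathrm{Coind}_H^G M \cong k \boxover{\mathrm{Res}_H^G \mathrm{Coind}_H^G k} \bigl(\mathrm{Res}_H^G \mathrm{Coind}_H^G k \boxover{k} M\bigr) \cong M.
\]
The unit $\eta_N : N \to FG(N)$ is verified similarly: under the Mackey-functor adjunction it corresponds to the canonical extension-of-scalars map $\mathrm{Res}_H^G N \to G(N)$, and combined with the projection formula this identifies $FG(N) = \mathrm{Coind}_H^G G(N)$ with $N$---reflecting the intuition that a $\mathrm{Coind}_H^G k$-module $N$ is canonically reconstructible from the ``identity double-coset summand'' $G(N)$ of its restriction.

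Finally, upgrading the equivalence to a symmetric monoidal one reduces to checking that $F$ preserves tensor products up to natural isomorphism, $F(M \boxover{k} M') \cong F(M) \boxover{\mathrm{Coind}_H^G k} F(M')$. This is yet another instance of the projection formula and may be deduced either directly or, automatically once we have the equivalence, by transferring the tautological iso $G\bigl(F(M) \boxover{\mathrm{Coind}_H^G k} F(M')\bigr) \cong M \boxover{k} M'$ through $G \dashv F$ using $GF \cong \mathrm{id}$.
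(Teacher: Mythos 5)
Your proposal takes a genuinely different route from the paper's. The paper proves this by hand: it uses the double-coset decomposition of \cref{lem:arbitrary-res-of-coind-of-HTamb} to produce idempotents $d_{g,L}$ in $\mathrm{Coind}_H^G k(G/L)$, defines $N(H/L) := d_{e,L}M(G/L)$ for any $\mathrm{Coind}_H^G k$-module $M$, and then re-runs the arguments of \cref{lem:bottom-few-levels-are-coinduced} and \cref{prop:coinduction-is-detected-at-bottom} at the module level (with Frobenius reciprocity standing in for the ring-theoretic steps) to get essential surjectivity, and deduces full faithfulness and symmetric monoidality by projecting onto the identity double-coset factor. You instead package everything into a Wirthm\"uller-style adjunction $G \dashv F$ and a single projection formula, which is conceptually cleaner and closer in spirit to \cite{BDS15}; it also has the advantage of producing both the unit/counit isomorphisms and symmetric monoidality from the same lemma, rather than three separate arguments.

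The gap is in your justification of the central projection formula
\[
	\mathrm{Res}_H^G \mathrm{Coind}_H^G M \;\cong\; \bigl(\mathrm{Res}_H^G \mathrm{Coind}_H^G k\bigr) \boxover{k} M.
\]
You claim this drops out of the Mackey-functor identity $\mathrm{Ind}_H^G(M' \boxtimes_H \mathrm{Res}_H^G N') \cong \mathrm{Ind}_H^G M' \boxtimes_G N'$ ``by applying $\mathrm{Res}_H^G$ to the case $N' = \mathrm{Coind}_H^G k$''. That manipulation does not produce the stated identity: with $M' = M$ and $N' = \mathrm{Coind}_H^G k$ you get $\mathrm{Res}_H^G\mathrm{Coind}_H^G(M \boxtimes_H A) \cong (\mathrm{Res}_H^G\mathrm{Coind}_H^G M) \boxtimes_H A$ where $A = \mathrm{Res}_H^G\mathrm{Coind}_H^G k$, and no choice of $M'$, $N'$ gives the relative tensor over $k$ on the right-hand side. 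Moreover your display is an isomorphism of $A$-modules, and the $k$-algebra structure on $A$ (a product over double cosets of adjunction units composed with conjugations, which is not even canonical without a choice of coset representatives) has to be matched against the $A$-module structure on $\mathrm{Res}_H^G\mathrm{Coind}_H^G M$ coming from lax monoidality; you have not addressed this compatibility. The formula is in fact true and can be proved, for instance, by constructing the comparison map from lax monoidality together with the $\mathrm{Ind}\dashv\mathrm{Res}$ unit and then checking factor-by-factor in the double-coset decomposition of \cref{lem:arbitrary-res-of-coind-of-HTamb} that it is an isomorphism of $A$-modules, but that verification is essentially the same amount of idempotent-tracking the paper does, so it should be carried out rather than asserted. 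Once the projection formula is established with its $A$-module structure, the rest of your argument (the counit and unit collapses and the monoidality check) goes through.
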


\cref{thm:product-decomposition} therefore may be interpreted as a statement about Morita equivalence: when $R(G/e)$ is Noetherian, then it is Morita equivalent to a collection of clarified Tambara functors (see \cref{thm:Morita-equiv} for a precise statement). In fact, more is true. This equivalence also determines an equivalence between the category of Tambara functor algebras.

\begin{corollaryLET}[cf. \cref{cor:coind-gives-equiv-of-Tamb-algebra-cats}]
	Coinduction induces an equivalence between the slice category of $H$-Tambara functors under $k$ and the slice category of $G$-Tambara functors under $\mathrm{Coind}_H^G k$.
\end{corollaryLET}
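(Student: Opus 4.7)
The plan is to exhibit coinduction itself as the equivalence: the functor sends $(k \to B)$ to $(\mathrm{Coind}_H^G k \to \mathrm{Coind}_H^G B)$ and acts functorially on morphisms under $k$, so the task reduces to verifying essential surjectivity and fully faithfulness.

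For essential surjectivity, I would take a morphism $f \colon \mathrm{Coind}_H^G k \to A$ of $G$-Tambara functors and show $A \cong \mathrm{Coind}_H^G A'$ for some $H$-Tambara functor $A'$ compatibly with the structure maps. The $G$-ring $(\mathrm{Coind}_H^G k)(G/e)$ is isomorphic to $k(H/e)^{\times [G:H]}$ with the factors transitively permuted by $G$ and stabilized by $H$; it therefore carries a distinguished collection of orthogonal idempotents summing to $1$. Their images under $f(G/e)$ form a system of orthogonal idempotents in $A(G/e)$ summing to $1$ and transitively permuted by $G$ with stabilizer $H$, which exhibits $A(G/e)$ as $G$-equivariantly coinduced from $H$. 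Applying \cref{prop:coinduction-is-detected-at-bottom} produces $A'$ with $A \cong \mathrm{Coind}_H^G A'$, and the structure map $f$ transports to the desired map $k \to A'$ under $k$.

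For fully faithfulness, morphisms $\mathrm{Coind}_H^G B_1 \to \mathrm{Coind}_H^G B_2$ in the target slice are in particular maps of $\mathrm{Coind}_H^G k$-modules, and by \cref{prop:coind-induces-module-cat-iso} these are in natural bijection with $k$-module maps $B_1 \to B_2$. The symmetric monoidal part of that equivalence ensures multiplicative structure transports under the bijection, so commutative algebra maps correspond to commutative algebra maps. For norms, one observes that coinduction is right adjoint to restriction and that restriction preserves the norm maps arising from arbitrary maps of finite $G$-sets; the norm structure on a coinduced Tambara functor is therefore determined by the norm structure on the $H$-Tambara functor being coinduced, and the bijection on morphisms restricts to one on norm-preserving morphisms.

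The main obstacle will be the last claim: promoting the symmetric monoidal module equivalence of \cref{prop:coind-induces-module-cat-iso} to the full Tambara structure, including norms, is not formal — norms are additional data not determined by the symmetric monoidal structure. The cleanest route is probably to combine the adjoint description of coinduction with the idempotent decomposition from \cref{prop:coinduction-is-detected-at-bottom}, reducing norm-preservation to a tautological check on $H$-Tambara functor maps $k \to A'$. Verifying that the two composites of the equivalence are naturally isomorphic to the identity is then immediate from the essential uniqueness built into \cref{prop:coinduction-is-detected-at-bottom}.
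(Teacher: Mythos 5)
Your strategy matches the paper's: coinduction is the equivalence, essential surjectivity comes from detecting that the target is coinduced, and fully faithfulness reduces to the Green functor module equivalence of \cref{prop:coind-induces-module-cat-iso} plus a norm-compatibility check. There is one genuine difference in the mechanism and one gap you correctly flag but do not close.

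For essential surjectivity, you route directly through \cref{prop:coinduction-is-detected-at-bottom} by tracing idempotents through $f(G/e)$; this works (one has to observe that if the stabilizer of $f(\delta_{eH})$ were strictly larger than $H$, then $f(\delta_{eH})$ would equal one of its own orthogonal translates, forcing $f(\delta_{eH}) = 0$ and hence $A = 0$, which is still coinduced). The paper instead routes through the Green functor module equivalence to first get $A \cong \mathrm{Coind}_H^G B$ as Green functors, and then applies \cref{prop:coind-has-unique-Tamb-structure} to lift the Tambara structure. The paper's route cleanly separates the module/algebra-level identification from the Tambara-structure lift; yours entangles essential surjectivity with the compatibility of the structure map, which you then have to revisit.

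The gap is exactly where you say it is. Asserting that "the bijection on morphisms restricts to one on norm-preserving morphisms" is the whole content of fullness, and it is not a formal consequence of the symmetric monoidal module equivalence, since norms are extra data. The resolution the paper gives, and which you gesture at but do not complete, is the concrete description of the inverse functor: given a Tambara map $\mathrm{Coind}_H^G B_1 \to \mathrm{Coind}_H^G B_2$, its preimage on the Green level is obtained by applying $\mathrm{Res}_H^G$ and projecting onto the identity double coset factor of $\mathrm{Res}_H^G \mathrm{Coind}_H^G B_i$. Both of these operations are manifestly compatible with norms (restriction is a Tambara functor map, and the projection onto the identity double coset factor is projection onto a Tambara functor factor of a product of Tambara functors, cf.\ \cref{prop:bottom-level-reflects-products}), so the resulting Green functor map $B_1 \to B_2$ is automatically a Tambara functor map. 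Stating that explicitly would close your argument; the paper also needs \cref{prop:coind-has-unique-Tamb-structure} as a standalone ingredient for essential surjectivity, which your variant replaces with \cref{prop:coinduction-is-detected-at-bottom}, so you do save a proposition at the cost of a slightly more delicate idempotent check.
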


These are evidently fairly useful results: in \cite[Theorem C]{Wis25c}, the author uses \cref{thm:product-decomposition} to classify finite \'{e}tale extensions of the constant $G$-Tambara functor $\underline{\F}$ associated to any algebraically closed field $\F$, which leads to a complete description of affine \'{e}tale group schemes over $\underline{\F}$ in \cite[Corollary D]{Wis25c}. On the other hand, in \cite{Wis25b}, the author uses \cref{cor:coind-gives-equiv-of-Tamb-algebra-cats} and \cref{thm:Morita-equiv} to show that the Nakaoka spectrum of $\Coind_H^G R$ is naturally homeomorphic to the Nakaoka spectrum of $R$ for $R$ any $H$-Tambara functor and to construct a stratification of the Nakaoka spectrum of any Tambara functor (cf. \cite[Theorem A, Definition B]{Wis25b}).

Finally, we conclude with a study of Nullstellensatzian objects in the category of clarified Tambara functors. Introduced in \cite{BSY22}, Nullstellensatzian objects are generalizations of algebraically closed fields, by abstraction of Hilbert's Nullstellensatz. Previous work of the author joint with Schuchardt and Spitz classifies the Nullstellensatzian $G$-Tambara functors: they are precisely the coinductions $\mathrm{Coind}_e^G \F$ of algebraically closed fields. In fact, we are able to give a short proof of this result with the machinery we build up, which uses the same ideas as one of the two proofs given in \cite{SSW24}.

Since anything coinduced is not clarified, it is reasonable to expect Nullstellensatzian clarified Tambara functors to more closely resemble algebraically closed fields. We show that this is generically false.

\begin{theoremLET}[cf. \cref{thm:NSS-clarified-field-like-are-boring}]
	Let $k$ be a Nullstellensatzian clarified $G$-Tambara functor which is field-like and such that the characteristic of the field $k(G/G)$ does not divide $|G|$. Then $k(G/e)$ is algebraically closed.
\end{theoremLET}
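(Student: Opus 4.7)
The plan is to invoke the Nullstellensatzian hypothesis on a carefully chosen finitely presented $k$-algebra to produce a root in $K := k(G/e)$ of every monic nonconstant $f \in K[x]$.

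First I would observe that $K$ is a field. The earlier corollary on field-like $G$-Tambara functors presents $k$ as $\mathrm{Coind}_H^G \ell$ for some field-like $H$-Tambara functor $\ell$ with $\ell(H/e)$ a field. If $H \neq G$, then $k(G/e) \cong \ell(H/e)^{|G/H|}$ carries the transitive $G$-action permuting factors, so the primitive idempotents of the product have proper isotropy and are orthogonal to their nontrivial $G$-translates, contradicting the clarified hypothesis. Hence $H = G$ and $K = \ell(H/e)$ is a field.

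Given such an $f$, I would form $R := k\langle x \rangle/(f(x))$, defined as the quotient of the free $k$-Tambara algebra $k\langle x \rangle$ on one generator $x$ at level $G/e$ by the Tambara ideal generated by $f(x) \in k\langle x\rangle(G/e)$. This $R$ is finitely presented, and by the universal property any $k$-algebra morphism $R \to A$ corresponds to an element $\alpha \in A(G/e)$ with $f(\alpha) = 0$. Once $R \neq 0$ is established, the Nullstellensatzian hypothesis yields a $k$-algebra map $R \to k$, whose associated element is a root of $f$ in $K$, completing the proof.

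The substantive work, and the main anticipated obstacle, is to show $R \neq 0$. I would identify
\[
R(G/e) \;\cong\; K[x_g : g \in G]\big/\bigl((gf)(x_g) : g \in G\bigr) \;\cong\; \bigotimes_{g \in G} K[x]/((gf)(x)),
\]
where the tensor product is over $K$, yielding a nonzero finite-dimensional $K$-algebra. The delicate point is ensuring that the Tambara ideal $(f(x))$ does not acquire extra elements at level $G/e$ via restrictions of transfers and norms of $f(x)$ from higher levels. I expect the characteristic hypothesis to play its role precisely here: since field-likeness makes $k(G/G)$ a field and restriction embeds it into $K^G$, the hypothesis forces $|G|$ to be invertible in $K$, and Maschke-semisimplicity of $K[G]$ should control the interaction of higher-level Tambara data with the $G/e$-level ideal, preventing degenerate collapse. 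Alternatively, one could establish nonzeroness by producing a nonzero $k$-algebra target for $R$: extend the $G$-action on $K$ to a separable closure $\bar{K}^{\mathrm{sep}}$ (via Zorn) and map $R$ into the associated fixed-point $G$-Tambara functor, which naturally carries a $k$-algebra structure and tautologically contains roots of $f$.
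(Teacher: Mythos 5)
Your proposal is not the same as the paper's, and the difference is not cosmetic: it contains a gap that the paper's choice of construction is specifically designed to avoid.

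\textbf{Where the approaches diverge.} The paper works with the free algebra on a generator $x_{G/G}$ at the \emph{top} level and with a polynomial $f$ over $k(G/G)$ that is irreducible over $K := k(G/e)$. Then $(k[x_{G/G}]/(f))(G/e) \cong K[t]/(f)$ is a field, with $G$ fixing $t$, so the quotient is a field-like, clarified, finitely presented $k$-algebra on the nose; the Nullstellensatz applies immediately, producing a root, and then the characteristic hypothesis (via Artin's lemma and the fixed-point structure) finishes the job Galois-theoretically. You instead take a generator at level $G/e$ and an arbitrary $f$ over $K$, so that $R(G/e) \cong \bigotimes_{g\in G} K[x]/((gf)(x))$. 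This is the crux of the difficulty, not a side issue.

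\textbf{The genuine gap: clarifiedness, not nonzeroness.} The Nullstellensatzian hypothesis is about the category of \emph{clarified} $k$-algebras. To apply it you need $R$ to be compact and \emph{nonterminal in that category}, i.e.\ you need $R^{\mathrm{clar}} \neq 0$, not merely $R \neq 0$. These are not the same: \cref{ex:clarification-study-for-deg-two-ext-of-Q} and \cref{prop:box-product-of-clarified-is-not-clarified} show that a nonzero box product of clarified Tambara functors can have zero clarification, and your $R(G/e)$ is exactly such a tensor product, with idempotents whose $G$-orbits can be nontrivial and orthogonal. So the delicate point is not whether the Tambara ideal $(f(x))$ ``acquires extra elements'' at level $G/e$ (it does not --- it is the ring ideal generated by the $G$-conjugates of $f(x_e)$, by an argument via the double coset and exponential formulas), but whether the resulting quotient has nonzero clarification. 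Your proposal does not address this, and Maschke semisimplicity alone will not resolve it, since the obstruction is about idempotent isotropy, not about extensions of modules.

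\textbf{The alternative is close but also has a hole.} Mapping $R$ into $\mathrm{FP}(L)$ for a suitable big $G$-field $L$ \emph{would} establish $R^{\mathrm{clar}} \neq 0$, since $\mathrm{FP}(L)$ is clarified and the map factors through $R^{\mathrm{clar}}$ by \cref{prop:map-into-clarified-uniquely-factors-through-quotient}. However: (a) you should use an algebraic closure $\bar{K}$, not a separable closure, since you want roots of arbitrary $f$; and (b) more seriously, ``extend the $G$-action via Zorn'' does not produce a group action on $\bar{K}$. Extending each $g \in G$ separately gives automorphisms $\tilde{g}$ that need not satisfy $\tilde{g}\tilde{h} = \widetilde{gh}$; what you actually need is a group homomorphism $G \to \mathrm{Aut}(\bar{K}/K^G)$ lifting $G \twoheadrightarrow \mathrm{Gal}(K/K^G)$, i.e.\ a splitting of a profinite surjection, which does not exist in general. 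This is the step where the paper's Artin's-lemma argument does real work that your sketch has not replaced.

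\textbf{Characteristic hypothesis.} The paper uses the characteristic assumption concretely: it forces $k$ to be fixed-point, hence $K/k(G/G)$ to be Galois, which is what makes the final Galois-theoretic step close. Your proposal gestures at Maschke semisimplicity without a concrete use, which is a symptom of the argument not actually engaging with the hypothesis. If you want to pursue a $G/e$-level-generator approach, the work is to identify $R^{\mathrm{clar}}$ directly, and the characteristic hypothesis should enter in controlling the idempotent structure of $\bigotimes_g K[x]/((gf)(x))$; I would expect it to reappear in the form of a statement that $K/k(G/G)$ is a Galois extension.
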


Indeed, Artin-Schreier theory \cite{AS27a} \cite{AS27b} implies that either $G$ acts trivially on any algebraically closed field $k(G/e)$, or, if $G$ acts nontrivially, then the $G$-action factors through a faithful $C_2$-action and the characteristic of $k(G/e)$ is zero. For many groups $G$, it is not hard to construct Nullstellensatzian clarified $G$-Tambara functors $k$ with $G$ acting faithfully on $k(G/e)$, and we consequently obtain the following.

\begin{corollaryLET}[cf. \cref{cor:existence-of-non-field-like-NSS-clarifieds}]
	Let $G$ be a nontrivial finite group and assume there exists a finite field extension $L$ over $K$ with Galois group $G$. If $\mathrm{char}(L) \neq 0$ or $G \neq C_2$ then there exists a Nullstellensatzian clarified $G$-Tambara functor which is not field-like and whose $G/e$ level is not a field.
\end{corollaryLET}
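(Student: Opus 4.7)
The plan is to take a Nullstellensatzian clarified $G$-Tambara functor $\tilde{k}$ equipped with a morphism from the fixed-point Tambara functor $\underline{L}$ of the Galois extension $L/K$, and to derive a contradiction assuming $\tilde{k}$ is field-like by combining \cref{thm:NSS-clarified-field-like-are-boring} with classical Artin-Schreier theory. By \cref{prop:examples}, $\underline{L}$ is clarified and field-like with $\underline{L}(G/e) = L$ and faithful Galois $G$-action. Such a $\tilde{k}$ should exist by general nonsense on Nullstellensatzian envelopes in reflective subcategories, applied to the reflective subcategory of clarified $G$-Tambara functors.

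Suppose for contradiction $\tilde{k}$ is field-like, so in particular $\tilde{k}(G/e)$ is a field. The induced ring map $L \to \tilde{k}(G/e)$ is injective, so the Galois $G$-action on $L$ forces $G$ to act faithfully on $\tilde{k}(G/e)$ as well. Provided $\mathrm{char}(\tilde{k}(G/G)) = \mathrm{char}(K) \nmid |G|$, \cref{thm:NSS-clarified-field-like-are-boring} then forces $\tilde{k}(G/e)$ to be algebraically closed. Classical Artin-Schreier theory~\cite{AS27a}\cite{AS27b} asserts that the only nontrivial finite group that can act faithfully on an algebraically closed field is $C_2$ acting on the algebraic closure of a real closed field of characteristic zero. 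This is precisely the case excluded by the hypothesis that $\mathrm{char}(L) \neq 0$ or $G \neq C_2$, giving the desired contradiction and showing $\tilde{k}$ is not field-like.

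Two issues remain. First, the conclusion of the above argument is only that $\tilde{k}$ fails to be field-like, not the stronger statement that $\tilde{k}(G/e)$ itself is not a field; second, the case $\mathrm{char}(L) = p$ with $p \mid |G|$ falls outside the hypotheses of \cref{thm:NSS-clarified-field-like-are-boring}. Both can be addressed by replacing $\underline{L}$ at the outset by the clarified $\underline{L}$-algebra $\underline{L} \times \underline{L}$, which is clarified because its nontrivial $G/e$-level idempotents $(1,0)$ and $(0,1)$ are $G$-fixed and thus have non-proper isotropy, so the clarified condition is vacuously satisfied by them. Any Nullstellensatzification $\tilde{k}$ of this product in the clarified category receives the two orthogonal $G$-fixed idempotents, and provided they remain distinct and nonzero in $\tilde{k}(G/e)$, the ring $\tilde{k}(G/e)$ has a nontrivial idempotent and hence is not a field, regardless of characteristic. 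I expect the main obstacle to be precisely this survival statement, that Nullstellensatzification in the clarified category does not collapse the two orthogonal idempotents into each other or into $0$ or $1$; once this is in hand, the existence claim of the corollary follows immediately.
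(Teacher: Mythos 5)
Your core strategy is exactly the paper's: Nullstellensatzify $\mathrm{FP}(L)$ to get a Nullstellensatzian clarified $\tilde{k}$, observe the structure map $\mathrm{FP}(L) \to \tilde{k}$ is injective because the source is field-like so that $G$ acts faithfully on $\tilde{k}(G/e)$, and then derive a contradiction from \cref{thm:NSS-clarified-field-like-are-boring} together with Artin--Schreier under the assumption that $\tilde{k}$ is field-like. The two ``issues'' you flag afterward, and the proposed fix, are where things go wrong.

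The first worry is not a gap at all. The paper notes immediately after the corollary that by \cref{lem:NSS-clarified-implies-MRC} every Nullstellensatzian clarified Tambara functor has injective restrictions, and Nakaoka's criterion then says such a Tambara functor is field-like if and only if its $G/e$ level is a field: the only other possibility allowed by Nakaoka, $\tilde{k}(G/e) \cong \mathrm{Coind}_H^G \F$ with $H$ proper, is ruled out because $\tilde{k}$ is clarified. So ``not field-like'' already implies ``$G/e$ level not a field.'' The second worry, about $\mathrm{char}(L) = p \mid |G|$, does correctly identify the characteristic hypothesis of \cref{thm:NSS-clarified-field-like-are-boring} as something the paper's proof quietly relies on; the right fix, however, is simply to choose $L/K$ more carefully rather than to change the starting object. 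The conclusion of the corollary mentions only $G$, not $L$, so one is free to realize $G$ as a Galois group over a field of characteristic coprime to $|G|$ (characteristic $0$ whenever $G \neq C_2$; any odd-characteristic quadratic extension when $G = C_2$), after which the theorem applies and Artin--Schreier finishes the job.

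Your proposed replacement of $\underline{L}$ by $\underline{L} \times \underline{L}$ is unsound, and the ``survival'' issue you flag is not merely an obstacle to be overcome but an impossibility. No Nullstellensatzian clarified Tambara functor $\tilde{k}$ can contain a nontrivial idempotent $d \in \tilde{k}(G/G)$: by \cref{prop:bottom-level-reflects-products} such a $d$ would split $\tilde{k} \cong \tilde{k}_1 \times \tilde{k}_2$ into two nonzero clarified factors, and then $\tilde{k}_1 = \tilde{k}/(1-d)$ is a nonzero finitely presented clarified $\tilde{k}$-algebra whose structure map $\tilde{k} \to \tilde{k}_1$ has nonzero kernel, so a $\tilde{k}$-algebra map $\tilde{k}_1 \to \tilde{k}$ (which would split it) cannot exist, contradicting Nullstellensatzness. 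Thus in any Nullstellensatzification of $\underline{L} \times \underline{L}$, the images of $(1,0)$ and $(0,1)$ are forced into $\{0,1\}$, and the map factors through one of the projections $\underline{L} \times \underline{L} \to \underline{L}$; you gain nothing over starting with $\underline{L}$ directly.
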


In light of this, there are two natural questions. First, \cref{conj:FP-C-is-NSS-clarified} asserts that there are field-like Nullstellensatzian clarified $G$-Tambara functors of every form not eliminated by \cref{thm:NSS-clarified-field-like-are-boring} and Artin-Schreier theory. Second, \cref{problem:what-do-they-look-like?} asks what the remaining Nullstellensatzian clarified $G$-Tambara functors look like. We are able to make slight progress along this second front.

\vspace{3mm}

\noindent \textbf{Contents.} The contents of this article are organized as follows. We begin in Section 2 with a review of the necessary background on Mackey, Green, and Tambara functors for the main results of the next two sections. In Section 3 we extract many useful consequences of the Tambara functor axioms, and prove our main theorem.

Section 4 concerns an analysis of morphisms between Tambara functors admitting a product decomposition as in \cref{thm:product-decomposition}. Additionally, we elaborate on some of the unusual behavior of coproducts in the category of clarified Tambara functors. Finally, Section 5 contains all of our applications. Beginning with an overview of applications appearing in other article, Section 5.1 studies modules and algebras over coinductions, and Section 5.2 studies Nullstellensatzian clarified Tambara functors.

\vspace{3mm}

\noindent \textbf{Acknowledgments.} The author thanks Mike Hill for a wealth of insightful suggestions, including a major rearranging of the content of the early versions of this article, and David Chan for many helpful conversations. Additionally, the author thanks David Mehrle, Natalie Stewart, Maxime Ramzi, and the wonderful participants of Northwestern's informal Number Theory seminar organized by Haochen Cheng for helpful suggestions. For providing comments and suggestions on early drafts, the author thanks Mike Hill, David Mehrle, Jackson Morris, Birgit Richter, and Yuri Sulyma. Finally, the author thanks Alley Koenig for suggesting the term ``clarified" as a replacement for ``pure".
%%%%%%%%%%%%%%%%%%%%%%%%%%%%%%%%%%%%%%%
\section{Review of equivariant algebra}\label{sec:review}
%%%%%%%%%%%%%%%%%%%%%%%%%%%%%%%%%%%%%%%

There are many wonderful introductions to the theory of Mackey functors, for example \cite{Web00} and \cite{Str12}. Originally formulated by Dress \cite{Dre71} and Green \cite{Gre71} in the context of algebra, their appearance in equivariant homotopy theory has motivated their study by topologists as well. We begin with a brief review of Mackey functors here. Let $G$ be a finite group.

\begin{definition}
	A Mackey functor is a product-preserving functor $M : \mathrm{Span}(G-\Set^{\mathrm{fin}}) \rightarrow \Set$ from the Burnside category to sets, for which each monoid $M(X)$ is actually a group. A morphism of Mackey functors is a natural transformation.
\end{definition}

Given a finite $G$-set $X$ and semi-Mackey functor $M$, we refer to $M(X)$ as the value of $M$ in level $X$. Given a map of $G$-sets $f : X \rightarrow Y$, there are two natural bispans one may form: $X \leftarrow X \rightarrow Y$ and $Y \leftarrow X \rightarrow X$. The image of the former morphism under $M$ is called \emph{transfer} and the image of the latter under $M$ is called \emph{restriction}.

\begin{example}
	The Burnside Mackey functor $\mathcal{A}_G$ is the group completion of the functor $X \mapsto \{ Y \rightarrow X \}$. Concretely, $\mathcal{A}_G(G/H)$ is the Grothendieck group completion of the set of finite $H$-sets under disjoint union.
\end{example}

\begin{example}
	Let $E$ be a genuine $G$-spectrum. Then, for each $n$, $G/H \mapsto \pi_n^G(G/H_+ \wedge E)$ determines a Mackey functor $\underline{\pi}_n(E)$. According to Peter May, this fact was observed and promoted by Gaunce Lewis.
\end{example}

The \emph{box product} $\boxtimes$ of two Mackey functors is given by the Day convolution product with respect to the product of finite $G$-sets and tensor product of abelian groups. The box product makes the category of Mackey functors into a symmetric monoidal category with unit the Burnside Mackey functor. The value of the box product at the bottom level, $G/e$, is particularly pleasant. Namely, we have $(M \boxtimes N)(G/e) \cong M(G/e) \otimes N(G/e)$, with the Weyl group $G \cong W_G e$ acting diagonally.

Green functors are an intermediate step between Mackey functors and Tambara functors, which capture some of the multiplicative structure present in many of our examples.

\begin{definition}
	A Green functor $R$ is a Mackey functor which is a commutative monoid with respect to the box product. A morphism of Green functors is a morphism of Mackey functors which respects the monoid structure. The category of $G$-Green functors is denoted by $G$-$\Green$.
\end{definition}

\begin{example}
	Since the Burnside Mackey functor is the unit with respect to the box product, it inherits a unique Green functor structure making it into the initial object in the category of Green functors. It is given in level $G/H$ by the standard ring structure on the Burnside ring (which is induced by the product of $H$-sets).
\end{example}

Additionally, one may define modules over a Green functor in terms of the box product.

\begin{definition}
	A module over a Green functor $R$ is a module over the monoid $R$ with respect to the box product. Morphisms of modules are morphisms of Mackey functors which respect the module structure.
\end{definition}

The category of $R$-modules enjoys a symmetric monoidal structure induced by the box product, with unit given by $R$.

\begin{definition}
	Let $M$ and $N$ be $R$-modules. Their box product over $R$ is defined by \[ M \boxtimes_R N := \mathrm{Coeq}( M \boxtimes R \boxtimes N \rightrightarrows M \boxtimes N ) \] where the coequalizer is taken over the respective action maps for $M$ and $N$.
\end{definition}

We recall more standard facts about Green functors and modules.

\begin{proposition}
	The category of Green functors and the category of modules over a Green functor are bicomplete. Limits and filtered colimits are computed levelwise.
\end{proposition}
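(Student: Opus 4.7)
The plan is to leverage the bicompleteness of $G$-$\Mack$, which is provided by the preceding proposition, together with the observation that the extra structure on a Green functor or module is defined levelwise by maps and equations that are well-behaved under limits and filtered colimits. In both cases, the strategy is to show that the forgetful functor to $G$-$\Mack$ creates limits and filtered colimits, and then to construct the remaining colimits separately.

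For limits and filtered colimits: given a diagram $D$ in $G$-$\Green$ or in the category of $R$-modules, take the limit or filtered colimit $L$ of the underlying diagram in $G$-$\Mack$, which exists and is computed levelwise. For each subgroup $H$, the abelian group $L(G/H)$ is a limit or filtered colimit of rings (respectively $R(G/H)$-modules), so it inherits a unique ring (respectively module) structure making the cone or cocone into ring (respectively module) maps. The universal property of limits and filtered colimits then furnishes restriction, transfer, and conjugation maps on $L$ that are automatically multiplicative and satisfy Frobenius reciprocity, since these are equational conditions already satisfied by each $D_i$. Hence $L$ inherits a canonical Green functor or $R$-module structure and is the desired limit or filtered colimit in the relevant category.

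For general colimits, the category of $R$-modules is the category of algebras for the monad $M \mapsto R \boxtimes M$ on $G$-$\Mack$; since this monad is a left adjoint to the forgetful functor, it preserves all colimits, so by standard monadicity arguments and the cocompleteness of $G$-$\Mack$ the category of $R$-modules is cocomplete, with general colimits obtainable as coequalizers of free resolutions. For Green functors, finite coproducts are given by the box product $R \boxtimes S$ equipped with its induced commutative monoid structure (the equivariant analogue of the tensor product of commutative rings), and coequalizers can be constructed as quotients by the ideal generated by the relevant differences; combining these finite colimits with the filtered colimits constructed above yields all small colimits. I expect the main technical subtlety to be verifying that Frobenius reciprocity survives these constructions, but this reduces to routine computations at each level $G/H$ using the corresponding statements for ordinary rings and modules.
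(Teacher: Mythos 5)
The paper states this proposition as a recalled standard fact without giving a proof, so there is no argument of the paper's own to compare against. Your proposal follows the standard approach and is essentially correct: limits and filtered colimits are created by the forgetful functor to Mackey functors (because the Green/module structure is equational and transports along such limits/colimits computed levelwise), and general colimits come from abstract nonsense about monoids and modules over a monoidal structure.

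One small imprecision worth flagging: you write that the monad $M \mapsto R \boxtimes M$ ``is a left adjoint to the forgetful functor,'' which conflates the free functor with the monad itself. The monad $T = R \boxtimes -$ on $G$-$\Mack$ is the composite $UF$ of the free functor $F$ (which is a left adjoint) and the forgetful functor $U$ (which is not), so $T$ being a left adjoint doesn't follow from that phrasing. The correct reason $T$ preserves colimits is that the box product on Mackey functors is part of a \emph{closed} symmetric monoidal structure: $R \boxtimes -$ admits an internal hom as right adjoint, hence preserves all colimits. With that fix, the monadicity argument for cocompleteness of $R$-modules is fine, and similarly for Green functors as commutative monoids in a cocomplete closed symmetric monoidal category.
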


\begin{proposition}
	The category of modules over a Green functor is abelian. Kernels and cokernels are computed levelwise.
\end{proposition}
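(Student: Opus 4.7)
The plan is to lift the abelian structure on $G$-$\Mack$ to the category of $R$-modules, leveraging that the forgetful functor $U : R\text{-}\Mod \to G\text{-}\Mack$ is faithful. First I would verify that $R\text{-}\Mod$ is additive: the zero Mackey functor carries a unique $R$-action and serves as the zero object; the levelwise product of two $R$-modules inherits a componentwise $R$-action and is the biproduct; and pointwise sums of $R$-linear maps are $R$-linear, so Hom-sets inherit abelian group structure from Mackey functors.

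Next I would construct kernels and cokernels levelwise. Given an $R$-linear $f : M \to N$, let $K$ be its kernel in $G$-$\Mack$, so $K(G/H) = \ker(f_{G/H})$. For $r \in R(G/H)$ and $m \in K(G/H)$, $R$-linearity of $f$ gives $f(rm) = r \cdot f(m) = 0$, so the $R$-action on $M$ restricts to $K$. The module axioms of \cref{prop:Dress-pairing-description-of-module} descend from $M$ to $K$, making $K$ an $R$-module whose inclusion into $M$ is $R$-linear. The universal property in $R\text{-}\Mod$ follows from the one in $G$-$\Mack$: any $R$-linear $g : L \to M$ with $fg = 0$ factors uniquely through $K$ in $G$-$\Mack$, and this factorization is automatically $R$-linear, because the equation $\phi(rx) = r\phi(x)$ is checked pointwise inside $M$, where it holds by $R$-linearity of $g$. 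The cokernel $C$ with $C(G/H) = \mathrm{coker}(f_{G/H})$ is constructed dually, using that the image of $f$ is closed under the $R$-action, so the $R$-action on $N$ descends to $C$.

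The main obstacle is verifying that every monic is a kernel and every epic is a cokernel in $R\text{-}\Mod$. If $\iota : M \to N$ is monic in $R\text{-}\Mod$, its kernel (which we have just constructed) is zero, so $U(\iota)$ has zero kernel in $G$-$\Mack$ and is therefore monic there. Since $G$-$\Mack$ is abelian, $U(\iota)$ equals the Mackey kernel of its cokernel $N \to C$. Lifting $C$ to $R\text{-}\Mod$ via the construction above, the induced $R$-linear comparison map $M \to \ker_{R}(N \to C)$ is a levelwise isomorphism, hence an $R$-module isomorphism, exhibiting $\iota$ as an $R$-module kernel. The argument for epics is dual. Together with additivity and the levelwise construction of kernels and cokernels, this verifies all of the abelian category axioms.
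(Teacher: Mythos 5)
Your proof is correct. The paper states this proposition as a standard background fact without supplying an argument, so there is no paper proof to compare against; your argument is the usual one of lifting the abelian structure along the faithful forgetful functor to Mackey functors, and all the steps (levelwise kernels/cokernels inheriting the $R$-action, the induced universal maps being automatically $R$-linear because $R$-linearity can be checked after postcomposing with a levelwise injection, and the monic/epic verification via the fact that a levelwise bijective $R$-module map has an $R$-linear inverse) are sound.

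For what it is worth, there is a slightly more conceptual packaging that avoids re-verifying the abelian axioms from scratch: the box product on $G$-Mackey functors is a closed symmetric monoidal structure, hence $- \boxtimes R$ is a cocontinuous endofunctor, so the free $R$-module monad preserves colimits. For such a monad on an abelian category the Eilenberg--Moore category is abelian with limits and colimits created by the forgetful functor, which gives the statement (including the levelwise description) in one stroke. Your concrete version is more self-contained and is what one would write out when first proving this; the monadic version buys generality (the same argument works for any commutative monoid in a suitably nice symmetric monoidal abelian category) at the cost of invoking more machinery.
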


Tambara functors are Green functors with \emph{norms}, multiplicative analogues of transfers, and were first introduced by Tambara in \cite{Tam93} under the name $TNR$-functors.

\begin{definition}
	A Tambara functor is a product-preserving functor from 
    \[ 
        R : \mathrm{bispan}(G-\Set^{\mathrm{fin}}) \rightarrow \Set 
    \] 
    such that the additive monoid $R(X)$ is actually a group for each finite $G$-set $X$.
\end{definition}

Recall that the transfer along the fold map $X \sqcup X \rightarrow X$ describes the addition on $R(X)$. Similarly, the norm along the fold map $X \sqcup X \rightarrow X$ describes the multiplicative structure on $R$. Before we may unpack this definition into concrete terms, we must take a short detour into the theory of finite $G$-sets.

Let $f : X \rightarrow Y$ be a map of $G$-sets. Pullback along $f$ defines a functor from the category of $G$-sets over $Y$ to the category of $G$-sets over $X$: $A \rightarrow Y$ is sent to $X \times_Y B \rightarrow X$. This functor has a right adjoint given by the \emph{dependent product}, it sends a $G$-set $A \rightarrow X$ over $X$ to the $G$-set $\Pi_f A \rightarrow Y$. 

It will be necessary later to posses a detailed understanding of the dependent product. As a set, it consists of the set of pairs $(y,\sigma)$ where $y \in Y$ and $\sigma : f^{-1}(y) \rightarrow A$ such that the composition $f^{-1}(y) \xrightarrow{\sigma} A \rightarrow X$ is the inclusion $f^{-1}(y) \subset X$. The action of $G$ is defined by $g(y,\sigma) = (gy,g\sigma)$ where $(g \sigma)(x) = g \sigma( g^{-1} x)$. The map $\Pi_f A \rightarrow Y$ is given by $(y,\sigma) \mapsto y$.

Note furthermore that there is a map $X \times_Y \Pi_f A \rightarrow A$ given by sending a pair $(x,(y,\sigma))$ to $\sigma(x)$. One may check that the diagram
\[ \begin{tikzcd}
	X \arrow[d] & A \arrow[l] & X \times_Y \Pi_f A \arrow[l] \arrow[d] \\
	Y & & \Pi_f A \arrow[ll]
\end{tikzcd} \] 
commutes. Any diagram of $G$-sets isomorphic to one of the above form is called an \emph{exponential diagram}.

\begin{proposition}
	A Tambara functor consists of a Green functor $R$ along with norm maps $\mathrm{Nm}_K^H : R(G/K) \rightarrow R(G/H)$ for any subgroup inclusion $K \subset H$ (which we extend in the unique product-preserving way to arbitrary morphisms of finite $G$-sets) which are multiplicative monoid morphisms, and which satisfy the following relations.
	\begin{enumerate}
		\item Norms intertwine conjugation: \[ \mathrm{Nm}_{gKg^{-1}}^H c_g = c_g \mathrm{Nm}_K^H \]
		\item Norms are functorial: \[ \mathrm{Nm}_K^K = \mathrm{Id}_K \quad \textrm{and} \quad \mathrm{Nm}_K^H \mathrm{Nm}_L^K = \mathrm{Nm}_L^H \]	
		\item Restrictions and norms satisfy the double coset formula:
		\[ \mathrm{Res}_L^H \mathrm{Nm}_K^H = \prod_{g \in L \backslash H / K} \mathrm{Nm}_{L \cap gKg^{-1}}^L c_g \mathrm{Res}_{gLg^{-1} \cap K}^K \]
		\item The exponential formula holds: if 
        \[ \begin{tikzcd}
            X \arrow[d] & A \arrow[l] & X \times_Y \Pi_f A \arrow[l] \arrow[d] \\ 
            Y & & \Pi_f A \arrow[ll] 
        \end{tikzcd} \] 
        is an exponential diagram, then we assume that
        \[ \begin{tikzcd}
            R(X) \arrow[d, "\mathrm{Nm}"] & R(A) \arrow[l, "\mathrm{Tr}"] \arrow[r, "\mathrm{Res}"] & R(X \times_Y \Pi_f A) \arrow[d, "\mathrm{Nm}"] \\
            R(Y) & & R(\Pi_f A) \arrow[ll, "\mathrm{Tr}"]
        \end{tikzcd} \] 
        commutes.
	\end{enumerate}
\end{proposition}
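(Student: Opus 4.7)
The plan is to unpack the definition by decomposing every bispan into three elementary pieces (a restriction, then a norm, then a transfer), and to show that the listed axioms are precisely the relations required for such a decomposition to assemble into a well-defined product-preserving functor on the full bispan category.

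First I would observe that bispans of the form $X \xleftarrow{f} A \xrightarrow{=} A \xrightarrow{=} A$ assemble into a copy of the opposite of the category of finite $G$-sets inside $\mathrm{bispan}(G\text{-}\Set^{\mathrm{fin}})$; bispans of the form $B \xleftarrow{=} B \xrightarrow{=} B \xrightarrow{h} Y$ assemble into a copy of $G\text{-}\Set^{\mathrm{fin}}$; and bispans of the form $A \xleftarrow{=} A \xrightarrow{g} B \xrightarrow{=} B$ assemble into another covariant copy of $G\text{-}\Set^{\mathrm{fin}}$. A product-preserving functor $R$ therefore gives, for each $K \subset H \subset G$, a restriction $\mathrm{Res}_K^H$, a transfer $\mathrm{Tr}_K^H$, and a norm $\mathrm{Nm}_K^H$. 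The first two, together with product-preservation, recover the Green functor structure: addition comes from transfer along fold maps and multiplication comes from norm along fold maps, and product-preservation forces $R(X \sqcup X) \cong R(X) \times R(X)$ so norm is multiplicative. The functoriality and conjugation axioms for norms are automatic from the corresponding functorialities in $\mathrm{bispan}(G\text{-}\Set^{\mathrm{fin}})$.

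Next I would check that every bispan $X \xleftarrow{f} A \xrightarrow{g} B \xrightarrow{h} Y$ factors in the bispan category as the restriction along $f$, then the norm along $g$, then the transfer along $h$, so that $R$'s value on any bispan is determined by these three families of maps. The listed compatibility axioms are then forced by composition relations in the bispan category: the double coset formula encodes $\mathrm{Res} \circ \mathrm{Tr}$ via the pullback square, and the exponential formula encodes $\mathrm{Nm} \circ \mathrm{Tr}$ via the exponential diagram associated to the dependent product $\Pi_f A$. Conversely, given a Green functor equipped with norm maps satisfying the four axioms, I would define a candidate functor on bispans by sending $X \xleftarrow{f} A \xrightarrow{g} B \xrightarrow{h} Y$ to $\mathrm{Tr}_h \circ \mathrm{Nm}_g \circ \mathrm{Res}_f$ and verify that this is well-defined on isomorphism classes and respects composition. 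Composition reduces to the assertion that one can rewrite any composite of restrictions, norms, and transfers in the canonical order by repeatedly pushing restrictions past norms and transfers (using the double coset formula and base change) and pushing transfers past norms (using the exponential formula), at which point functoriality of each family finishes the reduction.

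The main obstacle is the exponential formula. The other axioms are direct analogues of familiar Mackey functor identities, but the exponential formula is both subtler to state and subtler to verify: one must confirm that the particular exponential diagram produced by $\Pi_f$ really does compute the composition $\mathrm{Nm} \circ \mathrm{Tr}$ in the bispan category, and then that, together with the double coset formula, it suffices to reduce arbitrary iterated compositions of the three elementary operations to the canonical restrict-norm-transfer form without ambiguity. This boils down to a careful verification at the level of $G$-sets that the dependent product interacts with pullbacks precisely in the way dictated by bispan composition, which is the content of Tambara's original treatment in \cite{Tam93}.
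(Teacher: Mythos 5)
The paper states this proposition without proof, as part of its review of standard background; it is Tambara's original unpacking of the bispan definition from \cite{Tam93}, and your sketch follows that same canonical argument, so there is no substantively different route in the paper to compare against. The overall plan is sound: identify the three elementary classes of bispans, observe that every bispan factors canonically as $\mathrm{Tr} \circ \mathrm{Nm} \circ \mathrm{Res}$, and show that the listed axioms are exactly the rewriting rules needed to reduce an arbitrary composite to canonical form and to check that this is well-defined and functorial.

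One imprecision worth correcting: you say that ``the double coset formula encodes $\mathrm{Res} \circ \mathrm{Tr}$ via the pullback square.'' In the context of this proposition, the $\mathrm{Res} \circ \mathrm{Tr}$ double coset formula is already part of the underlying Green functor structure (indeed, of the underlying Mackey functor). The double coset formula being added as a new axiom here is the \emph{multiplicative} one for pushing a restriction past a \emph{norm}, i.e.\ for rewriting $\mathrm{Res} \circ \mathrm{Nm}$; it arises from composing a bispan of the form $\ast \leftarrow A \rightarrow B = B$ with one of the form $\ast = \ast \rightarrow \ast \leftarrow \ast$, via the pullback of the norm leg. You do implicitly acknowledge both cases later (``pushing restrictions past norms and transfers\ldots''), so this reads as a slip rather than a conceptual gap, but the phrasing should be fixed, since the point of the proposition is to isolate what is new beyond the Green functor structure. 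Similarly, it would strengthen the sketch to note explicitly that $\mathrm{Nm} \circ \mathrm{Res}$ and $\mathrm{Tr} \circ \mathrm{Nm}$ need no rewriting rule at all, since those composites are already presented as single bispans in canonical order; the only genuinely nontrivial reorderings are $\mathrm{Res} \circ \mathrm{Nm}$ (multiplicative double coset) and $\mathrm{Nm} \circ \mathrm{Tr}$ (exponential formula), which is why exactly these two appear as axioms.
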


\begin{example}
	If $G = C_p$ is the cyclic group of order $p$, we may describe a $C_p$-Tambara functor $R$ by a \emph{Lewis diagram}:
    \[ \begin{tikzcd}
        R(C_p/C_p) \arrow[d, "\mathrm{Res}"] \\
        R(C_p/e) \arrow[u, bend left = 45, "\mathrm{Tr}"] \arrow[u, bend right = 55, swap, "\mathrm{Nm}"] \arrow[loop below, "C_p"]
    \end{tikzcd} \]
Omitting some of the arrows allows us to give a similar presentation of a $C_p$-Green or $C_p$-Mackey functor.
\end{example}

In a Lewis diagram, the bottom of the diagram is the $G/e$ level, whereas the top of the diagram is the $G/G$ level. In general one thinks of the levels $G/H$ with $H$ ``small" as being the bottom levels of a Mackey functor, and the levels $G/H$ with $H$ ``large" as being the top levels.

\begin{example}\label{ex:Burnside-Tamb-functor}
	Coinduction of an $H$-set to a $G$-set defines norm maps which make the Burnside Green functor into a Tambara functor. For $G = C_p$ the Burnside Tambara functor has the following Lewis diagram:
    \[ \begin{tikzcd}
        \Z[x]/(x^2-px) \arrow[d, "x \mapsto p"] \\
        \Z \arrow[u, bend left = 70, "1 \mapsto x"] \arrow[u, bend right = 70, swap, "a \mapsto a+\frac{a^p-a}{p}x"] \arrow[loop below, "C_p-\mathrm{trivial}"]
    \end{tikzcd} \]
\end{example}

\begin{example}\label{ex:G-E_oo-gives-Tamb-functor}
	Let $E$ be a $G$-$\mathbb{E}_\infty$ ring spectrum. Then $\underline{\pi}_0(E)$ inherits the structure of a $G$-Tambara functor \cite{Bru07}.
\end{example}

\begin{example}\label{ex:representation-Tamb-functor}
	The complex representation Tambara functor $\mathrm{RU}_G$ inherits the structure of a Tambara functor, where the norms are given by the induction of an $H$-representation to a $G$-representation. Equivalently, the norms arise from the $G$-$\mathbb{E}_\infty$ ring structure on $KU_G$. More generally for any ring $R$ the assignment $G/H \mapsto K_0(R[H])$, the Grothendieck group completion of the category of $H$-representations over $R$, forms a Tambara functor.
\end{example}

\begin{example}
	Let $G$ act on a ring $R$. Then the fixed-point Green functor $\FP (R)$ admits a unique Tambara functor structure, with norms determined by the double coset formula. This construction is right adjoint to the functor sending a Green or Tambara functor $R$ to the $G$-ring $R(G/e)$. In fact, the same is true for Mackey functors: $M \mapsto M(G/e)$ is left adjoint to the fixed-point Mackey functor construction.
\end{example}

\begin{definition}[\cite{BH18}]\label{def:free-poly-algs}
	The free polynomial $G$-Tambara functor on a $G$-set $X$ is the representable Tambara functor \[ \mathrm{bispan}(G-\Set^{\mathrm{fin}})(X,-) . \] If $R$ is a Tambara functor, the free polynomial $R$-algebra on $X$ is \[ R \boxtimes \mathrm{bispan}(G-\Set^{\mathrm{fin}})(,-) . \]
\end{definition}

If $X = G/H$, we write $R[x_{G/H}]$ for the free polynomial $R$-algebra on $X$. By Yoneda's lemma, a map out of the free polynomial $R$-algebra on $X$ is the same data as a choice of element in level $X$ of the codomain. In particular, if $X = \sqcup G/H_i$, then the free polynomial $R$-algebra on $X$ is $\boxtimes_i R[x_{G/H_i}]$ where the box product is over $R$. We will not require these gadgets, except as examples, until \cref{subsec:NSS}.

There are well-known change-of-group functors relating $G$-Mackey, $G$-Green, and $G$-Tambara functors for varying $G$, arising from change-of-group functors relating categories of $G$-sets for varying $G$. Namely, if $H \subset G$, then the forgetful functor $\mathrm{res}_H^G$ from $H$-sets to $G$-sets has a left adjoint $\mathrm{ind}_H^G$. It sends an $H$-set $X$ to $G \times_H X$.

\begin{definition}
	If $T$ is an $G$-Mackey, $G$-Green, or $G$-Tambara functor, then the restriction $\mathrm{Res}_H^G T$ is defined by 
    \[ 
        \mathrm{Res}_H^G T := T \circ \mathrm{ind}_H^G .
    \]
\end{definition}

The induction of the $H$-set $H/K$ is isomorphic to the $G$-set $G/K$. Therefore the restriction may be thought of as extracting the bottom levels of a Mackey, Green, or Tambara functor.

\begin{definition}
	Let $T$ be an $H$-Mackey, $H$-Green, or $H$-Tambara functor. The coinduction $\mathrm{Coind}_H^G T$ is defined by \[ \mathrm{Coind}_H^G T := T \circ \mathrm{res}_H^G . \]
\end{definition}

Being coinduced from $H \subset G$ is a very special case of the equivalent conditions of \cite[Corollary 4.5]{Lew80}. We collect some well-known properties of restriction and coinduction.

\begin{proposition}
	Coinduction (of Mackey, Green, or Tambara functors) is right adjoint to restriction.
\end{proposition}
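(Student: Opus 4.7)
The plan is to lift the adjunction $\mathrm{ind}_H^G \dashv \mathrm{res}_H^G$ between finite $G$-sets and $H$-sets to an adjunction between the corresponding Burnside and bispan categories, and then transport it to the functor categories by a precomposition argument.

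First I would check that $\mathrm{ind}_H^G$ and $\mathrm{res}_H^G$ extend to functors $i$ and $r$ between the spans categories $\mathrm{Span}(H-\Set^{\mathrm{fin}}) \rightleftarrows \mathrm{Span}(G-\Set^{\mathrm{fin}})$, and similarly between the corresponding bispan categories. For spans this only requires preservation of pullbacks, which is automatic for adjoint functors. For bispans one additionally needs preservation of the dependent products used in bispan composition; this is the main technical point, and follows from standard Beck--Chevalley compatibilities between induction, restriction, and dependent products in slices of finite $G$-sets.

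Next I would establish a natural bijection
\[ \mathrm{Span}(H-\Set^{\mathrm{fin}})(rY, X) \cong \mathrm{Span}(G-\Set^{\mathrm{fin}})(Y, iX) \]
witnessing $r \dashv i$, and analogously for bispans. Given an $H$-span $rY \leftarrow A \rightarrow X$, applying $i$ to $A \rightarrow X$ produces a $G$-map $iA \to iX$, while the $G$-set adjunction $\mathrm{ind}_H^G \dashv \mathrm{res}_H^G$ converts $A \to rY$ into a $G$-map $iA \to Y$, yielding a $G$-span $Y \leftarrow iA \rightarrow iX$. The inverse assigns to a $G$-span $Y \leftarrow B \rightarrow iX$ the pullback of $B$ along the canonical $H$-equivariant inclusion $X \hookrightarrow iX$, which is equipped with a residual $H$-map to $rY$. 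The bispan version carries the same constructions through the extra (norm) leg.

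Finally, the standard categorical fact that precomposition with a right adjoint is left adjoint to precomposition with the corresponding left adjoint converts $r \dashv i$ into an adjunction of precomposition functors $(-)\circ i \dashv (-)\circ r$, which by definition is $\mathrm{Res}_H^G \dashv \mathrm{Coind}_H^G$ on semi-Mackey, Mackey, and Tambara functors. For Green functors I would additionally observe that $\mathrm{Res}_H^G$ is strong symmetric monoidal with respect to the box product (since $i$ preserves the coproducts entering the Day convolution), so its right adjoint $\mathrm{Coind}_H^G$ is lax symmetric monoidal and therefore preserves commutative monoid structures. The main obstacle I anticipate is verifying the bispan version of the adjunction $r \dashv i$ in sufficient detail, as the composition law for bispans is substantially more intricate than that for spans; once this is in hand, the rest of the argument is formal.
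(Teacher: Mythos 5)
The paper does not prove this proposition; it is stated as one of several ``well-known properties of restriction and coinduction'' and cited as background. There is therefore no proof in the text to compare against, so I will assess your argument on its own merits.

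Your approach is correct, and it is the standard one. The key structural insight --- that the adjunction $\mathrm{ind}_H^G \dashv \mathrm{res}_H^G$ on finite $G$-sets ``flips'' to an adjunction $r \dashv i$ between the induced span and bispan functors, after which $\mathrm{Res}_H^G = (- \circ i) \dashv (- \circ r) = \mathrm{Coind}_H^G$ follows formally from the general fact that if $L \dashv R$ then $R^* \dashv L^*$ on functor categories --- is exactly right, and your explicit unit/counit (sending $rY \leftarrow A \to X$ to $Y \leftarrow iA \to iX$ via the $G$-set adjunction on the left leg, with inverse given by pulling back along the unit $\eta_X \colon X \hookrightarrow r i X$) is the correct witness. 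You have also correctly identified the only genuinely delicate point: that $\mathrm{ind}_H^G$ and $\mathrm{res}_H^G$ must preserve pullbacks and dependent products to induce bispan functors at all. This holds: under the equivalence $H\text{-}\Set^{\mathrm{fin}} \simeq G\text{-}\Set^{\mathrm{fin}}/(G/H)$, induction is the dependent sum $\Sigma_{G/H}$ (a slice forgetful functor, which computes dependent products and pullbacks as in the base), and restriction is the pullback $(G/H)^*$, which preserves dependent products by Beck--Chevalley in the locally cartesian closed category of finite $G$-sets. Two small glosses are worth flagging, though neither is a gap in the strategy. First, one must check the bijection is natural in both variables with respect to span composition, and that it descends to equivalence classes of spans; this is routine but not automatic. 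Second, for Green functors the phrase ``preserves commutative monoid structures'' undersells what is needed: one invokes the doctrinal-adjunction statement that a strong monoidal left adjoint induces an adjunction on categories of commutative monoids, not merely that the right adjoint sends monoids to monoids.
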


\begin{proposition}
	Coinduction of Mackey functors is also left adjoint to restriction.
\end{proposition}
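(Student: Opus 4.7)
The plan is to leverage the previous proposition, which gives $\mathrm{Res}_H^G \dashv \mathrm{Coind}_H^G$, and to further identify $\mathrm{Coind}_H^G$ with the left adjoint of $\mathrm{Res}_H^G$. Since $\mathrm{Res}_H^G$ between categories of Mackey functors preserves all limits and colimits (they are computed levelwise by the earlier propositions), and these categories are locally presentable, the adjoint functor theorem guarantees that $\mathrm{Res}_H^G$ admits a left adjoint, which I will temporarily denote $\mathrm{Ind}_H^G$. The goal is therefore to produce a natural isomorphism $\mathrm{Ind}_H^G \cong \mathrm{Coind}_H^G$.

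First, I would evaluate both functors levelwise at each transitive $G$-set $G/K$ using the double coset decomposition
\[
\mathrm{res}_H^G(G/K) \;\cong\; \bigsqcup_{[g] \in H \backslash G / K} H/(H \cap gKg^{-1})
\]
as $H$-sets. By the definition of $\mathrm{Coind}_H^G$ as precomposition with $\mathrm{res}_H^G$ together with product-preservation of $T$, this yields
\[
\mathrm{Coind}_H^G T(G/K) \;\cong\; \prod_{[g] \in H \backslash G / K} T(H/(H \cap gKg^{-1})).
\]
On the other hand, the pointwise Kan extension formula describing $\mathrm{Ind}_H^G = \mathrm{Lan}_{\mathrm{ind}_H^G}$ gives
\[
\mathrm{Ind}_H^G T(G/K) \;\cong\; \bigoplus_{[g] \in H \backslash G / K} T(H/(H \cap gKg^{-1})).
\]
Since $G$ is finite the indexing double coset space is finite, and finite products and finite direct sums of abelian groups coincide canonically, producing a levelwise isomorphism between the two expressions above.

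The main obstacle is to verify that this identification is natural with respect to all Mackey functor structure maps (restrictions, transfers, and conjugations), so that it assembles into an isomorphism of Mackey functors. This can be checked directly by unwinding how restrictions and transfers act on the summands in the double coset decomposition and applying the double coset formula. Alternatively, one may avoid the comparison with $\mathrm{Ind}_H^G$ altogether and produce the adjunction $\mathrm{Coind}_H^G \dashv \mathrm{Res}_H^G$ directly by specifying unit and counit: the unit $T \to \mathrm{Res}_H^G \mathrm{Coind}_H^G T$ at an $H$-set $X$ is the transfer along the canonical inclusion $X \hookrightarrow \mathrm{res}_H^G(G \times_H X)$, $x \mapsto [e,x]$, and the counit $\mathrm{Coind}_H^G \mathrm{Res}_H^G S \to S$ at a $G$-set $Y$ is the transfer along the action map $G \times_H Y \to Y$, $[g,y] \mapsto gy$. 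The availability of these transfers is precisely what distinguishes Mackey functors from semi-Mackey functors, and the triangle identities reduce to a routine computation using the double coset formula.
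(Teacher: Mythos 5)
Your proof is correct; the paper states this proposition without proof, as a well-known fact, so there is no argument to compare against. Of your two routes, the unit/counit approach is the cleaner one and the maps you write down are the right ones: they are transfers along the unit $X \hookrightarrow \mathrm{res}_H^G \mathrm{ind}_H^G X$ and counit $G \times_H Y \to Y$ of the underlying $\mathrm{ind}_H^G \dashv \mathrm{res}_H^G$ adjunction of finite $G$-sets. These transfers assemble into Mackey functor morphisms because the relevant naturality squares are pullbacks of $G$-sets (so restrictions and transfers commute past them in the span category), and the triangle identities for your adjunction are inherited directly from the triangle identities of the underlying $G$-set adjunction rather than requiring the double coset formula.

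One misstatement worth fixing: the remark that ``the availability of these transfers is precisely what distinguishes Mackey functors from semi-Mackey functors'' is false. By the paper's own definition, a semi-Mackey functor is a product-preserving functor on the span category and therefore already has transfers; what distinguishes a Mackey functor is that its values are abelian groups rather than commutative monoids. The structural fact actually being used in your argument is that the span category (unlike, say, the orbit category) contains transfers at all, together with the (semi-)additivity of the target, which is what makes finite products and coproducts coincide and hence forces $\mathrm{Ind}_H^G \cong \mathrm{Coind}_H^G$ in your first approach. This is a side remark and does not invalidate anything, but as written it could mislead a reader.
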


There is another functor dual to coinduction, called induction. For Mackey functors, coinduction and induction are naturally isomorphic. We will not require induction of Green or Tambara functors, and so will not consider induction further.

\begin{lemma}\label{lem:coind-is-lax-monoidal}
	The functor 
    \[ 
        \mathrm{Coind}_H^G : H {-} \Mack \rightarrow G {-} \Mack 
    \] 
    is lax symmetric monoidal and oplax symmetric monoidal.
\end{lemma}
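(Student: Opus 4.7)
The plan is to derive both monoidality structures by combining two adjunction facts with the strong symmetric monoidality of $\mathrm{Res}_H^G$ on Mackey functors.

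First I would verify that $\mathrm{Res}_H^G : G{-}\Mack \to H{-}\Mack$ is strong symmetric monoidal with respect to the box product. Using the Dress pairing description in \cref{prop:Dress-pairing-description-of-box-product}, a Dress pairing out of $\mathrm{Res}_H^G M \boxtimes \mathrm{Res}_H^G N$ into an $H$-Mackey functor $K$ is precisely a coherent family of bilinear maps $M(G/L) \otimes N(G/L) \to K(H/L)$ indexed by subgroups $L \subset H$, which is exactly the restriction to subgroups of $H$ of a Dress pairing out of $M \boxtimes N$. By universality this identifies $\mathrm{Res}_H^G M \boxtimes \mathrm{Res}_H^G N$ with $\mathrm{Res}_H^G(M \boxtimes N)$ functorially in $M$ and $N$. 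Alternatively, one may argue via Day convolution: the functor $\mathrm{ind}_H^G$ preserves disjoint unions and pullbacks, hence extends to a strong symmetric monoidal functor of Burnside categories, and restriction of Mackey functors is precomposition along this functor, which is strong symmetric monoidal on additive presheaves.

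Next I would invoke the standard principle of doctrinal adjunction: if $F \dashv G$ is an adjunction of symmetric monoidal categories and $F$ is strong symmetric monoidal, then $G$ inherits a canonical lax symmetric monoidal structure whose coherence maps $GA \boxtimes GB \to G(A \boxtimes B)$ arise as the mates of the composite $F(GA \boxtimes GB) \cong FGA \boxtimes FGB \xrightarrow{\epsilon \boxtimes \epsilon} A \boxtimes B$. Applied to the adjunction $\mathrm{Res}_H^G \dashv \mathrm{Coind}_H^G$, this produces the desired lax symmetric monoidal structure on $\mathrm{Coind}_H^G$.

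For oplax monoidality I would apply the dual principle: for $F \dashv G$ with $G$ strong symmetric monoidal, $F$ inherits a canonical oplax symmetric monoidal structure. For Mackey functors, coinduction is also left adjoint to restriction (the feature that distinguishes the Mackey setting from the Green or Tambara settings), so applying the dual principle to $\mathrm{Coind}_H^G \dashv \mathrm{Res}_H^G$ with $\mathrm{Res}_H^G$ strong monoidal yields the oplax symmetric monoidal structure on $\mathrm{Coind}_H^G$. The only real work is verifying strong monoidality of $\mathrm{Res}_H^G$; both structure enrichments of $\mathrm{Coind}_H^G$ then follow formally from Kelly-style adjoint calculus.
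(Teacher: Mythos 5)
Your proposal is correct and follows essentially the same route as the paper: reduce to strong monoidality of $\mathrm{Res}_H^G$ and then invoke the fact that $\mathrm{Coind}_H^G$ is both a left and a right adjoint to $\mathrm{Res}_H^G$, so doctrinal adjunction gives both the lax and the oplax structures. The only difference is that you sketch a direct verification of strong monoidality of $\mathrm{Res}_H^G$ (via Dress pairings or Day convolution), whereas the paper simply cites this as a known result.
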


\begin{proof}
	By, for example, \cite[Lemma 6.6]{Cha24}, $\mathrm{Res}_H^G$ is strong symmetric monoidal. Now $\mathrm{Coind}_H^G$ is both left and right adjoint to $\mathrm{Res}_H^G$, hence is both lax symmetric monoidal and oplax symmetric monoidal.
\end{proof}

These constructions have shadows in the category of $G$-rings.

\begin{definition}
	Let $R$ be a ring with $H$-action. Define the coinduction of $R$ as the $G$-ring 
    \[ 
        \mathrm{Coind}_H^G R := \mathrm{Coind}_H^G \mathrm{FP}(R)(G/e) \cong \mathrm{Fun}(G/H,R) . 
    \]
\end{definition}

Note that $\mathrm{Fun}(G/H,R)$ is a ring via pointwise multiplication, and the $G$-action is given as follows. Fix a choice of representatives $g \in G$ for each coset in $G/H$. Let $\gamma \in G$ and $g_1 \in G$ be the chosen representative for the coset $g_1 H$, and let $g_2$ be the chosen representative of $\gamma g_1 H$, so $g_2^{-1} \gamma g_1 \in H$. Then \[ (\gamma \cdot f)(g_1 H) = g_2^{-1} \gamma g_1 \cdot (f(\gamma g_1 H)) \] defines the $G$-action on $\mathrm{Coind}_H^G R$.

\begin{proposition}
	Coinduction of $H$-rings is right adjoint to restriction $\mathrm{Res}_H^G$ from $G$-rings to $H$-rings.
\end{proposition}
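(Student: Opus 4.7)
The plan is to exhibit a natural bijection between $H$-equivariant ring maps $\mathrm{Res}_H^G S \to R$ and $G$-equivariant ring maps $S \to \mathrm{Coind}_H^G R$. First I would reformulate the coinduction in the more symmetric fashion $\mathrm{Coind}_H^G R \cong \mathrm{Fun}_H(G,R)$, the ring of set maps $f : G \to R$ satisfying $f(gh) = h^{-1} f(g)$, with pointwise addition and multiplication and with $G$-action by left translation $(\gamma \cdot f)(g) = f(\gamma^{-1} g)$. This is isomorphic to $\mathrm{Fun}(G/H, R)$ via any choice of coset representatives, and the twisted pointwise $G$-action on $\mathrm{Fun}(G/H,R)$ given in the statement is exactly the one induced by this identification. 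From this formulation, it is manifest that $\mathrm{Coind}_H^G R$ is a $G$-ring, and that $\mathrm{Coind}_H^G$ is functorial in $R$.

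Next, given an $H$-equivariant ring map $\phi : \mathrm{Res}_H^G S \to R$, I would define $\tilde\phi : S \to \mathrm{Fun}_H(G,R)$ by $\tilde\phi(s)(g) := \phi(g^{-1} s)$. The equivariance condition $\tilde\phi(s)(gh) = h^{-1}\tilde\phi(s)(g)$ follows from $H$-equivariance of $\phi$; the map $\tilde\phi$ is $G$-equivariant since $\tilde\phi(\gamma s)(g) = \phi(g^{-1}\gamma s) = \tilde\phi(s)(\gamma^{-1} g) = (\gamma \cdot \tilde\phi(s))(g)$; and $\tilde\phi$ is a ring map because each $g^{-1} \cdot (-) : S \to S$ is a ring automorphism, so $\phi(g^{-1} s_1 \cdot g^{-1} s_2) = \phi(g^{-1} s_1)\phi(g^{-1} s_2)$, which is pointwise multiplication in $\mathrm{Fun}_H(G,R)$. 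In the reverse direction, given a $G$-equivariant ring map $\psi : S \to \mathrm{Fun}_H(G,R)$, I would set $\bar\psi(s) := \psi(s)(e)$. This is visibly a ring map, and $H$-equivariance uses both the $G$-action on $\psi$ and the right $H$-equivariance of values: $\bar\psi(hs) = \psi(hs)(e) = \psi(s)(h^{-1}) = h \cdot \psi(s)(e) = h \cdot \bar\psi(s)$.

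Finally I would check that these assignments are mutually inverse (one composite gives $\tilde\phi(s)(e) = \phi(s)$, and the other expands as $\bar\psi(g^{-1} s) = \psi(g^{-1} s)(e) = (\gamma^{-1}\cdot \psi(s))(e)|_{\gamma = g} = \psi(s)(g)$) and natural in $S$ and $R$, which is immediate from the explicit formulas. The only real obstacle is bookkeeping with the left/right action conventions and the identification between $\mathrm{Fun}(G/H,R)$ and $\mathrm{Fun}_H(G,R)$; no substantive calculation is required. Alternatively, one can deduce this as a formal consequence of the Green functor adjunction $\mathrm{Res}_H^G \dashv \mathrm{Coind}_H^G$ combined with the adjunction between $G/e$-evaluation and $\mathrm{FP}$, noting that $\mathrm{Res}_H^G \mathrm{FP}(S) \cong \mathrm{FP}(\mathrm{Res}_H^G S)$ and $(\mathrm{Coind}_H^G \mathrm{FP}(R))(G/e) \cong \mathrm{Coind}_H^G R$, but the direct bijection is quicker.
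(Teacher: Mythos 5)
The paper states this proposition without proof, treating it as standard, so there is no argument to compare against. Your direct verification is correct: the model $\mathrm{Coind}_H^G R \cong \mathrm{Fun}_H(G,R)$ with right $H$-equivariance $f(gh)=h^{-1}f(g)$ and left-translation $G$-action is the usual one, all the equivariance and ring-map checks for $\phi \mapsto \tilde\phi$ and $\psi \mapsto \bar\psi$ go through as you compute them, and the two assignments are visibly mutually inverse and natural.

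One small caveat on the alternative argument you sketch at the end: to run it, you need not just the $G$-ring identification $(\mathrm{Coind}_H^G \mathrm{FP}(R))(G/e) \cong \mathrm{Coind}_H^G R$ but the stronger fact that $\mathrm{Coind}_H^G \mathrm{FP}(R) \cong \mathrm{FP}(\mathrm{Coind}_H^G R)$ as Green (or Tambara) functors, i.e.\ that coinduction preserves the essential image of $\mathrm{FP}$. Without this, the final step from $\mathrm{Hom}_{G\text{-}\Green}(\mathrm{FP}(S), \mathrm{Coind}_H^G\mathrm{FP}(R))$ to $\mathrm{Hom}_{G\text{-}\Ring}(S, \mathrm{Coind}_H^G R)$ via the $(-)(G/e)\dashv\mathrm{FP}$ adjunction does not apply. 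This is true and not hard, but it is an extra ingredient. Since your direct bijection is complete on its own, this does not affect the correctness of the proof.
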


We emphasize that coinduction and restriction of rings with group action form the underlying bottom level description of coinduction and restriction respectively of Tambara functors.

\begin{notation}
	If $H \subset G$ and $R$ is an $H$-Ring, Mackey, Green, or Tambara functor, then let ${}^g H := gHg^{-1}$ and ${}^g R$ be the ${}^g H$-Ring, Mackey, Green, or Tambara functor given by restricting along the conjugation isomorphism ${}^g H \cong H$.
\end{notation}

\begin{lemma}\label{lem:arbitrary-res-of-coind-of-Gring}
	Let $H$ and $K$ be subgroups of $G$ and let $R$ be an $H$-ring. Then we have an isomorphism 
\[ 
	\mathrm{Res}_K^G \mathrm{Coind}_H^G R \cong \prod_{g \in K \backslash G / H} \mathrm{Coind}_{K \cap {}^g H}^K \mathrm{Res}_{K \cap {}^g H}^{{}^g H} {}^g R 
\] 
of $K$-rings.
\end{lemma}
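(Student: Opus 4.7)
The plan is to reduce this to the standard double-coset decomposition of $G/H$ as a $K$-set, and then chase the formulas defining the $K$-action on $\mathrm{Fun}(G/H,R)$ carefully on each orbit.

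First, I would recall the orbit decomposition $G/H = \bigsqcup_{g \in K\backslash G/H} K \cdot gH$ as $K$-sets, where the stabilizer of $gH$ under the $K$-action is precisely $K \cap gHg^{-1} = K \cap {}^gH$. Choosing the double-coset representative $g$ in each orbit, the map $k(K \cap {}^gH) \mapsto kgH$ is a well-defined $K$-set isomorphism $K/(K \cap {}^gH) \xrightarrow{\sim} KgH/H$. Since the functor $\mathrm{Fun}(-,R)$ sends disjoint unions to products, this yields a $K$-equivariant isomorphism of rings
\[
    \mathrm{Res}_K^G \mathrm{Coind}_H^G R \;=\; \mathrm{Fun}(G/H,R) \;\cong\; \prod_{g \in K\backslash G/H} \mathrm{Fun}\bigl(K/(K \cap {}^gH),\, R\bigr),
\]
where on the right $R$ is being acted on through whichever group factor stabilizes each coset.

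Next, for a fixed double coset representative $g$, I would exhibit an explicit isomorphism
\[
    \Phi_g : \mathrm{Coind}_{K \cap {}^gH}^{K} \mathrm{Res}_{K \cap {}^gH}^{{}^gH} {}^g R \;\xrightarrow{\sim}\; \mathrm{Fun}\bigl(K/(K \cap {}^gH),\, R\bigr)
\]
by $\Phi_g(f)(kgH) = k \cdot f\bigl(k(K\cap {}^gH)\bigr)$ after using the conjugation isomorphism ${}^gR \xrightarrow{\sim} R$, $r \mapsto g \cdot r$, to turn the target of $f$ into $R$ with its native $H$-action. The key computation is that the two candidate $(K \cap {}^gH)$-actions on $R$ — namely the restriction of the ${}^gH$-action on ${}^gR$ (which by definition goes through the conjugation-by-$g$ isomorphism) and the one coming from identifying $K \cap {}^gH$ with the isotropy of $gH$ and using the $H$-action on $R$ — agree precisely by construction; this makes $\Phi_g$ well-defined on equivalence classes of cosets and straightforwardly a ring map.

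The main verification, and the step I expect to be the only real obstacle, is checking that $\Phi_g$ is $K$-equivariant when the target is given the $K$-action inherited from the $G$-action on $\mathrm{Fun}(G/H,R)$ as spelled out in the excerpt. This amounts to the bookkeeping: given $\gamma \in K$ and a canonical representative $k_1$ of a coset $k_1 H$ (so $k_2 = $ canonical rep of $\gamma k_1 H$, with $k_2^{-1}\gamma k_1 \in K \cap {}^gH$ after twisting by $g$), one expands both sides of $\Phi_g(\gamma \cdot f)(k_1 gH) = (\gamma \cdot \Phi_g(f))(k_1 gH)$ using the formula $(\gamma \cdot F)(g_1 H) = g_2^{-1}\gamma g_1 \cdot F(\gamma g_1 H)$, substitutes $g_1 = k_1 g$, $g_2 = k_2 g$, and observes that the ``correction'' element $g_2^{-1}\gamma g_1 = g^{-1}(k_2^{-1}\gamma k_1)g \in H$ is precisely the conjugate of the $K\cap {}^gH$-element acting in the source. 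Once this equivariance is checked, taking the product over $g \in K \backslash G/H$ of the $\Phi_g$ and composing with the orbit-decomposition isomorphism gives the desired $K$-ring isomorphism.
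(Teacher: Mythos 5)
Your approach is genuinely different from the paper's: the paper proves \cref{lem:arbitrary-res-of-coind-of-Gring} by first stating the analogous result for $H$-Tambara (and Green, Mackey) functors in \cref{lem:arbitrary-res-of-coind-of-HTamb}, citing Proposition 5.3 of \cite{TW95} for that, and then deducing the $G$-ring statement ``by taking fixed-point Tambara functors'' and evaluating at the bottom level. You instead give a direct, hands-on proof at the level of $K$-sets via the double-coset orbit decomposition of $G/H$. That strategy is the standard, correct one, and it does avoid the detour through Mackey/Tambara functor formalism.

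However, there is a concrete problem in the heart of your argument. The ``conjugation isomorphism ${}^gR \xrightarrow{\sim} R$, $r \mapsto g\cdot r$'' does not exist here: $R$ is only an $H$-ring and ${}^gR$ is only a ${}^gH$-ring, and $g$ (which is generically in neither $H$ nor ${}^gH$) acts on neither. Likewise the extra factor $k\cdot$ in your formula $\Phi_g(f)(kgH) = k\cdot f(k(K\cap {}^gH))$ is ill-defined, since $k \in K$ does not act on $R$. You seem to be importing notation from the case of a ring with a full $G$-action. The fix is that no twist is needed in the formula at all: choosing compatible coset representatives (i.e.\ $kg$ as the representative of $kgH \in G/H$ whenever $k$ is the chosen representative of $k(K\cap{}^gH) \in K/(K\cap{}^gH)$), the map $\Phi_g(f)(kgH) := f(k(K\cap{}^gH))$, using the tautological identification of underlying rings ${}^gR = R$, is already $K$-equivariant. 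The twist emerges on its own: unwinding the coinduction action on both sides produces on the left the correction term $(k_2^{-1}\gamma k_1)\cdot_{{}^gR}(-)$, which by definition of ${}^gR$ equals $(g^{-1}k_2^{-1}\gamma k_1 g)\cdot_R(-)$, and on the right the correction term $((k_2 g)^{-1}\gamma(k_1 g))\cdot_R(-) = (g^{-1}k_2^{-1}\gamma k_1 g)\cdot_R(-)$; these agree. So your step 4 is exactly the right computation to do, and you correctly identify the key identity $g_2^{-1}\gamma g_1 = g^{-1}(k_2^{-1}\gamma k_1)g$, but your $\Phi_g$ should be plain precomposition rather than a twisted map.
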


This will follow from the Tambara functor statement by taking fixed-point Tambara functors. We will often use \cref{lem:arbitrary-res-of-coind-of-Gring} in the special case $K = H$. The following may be found as, for example, Proposition 5.3 of \cite{TW95}, where it is stated for Mackey functors; their proof clearly goes through for Green and Tambara functors as well.

\begin{lemma} \label{lem:arbitrary-res-of-coind-of-HTamb}
	Let $H$ and $K$ be subgroups of $G$ and let $R$ be an $H$-Green, Mackey, or Tambara functor. Then we have an isomorphism 
\[ 
	\mathrm{Res}_K^G \mathrm{Coind}_H^G R \cong \prod_{g \in K \backslash G / H} \mathrm{Coind}_{K \cap {}^g H}^K \mathrm{Res}_{K \cap {}^g H}^{{}^g H} {}^g R 
\] 
of $K$-Green, Mackey, or Tambara functors respectively.
\end{lemma}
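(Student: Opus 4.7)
The plan is to reduce the claim to a natural isomorphism at the level of underlying $G$-set functors. By definition, $\mathrm{Coind}$ and $\mathrm{Res}$ of Mackey, Green, or Tambara functors are computed by precomposing with the corresponding functors $\mathrm{res}_H^G$ and $\mathrm{ind}_K^G$ on categories of finite equivariant sets (extended to the appropriate span or bispan categories), and likewise ${}^g(-)$ is precomposition with the conjugation isomorphism of categories. Thus
\[
\mathrm{Res}_K^G \mathrm{Coind}_H^G R = R \circ \mathrm{res}_H^G \circ \mathrm{ind}_K^G,
\]
and the right-hand side of the lemma is obtained by composing $R$ with a similarly assembled functor. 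Because $R$ is product-preserving, it suffices to establish that for any $K$-set $Z$ there is a natural isomorphism of $H$-sets
\[
\mathrm{res}_H^G\mathrm{ind}_K^G Z \;\cong\; \bigsqcup_{g \in K \backslash G / H} \mathrm{res}^{{}^g H}_H \mathrm{ind}^{{}^g H}_{K \cap {}^g H} \mathrm{res}^K_{K \cap {}^g H} Z,
\]
where $\mathrm{res}^{{}^g H}_H$ denotes restriction along the conjugation isomorphism $H \cong {}^g H$; one then applies $R$ and uses product preservation to convert the disjoint union on the right into a product.

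First I would prove this set-level isomorphism via the standard double coset decomposition. Decompose $G$ as a $(K, H)$-biset: $G = \bigsqcup_{g \in K \backslash G / H} KgH$, with each block identified via the evident map with $K \times_{K \cap {}^g H} {}^g H$, where $K \cap {}^g H$ is included into $K$ and ${}^g H$ in the natural ways. Substituting into $\mathrm{ind}_K^G Z = G \times_K Z$, the $g$-th summand becomes $\left(K \times_{K \cap {}^g H} {}^g H\right) \times_K Z \cong {}^g H \times_{K \cap {}^g H} Z$, equipped with its natural ${}^g H$-action, and viewed as an $H$-set via $H \cong {}^g H$. This is precisely $\mathrm{res}^{{}^g H}_H \mathrm{ind}^{{}^g H}_{K \cap {}^g H} \mathrm{res}^K_{K \cap {}^g H} Z$, and the assembled isomorphism is manifestly natural with respect to $K$-equivariant maps of $Z$.

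The main obstacle is upgrading naturality from $K$-equivariant maps to the full span or bispan category. For Mackey functors this is exactly \cite[Proposition 5.3]{TW95}: the fiber-product compatibility of the decomposition handles spans, and the Green case then follows because the monoid structures on both sides are computed levelwise from the underlying Mackey structure. For the Tambara case one additionally needs compatibility with dependent products (norms). This follows from the fact that dependent products distribute over disjoint unions of base $G$-sets, so an exponential diagram for a map $Z_1 \to Z_2$ of $K$-sets decomposes as a disjoint union of exponential diagrams indexed by $K \backslash G / H$ after applying $\mathrm{res}_H^G \mathrm{ind}_K^G$; since each factor on the right-hand side is built from operations which interact with exponential diagrams in the Tambara axioms, the isomorphism respects norms as well.
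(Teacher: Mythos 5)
Your proof is correct in outline and follows the standard approach: reduce to the underlying (bi)span categories, decompose $G$ as a biset over double cosets, and observe that the resulting natural isomorphism of $H$-set-valued functors is compatible with the span/bispan structure. The paper itself gives no proof; it cites \cite[Proposition 5.3]{TW95} and asserts that the argument extends to Green and Tambara functors, which is exactly the content your write-up supplies in more detail. One small notational slip worth fixing: for $\mathrm{ind}_K^G Z = G \times_K Z$ you need $G$ as a left-$H$, right-$K$ biset, so the relevant decomposition is into $(H,K)$-double cosets $HgK$, not $(K,H)$-double cosets $KgH$ as written; your final formula is nonetheless the intended one (after the bijection $H\backslash G/K \cong K\backslash G/H$ via $g\mapsto g^{-1}$ and conjugating the isotropy group), so this is a presentation issue rather than a mathematical error. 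Your Tambara sketch is correct in spirit; the precise point is that $\mathrm{res}_H^G$ and $\mathrm{ind}_K^G$ each preserve pullbacks, finite coproducts, and dependent products of finite equivariant sets, hence extend to bispan categories, and a natural isomorphism between such functors promotes to a natural isomorphism of the extended bispan functors.
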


We will later see that the identity double coset factor of the above splitting controls much of the behavior of Green, Mackey, and Tambara functors.
%%%%%%%%%%%%%%%%%%%%%%%%%%%%%%%%%%%%
\section{A product decomposition for Tambara functors}
%%%%%%%%%%%%%%%%%%%%%%%%%%%%%%%%%%%%

In this section we prove our first main theorem, which decomposes any Tambara functor into a product of coinductions. These coinductions are maximal in a sense which is captured by the notion of a \emph{clarified} Tambara functor. The terminology comes by comparison with butter: we will see later that there is a ``clarification" functor, a localization of the category of $G$-Tambara functors, which heuristically is obtained by projecting away anything coinduced from a proper subgroup. The author imagines these coinductions as analogous to the milk solids present in melted butter.

\begin{definition}
	We call an idempotent $d$ in a $G$-ring $R$ \emph{type $H$} if $d$ has isotropy group $H$ and whenever $g \notin H$ we have $d \cdot gd = 0$.
\end{definition}

The protypical example of a type $H$ idempotent is an idempotent in $\mathrm{Coind}_H^G R$ corresponding to projection onto one of the factors $R$.

\begin{definition}
	A $G$-ring $R$ is called \emph{clarified} if $R$ contains no idempotents of type $H$, for $H$ any proper subgroup of $G$. A $G$-Tambara functor $k$ is called clarified if $k(G/e)$ is a clarified $G$-ring.
\end{definition}

Most important $G$-Tambara functors are clarified (for a list of examples, see \cref{prop:examples}). On the other hand, coinductions are not clarified in general. In particular, by the main result of \cite{SSW24}, every Nullstellensatzian $G$-Tambara functor is coinduced, hence is not clarified. There are many examples of field-like $G$-Tambara functors which are not clarified by virtue of being coinduced \cite{Wis24}. The full subcategory of $G$-Tambara functors spanned by the category of clarified $G$-Tambara functors should be thought of as a localization obtained by formally setting the images of all nontrivial coinduction functors equal to zero.

The proof of the product decomposition theorem requires a lengthy careful and technical analysis of the behaviors of idempotents and their interactions with norms. We collect many of these results in \cref{subsec:consequences-of-exp-formula}. Then in \cref{subsec:product-decomp} we prove the product decomposition theorem.

\subsection{Consequences of the exponential formula}\label{subsec:consequences-of-exp-formula}

In this subsection we collect many important consequences of the exponential formula. The key idea underlining all the proofs of this subsection is due to Mazur \cite{Maz13}. In a fixed-point Tambara functor, all exponential formulas are determined by the double coset formula. For example, the norm of a sum of elements in the $G/e$ level is given by 
\[ 
    \mathrm{Nm}_e^H(a+b) = \prod_{h \in H} \left( ha+hb \right) .
\] 
Expanding this product out, we see that two monomials give, respectively, $\mathrm{Nm}_e^H(a)$ and $\mathrm{Nm}_e^H(b)$, and every other monomial is divisible by $h_1 a \cdot h_2 b$ for some $h_i \in H$.

A transfer map is called \emph{proper} if it is induced by an inclusion $K \subset H$ of subgroups for which $K \neq H$, or is a conjugation of such.

\begin{lemma}\label{lem:norm-is-additive-up-to-proper-transfers}
	Let $a_1,...,a_n$ be any collection of elements of $R(G/K)$, for $R$ a $G$-Tambara functor, and let $L$ be a subgroup of $G$ containing $K$. Then \[ \mathrm{Nm}_K^L( \sum_i a_i ) - \sum_i \mathrm{Nm}_K^L(a_i) \] is a sum of proper transfers.
\end{lemma}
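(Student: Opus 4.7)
The plan is to invoke Mazur's strategy by writing $\sum_i a_i$ as a transfer along a fold map and then applying the exponential axiom. Set $X = G/K$, $Y = G/L$, $A = \bigsqcup_{i=1}^n G/K$, let $\nabla : A \to X$ be the fold map, and let $f : X \to Y$ be the projection. Then $(a_1, \ldots, a_n) \in R(A) \cong \prod_i R(G/K)$ is sent by $\mathrm{Tr}_\nabla$ to $\sum_i a_i$, so $\mathrm{Nm}_K^L(\sum_i a_i)$ is the image of $(a_1, \ldots, a_n)$ under $\mathrm{Nm}_f \circ \mathrm{Tr}_\nabla$. The exponential axiom applied to the exponential diagram on $(f, \nabla)$ rewrites this composite as $\mathrm{Tr} \circ \mathrm{Nm} \circ \mathrm{Res}$ along the zigzag $A \leftarrow X \times_Y \Pi_f A \to \Pi_f A \to Y$, so the task reduces to decomposing $\Pi_f A$ into $G$-orbits and analyzing the contribution of each.

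Next I would identify the $G$-orbits of $\Pi_f A$. An element over $y \in Y$ is a section $\sigma : f^{-1}(y) \to A$ of $\nabla$; since $A$ is the disjoint union of $n$ copies of $G/K$, such a section is equivalently a labeling function $i_\sigma : f^{-1}(y) \to \{1, \ldots, n\}$. Unwinding the $G$-action on $\Pi_f A$ shows that the stabilizer of $(y, \sigma)$ is the subgroup of $\mathrm{Stab}_G(y) \cong {}^g L$ that preserves $i_\sigma$ under its transitive action on $f^{-1}(y) \cong L/K$. By transitivity of $L$ on $L/K$, the only labelings fixed by all of $L$ are the constant ones, of which there are exactly $n$; each gives a $G$-orbit isomorphic to $G/L$. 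Every non-constant labeling has stabilizer strictly contained in the corresponding conjugate of $L$, so its $G$-orbit has the form $G/M_\alpha$ with $M_\alpha$ a proper subgroup of some conjugate of $L$.

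Finally I would unpack the contributions orbit by orbit. For the constant orbit $O_{i_0}$ with label $i_0$, the fiber product $X \times_Y O_{i_0}$ is canonically $G/K$; the restriction along $X \times_Y O_{i_0} \to A$ factors through the inclusion of the $i_0$-th summand and so picks out $a_{i_0}$; the norm $G/K \to G/L$ is $\mathrm{Nm}_K^L$; and the final transfer along the identity $O_{i_0} \cong G/L \to Y$ is the identity. Summing over $i_0$ yields exactly $\sum_i \mathrm{Nm}_K^L(a_i)$. Each non-constant orbit contributes a term of the form $\mathrm{Tr}^L_{M_\alpha}(\cdots)$ with $M_\alpha \subsetneq L$ up to conjugacy, which is a proper transfer, giving the claimed expression. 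The main obstacle I anticipate is the orbit-by-orbit bookkeeping: one must identify the fiber products and restriction maps at each orbit precisely, and confirm that no non-constant labeling has full $L$ as stabilizer, which reduces to the transitivity of $L$ on $L/K$.
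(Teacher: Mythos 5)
Your proof is correct and follows essentially the same strategy as the paper: apply the exponential axiom to the fold map $\sqcup G/K \to G/K$ composed with the quotient $G/K \to G/L$, analyze the $G$-orbits of $\Pi_f A$ via labeling functions on $L/K$, observe that constant labelings have full stabilizer $L$ while non-constant ones have proper stabilizer, and match each orbit's contribution to either a norm term or a proper transfer. The only difference is that you treat general $n$ directly with labels $\{1,\ldots,n\}$, whereas the paper does $n = 2$ with labels $\{\mathrm{l},\mathrm{r}\}$ and concludes by induction; this is a cosmetic variation rather than a different route.
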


\begin{proof}
	This may be seen directly from the formula of \cite[Theorem 8.2]{AKGH21}.
\end{proof}

The following lemma is a refinement of a special case of \cref{lem:norm-is-additive-up-to-proper-transfers}. Essentially, it is the observation that the sum of proper transfers appearing in \cref{lem:norm-is-additive-up-to-proper-transfers} is zero in certain special cases.

\begin{lemma}\label{lem:norm-of-idempotents-is-additive}
	Let $a$ and $b$ be mutually orthogonal idempotents in $R(G/e)$ which are fixed by $G$, and let $H$ be any subgroup of $G$. Then $\mathrm{Nm}_e^H(a+b) = \mathrm{Nm}_e^H(a)+\mathrm{Nm}_e^H(b)$.
\end{lemma}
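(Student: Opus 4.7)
The plan is to apply \cref{lem:norm-is-additive-up-to-proper-transfers} to write
\[
	\mathrm{Nm}_e^H(a+b) = \mathrm{Nm}_e^H(a) + \mathrm{Nm}_e^H(b) + \sum_i \mathrm{Tr}_{L_i}^H(x_i) \mathrm{,}
\]
with each $L_i$ a proper subgroup of $H$, and then to show that each $x_i \in R(G/L_i)$ vanishes outright. The additional hypotheses, that $a$ and $b$ are orthogonal idempotents fixed by $G$, will combine with the multiplicativity of the norm to force these vanishings.

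To extract a usable description of $x_i$, I would revisit the orbit analysis from the proof of \cref{lem:norm-is-additive-up-to-proper-transfers}. Each $x_i$ arises from a non-constant function $\sigma : H \to \{\mathrm{l}, \mathrm{r}\}$, and is computed by restricting $(a,b)$ along the pullback map out of $G/e \times_{G/H} \Pi_f(G/e \sqcup G/e)$ and then applying a norm into $R(G/L_i)$. Unpacking the orbit structure as in the preceding proof presents $x_i$ as a product, taken in the commutative ring $R(G/L_i)$, of norms along various $G$-equivariant maps $G/e \to G/L_i$ applied to $a$ (for orbits of the pullback lying over positions where $\sigma = \mathrm{l}$) or to $b$ (where $\sigma = \mathrm{r}$). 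Since $\sigma$ is non-constant, at least one factor of each type appears.

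Next, using that $a$ and $b$ are $G$-fixed and that the norm intertwines conjugation, any two $G$-equivariant maps $G/e \to G/L_i$ differ by an automorphism of $G/e$ whose associated norm is a conjugation $c_\gamma$, and so the norm of $a$ along any such map equals $\mathrm{Nm}_e^{L_i}(a)$, and similarly for $b$. Thus $x_i$ is a product in $R(G/L_i)$ which has both $\mathrm{Nm}_e^{L_i}(a)$ and $\mathrm{Nm}_e^{L_i}(b)$ among its factors. By multiplicativity of the norm, $\mathrm{Nm}_e^{L_i}(a) \cdot \mathrm{Nm}_e^{L_i}(b) = \mathrm{Nm}_e^{L_i}(a b) = \mathrm{Nm}_e^{L_i}(0)$, and I would separately observe that $\mathrm{Nm}_e^{L_i}(0) = 0$ by applying the exponential formula to the factorization $0 = \mathrm{Tr}_\emptyset^{G/e}(0)$: for the surjection $f : G/e \to G/L_i$, the dependent product $\Pi_f(\emptyset)$ is itself empty, so the resulting composition factors through $R(\emptyset) = 0$. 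Consequently $x_i = 0$, and the sum of transfers vanishes.

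The main obstacle is the bookkeeping needed to identify the factors of $x_i$ from the exponential diagram explicitly, and to confirm that both $a$-type and $b$-type factors genuinely appear whenever $\sigma$ is non-constant. Once that combinatorial description is pinned down, $G$-fixedness collapses the dependence on the particular $G$-equivariant maps chosen, and the orthogonality $a b = 0$ together with the identity $\mathrm{Nm}(0) = 0$ dispatches the rest.
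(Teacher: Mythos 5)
Your plan follows essentially the same route as the paper's proof. You invoke \cref{lem:norm-is-additive-up-to-proper-transfers} to obtain
\[
	\mathrm{Nm}_e^H(a+b) = \mathrm{Nm}_e^H(a) + \mathrm{Nm}_e^H(b) + \sum_i \mathrm{Tr}_{L_i}^H(x_i)
\]
and then argue each $x_i$ is a product of norms of $a$ and of $b$ with at least one factor of each type, so it is divisible by $\mathrm{Nm}_e^{L_i}(a)\mathrm{Nm}_e^{L_i}(b) = \mathrm{Nm}_e^{L_i}(ab) = \mathrm{Nm}_e^{L_i}(0) = 0$. This is the paper's argument too (the paper reruns the exponential diagram analysis rather than literally citing the previous lemma, but the structure is the same), and your two supporting observations are correct: $G$-fixedness of $a$ and $b$ really does collapse the conjugations so each factor is a bare $\mathrm{Nm}_e^{L_i}(a)$ or $\mathrm{Nm}_e^{L_i}(b)$, and your derivation of $\mathrm{Nm}_e^{L_i}(0)=0$ from the exponential formula with empty fibers is sound.

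The one thing to flag is that the step you defer --- confirming that for a non-constant $\sigma : H \to \{\mathrm{l},\mathrm{r}\}$ both an $a$-type and a $b$-type factor genuinely occur --- is the substantive content of the lemma, not merely bookkeeping, and your write-up currently asserts it without proof before acknowledging it as the obstacle. To close it one must exhibit two orbit representatives of the preimage of $G\cdot(H,\sigma)$ in $G/e \times_{G/H} \Pi_f(G/e \sqcup G/e)$ whose ``$\mathrm{l}$/$\mathrm{r}$''-values at the identity disagree. Concretely: choose $h \in H$ with $\sigma(h) \neq \sigma(e)$; then $(e,\sigma)$ and $(h^{-1},\sigma)$ are both preimages of $(H,\sigma)$, and $h\cdot(h^{-1},\sigma) = (e, x\mapsto\sigma(hx))$ shows the second orbit has a representative over $e$ whose value is $\sigma(h) \neq \sigma(e)$. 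Once that is in hand your argument goes through.

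Also, a small point worth double-checking when you fill in the details: not every preimage orbit of $G\cdot(H,\sigma)$ has stabilizer exactly $e$ in an obvious parametrization, and the norms involved land in $R(G/K)$ where $K$ is the stabilizer of $(H,\sigma)$; the product and divisibility arguments take place there, not in $R(G/L_i)$ for an arbitrary subconjugate $L_i$, though of course $K$ is conjugate to one of your $L_i$. The paper is careful on this point and you should be as well.
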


\begin{proof}
	The exponential formula gives an alternate description of the norm of a sum. We have 
    \[ 
        \Pi_f (G/e \sqcup G/e) \cong G/H \times \mathrm{Fun}(G/H,\{ \mathrm{l} , \mathrm{r} \}) 
    \] 
    as $G$-sets.
	
	Observe the isomorphism $G/e \times_{G/H} \Pi_f (G/e \sqcup G/e) \cong G/e \times \mathrm{Fun}(G/H,\{ \mathrm{l} , \mathrm{r} \})$, which is a disjoint union of copies of $G/e$, as $G$ acts diagonally. Each orbit has a unique representative given by the tuple with $e$ in the first coordinate. The restriction is a product of projections: it is a map \[ R(G/e) \times R(G/e) \rightarrow \prod_{(e,\sigma : H \rightarrow \{ \mathrm{l} , \mathrm{r} \})} R(G/e) \] where the map $R(G/e) \times R(G/e) \rightarrow R(G/e)$ determining the $(e,\sigma)$th factor is projection onto the left factor in the domain if $\sigma(e) = \mathrm{l}$, or the right factor if $\sigma(e) = \mathrm{r}$, followed by possibly postcomposing with left multiplication by an element of $G$. Since $a$ and $b$ are fixed by $G$ by assumption, the restriction sends $(a,b)$ to the element in the product $\Pi_{(e,\sigma : H \rightarrow \{ \mathrm{l} , \mathrm{r} \})} R(G/e)$ whose value in the $(e,\sigma)$th coordinate is $a$ if $\sigma(e) = \mathrm{l}$ and $b$ if $\sigma(e) = \mathrm{r}$.
	
	The norm along the projection onto $\Pi_f (G/e \sqcup G/e)$ is the product of a collection of norm and multiplication maps. Two factors in this product description respectively send the elements $a$ and $b$ in the factors of $R(G/e)$ corresponding to the two constant functions $\sigma$ to $\mathrm{Nm}_e^H(a)$ and $\mathrm{Nm}_e^H(b)$. To conclude the proof, it suffices to show that every other coordinate of the image of the norm along $G/e \times_{G/H} \Pi_f ( G/e \sqcup G/e) \rightarrow \Pi_f (G/e \sqcup G/e)$ is zero.
	
	The quotient map along which we are taking the norm has the form \[ (g, \sigma : gH \rightarrow \{ \mathrm{l} , \mathrm{r} \}) \mapsto (gH, \sigma : gH \rightarrow \{ \mathrm{l} , \mathrm{r} \}) . \] If $\sigma : gH \rightarrow \{ \mathrm{l} , \mathrm{r} \}$ is not constant, it suffices to find two elements $(e, \sigma_i : H \rightarrow \{ \mathrm{l} , \mathrm{r} \})$ in the preimage of the $G$-orbit of $(H, \sigma : H \rightarrow \{ \mathrm{l} , \mathrm{r} \})$ which have the property $\sigma_1(e) \neq \sigma_2(e)$, because this corresponds to the value of the norm at the factor $R(G \cdot (H,\sigma))$ of $R(\Pi_f (G/e \sqcup G/e))$ being divisible by \[ \mathrm{Nm}_e^K(a) \mathrm{Nm}_e^K(b) = \mathrm{Nm}_e^K(ab) = \mathrm{Nm}_e^K(0) = 0 \] where $K$ is the stabilizer of $(H,\sigma)$. In other words, the composition 
    \begin{align*}
        R(G/e) \times R(G/e) & \rightarrow R(G/e \times_{G/H} \Pi_f (G/e \sqcup G/e)) \\
        & \rightarrow R(\Pi_f (G/e \sqcup G/e)) \rightarrow R(G \cdot (H,\sigma))
    \end{align*} 
    sends $(a,b)$ to zero.
	
	Since $\sigma$ is not constant, there exists $h \in H$ such that $\sigma(h) \neq \sigma(1)$. Now $(e, \sigma)$ and $(h^{-1},\sigma)$ are two distinct preimages of $(H,\sigma)$, and $h(h^{-1},\sigma) = (e,x \mapsto \sigma(hx))$ implies $(e,x\mapsto \sigma(hx))$ is in the preimage of the $G$-orbit of $(H, \sigma : H \rightarrow \{ \mathrm{l} , \mathrm{r} \})$. Therefore $(e,\sigma)$ and $(e,x \mapsto \sigma(hx))$ have the desired property.
\end{proof}

The following lemma is a kind of total opposite to \cref{lem:norm-of-idempotents-is-additive}. It is a refinement of the special case of \cref{lem:norm-is-additive-up-to-proper-transfers} where the norms $\mathrm{Nm}_H^L(a_i)$ are zero but the $a_i$ sum to $1$.

\begin{lemma}\label{lem:coind-at-bottom-implies-surj-transfer}
	Let $k$ be a $G$-Tambara functor such that $k(G/e) \cong \mathrm{Coind}_H^G R$ for some $H$-ring $R$. For any $L$ which is not subconjugate to $H$, there exists a $G$-set morphism $X \rightarrow G/L$ such that the transfer $k(X) \rightarrow k(G/L)$ is surjective and the stabilizer of every point of $X$ is a proper subconjugate of $L$.
\end{lemma}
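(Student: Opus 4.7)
The plan is to exhibit $1 \in k(G/L)$ as a sum of proper transfers, from which a suitable $X$ will be built as a disjoint union of $G/K_j$ with $K_j \subsetneq L$. The hypothesis $k(G/e) \cong \mathrm{Coind}_H^G R \cong \prod_{gH \in G/H} R$ supplies a complete system $\{e_{gH}\}_{gH \in G/H}$ of mutually orthogonal idempotents summing to $1$ in $k(G/e)$, where the $G$-stabilizer of $e_{gH}$ is $gHg^{-1}$. Fixing double coset representatives $g_i$ for $L \backslash G / H$, I will form $f_i := \sum_{\ell g_i H \in L \cdot g_i H} e_{\ell g_i H}$; these are mutually orthogonal $L$-fixed idempotents of $k(G/e)$ summing to $1$.

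Next I work inside $\mathrm{Res}_L^G k$ viewed as an $L$-Tambara functor. Since the $f_i$ are $L$-fixed, mutually orthogonal, and sum to $1$, an inductive application of \cref{lem:norm-of-idempotents-is-additive} gives $1 = \mathrm{Nm}_e^L(1) = \sum_i \mathrm{Nm}_e^L(f_i)$ in $k(G/L)$. Applying \cref{lem:norm-is-additive-up-to-proper-transfers} to expand each $\mathrm{Nm}_e^L(f_i)$ in terms of the individual (no longer $L$-fixed) summands yields
\[ \mathrm{Nm}_e^L(f_i) = \sum_{\ell g_i H \in L \cdot g_i H} \mathrm{Nm}_e^L(e_{\ell g_i H}) + T_i, \]
where $T_i$ is a sum of transfers from subgroups properly contained in $L$.

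The main obstacle is to show that every leftover term $\mathrm{Nm}_e^L(e_{\ell g_i H})$ vanishes, and this is precisely where the hypothesis that $L$ is not subconjugate to $H$ enters. The $L$-stabilizer of $e_{\ell g_i H}$ equals $L \cap \ell g_i H g_i^{-1} \ell^{-1}$, a proper subgroup of $L$, so there is some $\lambda \in L$ with $\lambda e_{\ell g_i H}$ distinct from and orthogonal to $e_{\ell g_i H}$. Because $L$ acts trivially on $k(G/L)$ (the Weyl action factors through $N_G L / L$), the naturality $c_\lambda \mathrm{Nm}_e^L = \mathrm{Nm}_e^L c_\lambda$ yields $\mathrm{Nm}_e^L(\lambda e_{\ell g_i H}) = \mathrm{Nm}_e^L(e_{\ell g_i H})$, and multiplicativity of the norm then gives $\mathrm{Nm}_e^L(e_{\ell g_i H})^2 = \mathrm{Nm}_e^L(e_{\ell g_i H} \cdot \lambda e_{\ell g_i H}) = \mathrm{Nm}_e^L(0) = 0$; since this element is also idempotent, it must be $0$. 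With all leftover terms vanishing, $1$ is exhibited as $\sum_j \mathrm{Tr}_{K_j}^L(x_j)$ with each $K_j \subsetneq L$. Taking $X := \bigsqcup_j G/K_j$ with its canonical map to $G/L$, Frobenius reciprocity shows the image of the transfer $k(X) \to k(G/L)$ is an ideal containing $1$, hence is all of $k(G/L)$, while every stabilizer in $X$ is a proper subconjugate of $L$.
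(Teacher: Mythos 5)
Your proof is correct and follows the paper's approach: exhibit $1 \in k(G/L)$ as a sum of proper transfers via \cref{lem:norm-is-additive-up-to-proper-transfers}, by showing that each $\mathrm{Nm}_e^L(e_{gH})$ vanishes using orthogonality of the idempotents, multiplicativity of the norm, and $L$-invariance of $\mathrm{Nm}_e^L$. Your detour through the $L$-orbit groupings $f_i$ and \cref{lem:norm-of-idempotents-is-additive} is unnecessary (the paper applies \cref{lem:norm-is-additive-up-to-proper-transfers} directly to $1 = \sum_g e_{gH}$), and you argue $L$-invariance of the norm via Weyl conjugation naturality rather than the paper's observation that right multiplication by elements of $L$ gives a $G$-map $G/e \to G/e$ over $G/L$, but both variants are valid and the core vanishing argument is identical.
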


\begin{proof}
    It suffices to show that $1 \in k(G/L)$ is in the image of some transfer $k(X) \rightarrow k(G/L)$ where $X$ has the desired property, because the transfer is a map of $k(G/L)$ modules, and $1 \in k(G/L)$ generates $k(G/L)$ as a $k(G/L)$ module.
		
	Let $d$ be an idempotent of $k(G/e)$ corresponding to projection onto any one of the factors of $R$, and observe that the elements $gd$ form a complete set of orthogonal idempotents of $k(G/e)$. By \cref{lem:norm-is-additive-up-to-proper-transfers} and the fact that the norm of $1$ is $1$, it suffices to show $\mathrm{Nm}_e^L(gd) = 0$ for all $g \in G$, as we may then take $X$ to be every $G$-orbit of $\Pi_f (G/e \sqcup G/e)$ except the two corresponding to constant $\sigma$ (where $\sigma$ is as in the proof of \cref{lem:norm-is-additive-up-to-proper-transfers}). 
	
	The stabilizer of $gd$ is a conjugate of $H$, and by assumption there is an element $\gamma \in L$ which is not contained in this conjugate. Then we have $\gamma gd \neq gd$, hence $(\gamma g d)(gd) = 0$. The norm $\mathrm{Nm}_e^L$ factors through $L$-orbits, because if $\rho \in L$, then right multiplication by $\rho$ defines a $G$-equivariant morphism $G/e \rightarrow G/e$ over $G/L$ (and the $G$-action on $R(G/e)$ arises from the isomorphism $G \cong \mathrm{Aut}_{\mathrm{Span}(G-\Set^{\mathrm{fin}})}(G/e)$ given by right multiplication). Thus we compute 
    \begin{align*}
        \mathrm{Nm}_e^L & (gd) = \mathrm{Nm}_e^L(gd^2) = \mathrm{Nm}_e^L(gd)^2 \\
        & = \mathrm{Nm}_e^L(\gamma gd) \mathrm{Nm}_e^L(gd) = \mathrm{Nm}_e^L((\gamma gd)(gd)) = \mathrm{Nm}_e^L(0) = 0 
    \end{align*}
    and conclude the proof by appealing to \cref{lem:norm-is-additive-up-to-proper-transfers}.
\end{proof}

\subsection{The product decomposition}\label{subsec:product-decomp}

Our goal is to show that any $G$-Tambara functor decomposes as a product of coinductions of clarified Tambara functors. The key idea is that this decomposition is determined entirely by behavior which may be detected in the $G/e$ level. We start with the following proposition, which allows us to give part of the argument in the much simpler setting of rings with $G$-action. The author expects a similar result to be true for the global power functors \cite{Sch18} and global Tambara functors \cite{Krs20}.

\begin{proposition}\label{prop:bottom-level-reflects-products}
	The functor $R \mapsto R(G/e)$ from $G {-} \Tamb$ to $G {-} \Ring$ reflects products in the sense that if $R(G/e) \cong S_1 \times S_2$ is an isomorphism of $G$-rings, then there is a canonical isomorphism $R \cong R_1 \times R_2$ of Tambara functors with $R_i(G/e) = S_i$ given on the bottom level by the specified isomorphism $R(G/e) \cong R_1(G/e) \times R_2(G/e)$.
\end{proposition}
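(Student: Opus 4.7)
The plan is to promote the $G$-fixed orthogonal idempotents $e_1, e_2 \in R(G/e)$ witnessing the decomposition $R(G/e) \cong S_1 \times S_2$ to a compatible family of orthogonal idempotents in every level $R(G/H)$. The natural candidate is $e_i^H := \mathrm{Nm}_e^H(e_i) \in R(G/H)$; I would then define $R_i(G/H) := e_i^H R(G/H)$ and aim to show these data assemble into a sub-Tambara-functor of $R$ whose product is $R$. That each $e_i^H$ is an idempotent and that $e_1^H e_2^H = 0$ is immediate from multiplicativity of norms, since $e_1 e_2 = 0$ and $\mathrm{Nm}_e^H(0) = 0$. The nontrivial point is $e_1^H + e_2^H = 1$, which is exactly \cref{lem:norm-of-idempotents-is-additive} applied to $e_1 + e_2 = 1$ together with $\mathrm{Nm}_e^H(1) = 1$. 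This gives the desired levelwise ring decomposition $R(G/H) \cong e_1^H R(G/H) \times e_2^H R(G/H)$, and since $e_i^{\{e\}} = e_i$, the bottom level recovers the given isomorphism.

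Next I would verify that the levelwise projections $x \mapsto e_i^H x$ commute with all structure maps, so that the $R_i$ are actual sub-Tambara-functors and the induced map $R \to R_1 \times R_2$ is a Tambara functor morphism. For conjugation, the fact that norms intertwine conjugation together with $G$-fixedness of $e_i$ yields $c_g(e_i^H) = \mathrm{Nm}_e^{gHg^{-1}}(e_i) = e_i^{gHg^{-1}}$. For restriction, the double coset formula for the mixed composite $\mathrm{Res}_K^H \mathrm{Nm}_e^H$ collapses on the $G$-fixed element $e_i$ to a product of copies of $\mathrm{Nm}_e^K(e_i)$, which is itself idempotent, so the product equals $e_i^K$; hence $\mathrm{Res}_K^H(e_i^H x) = e_i^K \mathrm{Res}_K^H(x)$. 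Frobenius reciprocity handles transfer: $e_i^H \mathrm{Tr}_K^H(x) = \mathrm{Tr}_K^H(\mathrm{Res}_K^H(e_i^H) \cdot x) = \mathrm{Tr}_K^H(e_i^K x)$. For norm, multiplicativity together with norm functoriality gives $\mathrm{Nm}_K^H(e_i^K) = \mathrm{Nm}_K^H \mathrm{Nm}_e^K(e_i) = \mathrm{Nm}_e^H(e_i) = e_i^H$, so $\mathrm{Nm}_K^H(e_i^K x) = e_i^H \mathrm{Nm}_K^H(x)$.

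Having checked these compatibilities, the induced map $R \to R_1 \times R_2$ is a morphism of Tambara functors that is a levelwise isomorphism of rings, hence an isomorphism. Canonicality is automatic: the whole construction is determined by the idempotents $e_i \in R(G/e)$, which are themselves determined by the given $G$-ring decomposition. The main obstacle is the additivity statement $e_1^H + e_2^H = 1$, where a priori there could be a correction term supported on proper transfers (as in \cref{lem:norm-is-additive-up-to-proper-transfers}); the content of \cref{lem:norm-of-idempotents-is-additive} is precisely that for orthogonal $G$-fixed idempotents this correction vanishes, and its proof is the delicate ingredient, requiring the analysis of $\Pi_f(G/e \sqcup G/e)$ and the vanishing of non-constant sections. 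Everything else in the argument is a routine unfolding of the Tambara axioms.
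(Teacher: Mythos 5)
Your proof is correct and takes essentially the same route as the paper: define $R_i$ levelwise as the summand cut out by the idempotent $\mathrm{Nm}_e^H(e_i)$, invoke \cref{lem:norm-of-idempotents-is-additive} to get completeness, and verify compatibility with restriction, conjugation, norm, and transfer using the double coset formula, norm functoriality, and Frobenius reciprocity. The paper states the structure-map compatibilities a bit more tersely, but the underlying checks are the ones you carry out.
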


\begin{proof}
	Let $d_i \in S_i$ denote the unit element, which we regard as an idempotent in $R(G/e)$. Being a multiplicative monoid morphism, each $\mathrm{Nm}_e^H(d_i)$ is an idempotent. Moreover, the $\mathrm{Nm}_e^H(d_i)$ are orthogonal idempotents. By assumption, $G$ fixes each $d_i$. \cref{lem:norm-of-idempotents-is-additive} then implies \[ 1 = \mathrm{Nm}_e^H(d_1+d_2) = \mathrm{Nm}_e^H(d_1)+\mathrm{Nm}_e^H(d_2) \] so that the $\mathrm{Nm}_e^H(d_i)$ form a complete set of orthogonal idempotents in $R(G/H)$.
	
	We may now define $S_i(G/H) := \mathrm{Nm}_e^H(d_i)R(G/H)$. To check that each $S_i$ is a Tambara functor, it suffices to check that the restriction, norm, and transfer of $R$ restrict to $S_i$. Restrictions and norms take $\mathrm{Nm}_e^H(d_i)$ to $\mathrm{Nm}_e^L(d_i)$ and preserve multiplication, hence descend to $S_i$. Finally, since $\mathrm{Nm}_e^H(d_i) = \mathrm{Res}_H^L \mathrm{Nm}_e^L(d_i)$, Frobenius reciprocity therefore implies \[ T_H^L(\mathrm{Nm}_e^H(d_i) x) = \mathrm{Nm}_e^L(d_i) T_H^L(x) \] so that the transfer also descends to $S_i$.	More precisely, we have observed that the $\mathrm{Nm}_e^H(d_1)$ generate a Tambara ideal of $R$, and the corresponding quotient is given levelwise by the rings $S_2(G/H)$, and vice versa.
\end{proof}

We emphasize that \cref{prop:bottom-level-reflects-products} is particularly useful because Tambara functors tend to have much simpler $G/e$-level than any other levels. For example, in the Burnside and representation Tambara functors, the $G/e$ level is just $\Z$, whereas the other levels are more complicated finitely presented $\Z$-algebras. Thus the advantage of \cref{prop:bottom-level-reflects-products} is that it allows us to check the simplest part of a Tambara functor to see whether or not it is a product. We also note that the $G/e$ level of a nonzero Tambara functor cannot be zero, so there are no degenerate cases of \cref{prop:bottom-level-reflects-products}.

\begin{proposition}\label{prop:product-splitting-of-G-rings}
	Let $R$ be a Noetherian ring with $G$-action. Then \[ R \cong \prod_{H \subset G} \mathrm{Coind}_H^G R_H \] where each $R_H$ is a clarified $H$-ring.
\end{proposition}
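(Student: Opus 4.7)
The plan is to use the Noetherian hypothesis to extract a finite complete set of primitive idempotents of $R$, organize them by $G$-orbit, and identify each orbit-sum with a coinduction from the stabilizer. Since $R$ is Noetherian, $\mathrm{Spec}(R)$ is a Noetherian topological space and hence has only finitely many connected components. Each connected component is clopen and so corresponds to an idempotent of $R$, producing a canonical complete orthogonal set $e_1, \ldots, e_n \in R$ with each $e_i R$ having connected spectrum, equivalently having no nontrivial idempotents.

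The $G$-action permutes $\{e_1, \ldots, e_n\}$, partitioning it into orbits $\mathcal{O}_1, \ldots, \mathcal{O}_m$. Each $E_j := \sum_{e \in \mathcal{O}_j} e$ is a $G$-fixed idempotent, and $\sum_j E_j = 1$, yielding a $G$-ring decomposition $R \cong \prod_j E_j R$. Fix $d_j \in \mathcal{O}_j$ with stabilizer $H_j$. As a ring, $E_j R \cong \prod_{gH_j \in G/H_j} (gd_j) R$, with $G$ transitively permuting the factors; this is precisely the ring-level description of coinduction, so evaluating at a chosen set of coset representatives produces a $G$-ring isomorphism $E_j R \cong \mathrm{Coind}_{H_j}^G (d_j R)$, where $d_j R$ is an $H_j$-ring via the restricted action. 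Each $d_j R$ is vacuously clarified as an $H_j$-ring: its only idempotents are $0$ and $d_j$, and both are fixed by all of $H_j$, so neither can have proper isotropy.

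To match the indexing in the statement, pick one representative per conjugacy class of subgroups appearing among the $H_j$; by replacing each $d_j$ with a $G$-conjugate if necessary, we may assume every $H_j$ lies in this chosen set. Setting $R_H := \prod_{j : \mathrm{Stab}(d_j) = H} d_j R$ for those $H$ and $R_H := 0$ otherwise yields the desired decomposition, since a short isotropy analysis shows that a finite product of clarified $H$-rings is again clarified (an idempotent tuple with proper isotropy $K \subsetneq H$ satisfying the orthogonality condition forces some factor to contain a type-$K$ idempotent itself). The main technical point is the $G$-equivariant identification $E_j R \cong \mathrm{Coind}_{H_j}^G (d_j R)$, whose verification against the formula $(\gamma \cdot f)(g_1 H) = g_2^{-1}\gamma g_1 \cdot f(\gamma g_1 H)$ from the preliminaries is a direct calculation. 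The Noetherian hypothesis is used exactly to ensure that the primitive idempotents $e_i$ exist and sum to the unit; the conclusion may fail for non-Noetherian $R$ whose spectrum has infinitely many connected components.
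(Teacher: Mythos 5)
Your proof is correct. The key difference from the paper's argument is in how you establish that $R$ has a finite complete set of primitive orthogonal idempotents: you pass to $\mathrm{Spec}(R)$, note that it is a Noetherian topological space and therefore has finitely many connected components, each clopen and hence cut out by a canonical idempotent with $e_i R$ having connected spectrum. The paper instead gives a direct combinatorial argument: it shows that any two complete orthogonal sets of idempotents have a common refinement of at least the larger cardinality, and that an unbounded sequence of refinements produces a strictly ascending chain of ideals (each generated by all but one idempotent), contradicting the Noetherian hypothesis; it then uses the refinement trick again to get a unique maximal such set, so that $G$-invariance is automatic. Your topological route is cleaner and makes $G$-invariance and primitivity of the $e_i$ transparent, at the cost of invoking the standard correspondence between clopen sets and idempotents, while the paper's argument is more elementary and self-contained and would generalize to non-commutative or other settings where one does not want to appeal to the spectrum. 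Both approaches then identify each orbit of primitive idempotents with a coinduction in the same way, so the remainder of your argument -- including the isotropy analysis showing a finite product of clarified $H$-rings is clarified -- matches the paper in spirit.
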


\begin{proof}
	We first prove that any Noetherian ring is expressed as a finite product of rings which may not be written nontrivially as products themselves.	We will say that an idempotent $d$ ``lives under" another idempotent $d'$ if $d \cdot d' = d$.
	
	If $\{ d_i \}$ and $\{ d_j \}$ are two complete sets of orthogonal idempotents, then $\{ d_i d_j | d_i d_j \neq 0 \}$ is another complete set of orthogonal idempotents, which has cardinality at least the cardinality of $\{ d_i \}$. So, if for every $n$ there exists a complete set of orthogonal idempotents of cardinality at least $n$, then we can choose a sequence of complete sets of orthogonal idempotents $\{ d_{i,n} \}_{i \in I_n}$ such that $|I_n| \rightarrow \infty$ and each $d_{i,n}$ is a sum of idempotents $d_{j,n+1}$ (and it follows that $d_{j,n+1}$ lives under $d_{i,n}$ and does not live under any other element of $\{ d_{i,n} \}$).
	
	We construct a strictly increasing chain of ideals $J_n$ by induction. $J_n$ will be the ideal generated by $|I_n|-1$ elements of $I_n$. By inductive hypothesis, $J_{n-1}$ has been constructed so that the idempotent $d_{i,n-1}$ which it does not contain has the property that the cardinality of the subset of $I_m$ consisting of idempotents living under $d_{i,n-1}$ goes to infinity as $m$ goes to infinity. We may therefore choose an idempotent $d_{j,n}$ living under $d_{i,n-1}$ with the very same property, by the pigeonhole principle, and define $J_n$ to be the ideal generated by all idempotents of $I_n$ except $d_{j,n}$.
	
	This contradicts the assumption that $R$ is Noetherian, hence there is a finite complete set $I$ of orthogonal idempotents, such that no element of $I$ is a sum of two nontrivial idempotents. If $J$ is another such set, then the set of nonzero products of elements of $I$ and $J$ is larger than $I$, unless $I = J$. Therefore $I$ determines a decomposition of $R$ as a finite product of rings $R_i$ for which $R_i$ contains no nontrivial idempotents.
	
	Note that $G$ acts on $I$. Indeed, $G$ takes $I$ to a finite maximal complete set of orthogonal idempotents, which we have just seen must be $I$ itself. Therefore we are entitled to write $I \cong \sqcup_j G/H_j$ as a $G$-set. This immediately implies \[ R \cong \prod_j \mathrm{Coind}_{H_j}^G R_j \] where $R_j$ is the ring corresponding to the idempotent identified with the identity coset $eH_j$ under the isomorphism $I \cong \sqcup_j G/H_j$. Since coinduction commutes with products, we may rearrange and combine like terms so that this product decomposition has the claimed form.
\end{proof}

The product decomposition of \cref{prop:product-splitting-of-G-rings} is not unique. For example, when $G$ is abelian, we have a $G$ action on $\mathrm{Coind}_e^G \Z \cong \Z^{|G|}$ by permuting the factors. In fact, there may be even more automorphisms: we have $\mathrm{Coind}_e^G \left( \Z \times \Z \right) \cong \mathrm{Coind}_e^G \Z \times \mathrm{Coind}_e^G \Z$, so $G \times G$ acts faithfully on $\mathrm{Coind}_e^G \left( \Z \times \Z \right)$ when $G$ is abelian.

Combining \cref{prop:product-splitting-of-G-rings} and \cref{prop:bottom-level-reflects-products}, we are therefore reduced to showing that if $k$ is a Tambara functor with $k(G/e)$ coinduced from an $H$-ring, then $k$ is coinduced as a Tambara functor.

\begin{proposition}\label{prop:coinduction-is-detected-at-bottom}
	Let $G$ be a finite group. Let $k$ be a $G$-Tambara functor such that $k(G/e)$ is the coinduction of some $H$-ring. Then $k \cong \mathrm{Coind}_H^G \ell$ for some $H$-Tambara functor $\ell$.
\end{proposition}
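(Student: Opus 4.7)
The plan is to induct on $|L|$, for $L$ ranging over subgroups of $G$, to show that the map $f \colon k \to \mathrm{Coind}_H^G \ell$ furnished by \cref{lem:bottom-few-levels-are-coinduced} is an isomorphism at every level $G/L$. Since $\mathrm{Res}_H^G f$ is already an isomorphism, $f(G/e)$ serves as a base case.

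For the inductive step I would split into two cases. If $L$ is subconjugate to $H$, then after conjugating we may assume $L \subseteq H$, whereupon $k(G/L) = (\mathrm{Res}_H^G k)(H/L)$ and similarly for $\mathrm{Coind}_H^G \ell$, so $f(G/L)$ is automatically an isomorphism. If $L$ is not subconjugate to $H$, I would apply \cref{lem:coind-at-bottom-implies-surj-transfer} to both $k$ and $\mathrm{Coind}_H^G \ell$, whose $G/e$-levels are both coinduced from $H$-rings. Inspection of the proof of that lemma shows that the same $G$-set $X$, all of whose isotropy groups are proper subconjugates of $L$, works on both sides, producing an element $\xi \in k(X)$ with $\mathrm{Tr}(\xi) = 1$ and analogously for $\mathrm{Coind}_H^G \ell$.

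For surjectivity of $f(G/L)$: since $X$ decomposes as a disjoint union of orbits $G/L_i$ with $|L_i| < |L|$, the inductive hypothesis makes $f(X) = \prod_i f(G/L_i)$ an isomorphism, and commuting $f$ with the transfer forces $f(G/L)$ to be surjective. For injectivity: $\ker(f)$ is a Tambara ideal of $k$ and in particular is stable under restriction, so for any $\alpha \in \ker(f)(G/L)$ Frobenius reciprocity yields
\[
\alpha \;=\; \alpha \cdot \mathrm{Tr}(\xi) \;=\; \mathrm{Tr}\bigl(\mathrm{Res}(\alpha) \cdot \xi\bigr) \;=\; \mathrm{Tr}(0) \;=\; 0,
\]
because $\mathrm{Res}(\alpha) \in \ker(f)(X)$, which vanishes by the inductive hypothesis.

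The main obstacle is confirming that \cref{lem:coind-at-bottom-implies-surj-transfer} can be applied simultaneously to $k$ and $\mathrm{Coind}_H^G \ell$ with matching data, i.e.\ producing the same $X$ and compatible transfers. This reduces to observing that the key vanishing $\mathrm{Nm}_e^L(gd) = 0$ in the proof of that lemma depends only on the existence of a coinduced $G$-ring structure on the $G/e$-level, a structure both Tambara functors carry and which is intertwined by $f$ at the bottom level. Once this is in place, the rest of the argument is a routine bookkeeping of the transfer squares.
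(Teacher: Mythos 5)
Your proof is correct and follows essentially the same approach as the paper: both use the map $f\colon k \to \mathrm{Coind}_H^G\ell$ from \cref{lem:bottom-few-levels-are-coinduced}, combine \cref{lem:coind-at-bottom-implies-surj-transfer} with naturality of transfer for surjectivity, and use Frobenius reciprocity (via $1 = \mathrm{Tr}(\xi)$) for injectivity. Your explicit induction on $|L|$ is arguably a cleaner way to organize things than the paper's phrasing, which reads \cref{lem:coind-at-bottom-implies-surj-transfer} as producing an $X$ whose isotropy is subconjugate to $H$, whereas the lemma as stated only gives isotropy properly subconjugate to $L$; your induction on $|L|$ handles this automatically rather than requiring the reader to notice that one must iterate the lemma.
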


The proof simplifies enormously if $H$ is normal in $G$. In this case, $\mathrm{Res}_H^G k$ carries a $G/H$ action by permuting cosets in $G/H$ according to right multiplication. One may straightforwardly show that the fixed points of this action give the desired $\ell$. The proof of \cref{prop:coinduction-is-detected-at-bottom} is a careful unpacking of this idea, abstracting it enough that the requirement that $H$ is normal disappears.

\begin{proof}
	We induct on the order of $G$. If $H = G$, there is nothing to prove, as $\mathrm{Res}_G^G$ and $\mathrm{Coind}_G^G$ are the identity functors. Now assume $H$ is a proper subgroup of $G$. The first half of our argument will consist entirely of constructing a map $k \rightarrow \mathrm{Coind}_H^G \ell$ which becomes an isomorphism after applying $\mathrm{Res}_H^G$.
	
	 To start with, \cref{lem:arbitrary-res-of-coind-of-Gring} supplies an $H$-ring isomorphism 
\[ 
	\mathrm{Res}_H^G \mathrm{Coind}_H^G R \cong \prod_{g \in H \backslash G / H} \mathrm{Coind}_{H \cap {}^g H}^H \mathrm{Res}_{H \cap {}^g H}^{{}^g H} {}^g R .
\] 
The formula $\left( \mathrm{Res}_H^G k \right) \left( H/e \right) \cong \mathrm{Res}_H^G \left( k(G/e) \right)$, \cref{prop:bottom-level-reflects-products}, and our inductive hypothesis imply that we have an isomorphism of $H$-Tambara functors
\[ 
	\mathrm{Res}_H^G k \cong \prod_{g \in H \backslash G / H} \mathrm{Coind}_{H \cap {}^g H}^H \ell_g 
\] 
where $\ell_g$ is a $(H \cap {}^g H)$-Tambara functor with the following properties. First, we have
\[ 
	\ell_g(H \cap {}^g H/e) \cong \mathrm{Res}_{H \cap {}^g H}^{{}^g H} {}^g R \mathrm{,}
\] 
and second, $\ell_g(H \cap {}^g H/L)$ receives a projection from $k(G/L)$ corresponding to the norm $\mathrm{Nm}_e^L$ of the idempotent picking out the copy of $R \cong \ell_g(H \cap {}^g H/e)$ in $k(G/e)$.

Since coinduction commutes with products, as it is a right adjoint, the restriction-coinduction adjunction yields a morphism
\[ 
	k \rightarrow \prod_{g \in H \backslash G / H} \mathrm{Coind}_{H \cap gHg^{-1}}^G \ell_g 
\] 
which we post-compose with projection onto the identity double coset factor to obtain a map $k \rightarrow \mathrm{Coind}_H^G \ell_e$.
	
	We set $\ell = \ell_e$, and we now aim to prove $\mathrm{Res}_H^G k \rightarrow \mathrm{Res}_H^G \mathrm{Coind}_H^G \ell$ is an isomorphism. Notice that the induced map in level $H/e$ is precisely the $H$-ring isomorphism of \cref{lem:arbitrary-res-of-coind-of-Gring}, as both are obtained as the adjoint of the same map. From this, we observe that the composition 
\begin{align*} 
	\prod_{g \in H \backslash G / H} \mathrm{Coind}_{H \cap {}^g H}^H \ell_g & \cong \mathrm{Res}_H^G k \\
	& \rightarrow \mathrm{Res}_H^G \mathrm{Coind}_H^G \ell \\
	& \cong \prod_{g \in H \backslash G / H} \mathrm{Coind}_{H \cap {}^g H}^H \mathrm{Res}_{H \cap {}^g H}^{{}^g H} {}^g \ell
\end{align*}
is actually diagonal. In other words, it is the product of maps 
\[ 
	\mathrm{Coind}_{H \cap {}^g H}^H \ell_g \rightarrow \mathrm{Coind}_{H \cap {}^g H}^H \mathrm{Res}_{H \cap {}^g H}^{{}^g H} {}^g \ell
\] 
and we must show each of these is an isomorphism. By construction, the $HeH$ component is an isomorphism, and the bottom level is an equivariant ring isomorphism.

	It is clear from the construction that applying $\mathrm{Res}_{H \cap {}^g H}^H$ to both maps above yields a product over double cosets of maps, and the map corresponding to the identity double coset factor is $\ell_g \rightarrow \mathrm{Res}_{H \cap {}^g H}^{{}^g H} {}^g \ell$. It suffices in fact to show that this map is an isomorphism. Unwinding definitions, we observe that this map is obtained from the structure of $k$. More explicitly, conjugation in $k$ determines an $H \cap {}^g H$-Tambara functor morphism $c_g : \ell_g \rightarrow \mathrm{Res}_{H \cap {}^g H}^H \ell_e$ by the following reasoning. The Tambara structure of both the domain and the codomain are inherited from the Tambara structure of $k$ by projecting onto norms of idempotents. Conjugation preserves this structure and sends one idempotent to the other, and hence all norms of one idempotent to all norms of the other.
	
	Our map $\ell_g \rightarrow \mathrm{Res}_{H \cap {}^g H}^{{}^g H} {}^g \ell$ is thus obtained as the composition \[ \ell_g \xrightarrow{c_g} \mathrm{Res}_{H \cap {}^g H}^H \ell_e \xrightarrow{\mathrm{Id}} \mathrm{Res}_{H \cap {}^g H}^H \ell \xrightarrow{c_{g^{-1}}} \mathrm{Res}_{H \cap {}^g H}^{{}^g H} {}^g \ell \] where the middle map is the identity double coset factor of the restriction of the map $k \rightarrow \mathrm{Coind}_H^G \ell$ and the last map is induced by conjugation by $g^{-1}$ in $\mathrm{Coind}_H^G \ell$. In fact, each of these maps is invertible: the middle map is clearly invertible, and the first and third maps have inverse induced by conjugation by $g^{-1}$ and $g$ respectively in $k$ and $\mathrm{Coind}_H^G \ell$.

	At this point, we have constructed a map $k \rightarrow \mathrm{Coind}_H^G \ell$ which becomes an isomorphism after applying $\mathrm{Res}_H^G$. Since $\mathrm{Res}_H^G$ is computed by precomposing a Tambara functor with $\mathrm{Ind}_H^G$ and $\mathrm{Ind}_H^G H/L \cong G/L$, we see that $k(X) \rightarrow \mathrm{Coind}_H^G \ell(X)$ is an isomorphism whenever $X$ is a $G$-set for which all elements of $X$ have isotropy sub-conjugate to $H$.
	
	On the other hand, assume $L \subset G$ is not conjugate to a subgroup of $H$. By \cref{lem:coind-at-bottom-implies-surj-transfer} there is a $G$-set morphism $X \rightarrow G/L$ such that all elements of $X$ have isotropy sub-conjugate to $H$ and the transfer $\mathrm{Coind}_H^G \ell(X) \rightarrow \mathrm{Coind}_H^G \ell(G/L)$ is surjective. Since $k(G/L) \rightarrow \mathrm{Coind}_H^G \ell(G/L)$ factors a composition of two surjections, $k(X) \rightarrow \mathrm{Coind}_H^G \ell(X) \rightarrow \mathrm{Coind}_H^G \ell(G/L)$, it is itself surjective.
	
	Analogously, injectivity of $k(G/L) \rightarrow \mathrm{Coind}_H^G \ell(G/L)$ will follow from the existence of an injective restriction $k(G/L) \rightarrow k(X)$, where every element of $X$ has isotropy sub-conjugate to $H$. By \cref{lem:coind-at-bottom-implies-surj-transfer}, $1 \in k(G/L)$ is the transfer of some $x \in k(X)$, where $X$ has the desired property. If $y$ restricts to zero in $k(X)$, then Frobenius reciprocity implies \[ y = 1 \cdot y = \mathrm{Tr}_X^{G/L}(x) \cdot y = \mathrm{Tr}_X^{G/L}(x \cdot \mathrm{Res}_X^{G/L}(y)) = 0 \] hence the restriction $k(G/L) \rightarrow k(X)$ is injective, as desired.
\end{proof}

This implies a strengthening of one of the main theorems of \cite{Wis24}. Namely, \cref{cor:field-likes-are-coinduced-from-clarified} below was proven only in the case $G = C_{p^n}$. It reduces the study of all field-like Tambara functors to those for which every level is a field, since this is equivalent to being clarified for field-like Tambara functors.

\begin{corollary}\label{cor:field-likes-are-coinduced-from-clarified}
	Let $k$ be a $G$-Tambara field. Then $k$ is coinduced from a clarified Tambara field $\ell$, i.e. a field-like Tambara functor whose $G/e$ level is a field.
\end{corollary}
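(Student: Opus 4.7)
The plan is to reduce the corollary to an analysis of the $G$-ring $k(G/e)$ alone, invoke the detection principle \cref{prop:coinduction-is-detected-at-bottom}, and then verify that the resulting $H$-Tambara functor is clarified. In three steps: (1) show $k(G/e) \cong \mathrm{Coind}_H^G F$ as $G$-rings for some subgroup $H \subset G$ and field $F$ with $H$-action, (2) apply \cref{prop:coinduction-is-detected-at-bottom} to produce an $H$-Tambara functor $\ell$ with $\ell(H/e) \cong F$ such that $k \cong \mathrm{Coind}_H^G \ell$, and (3) verify $\ell$ is clarified field-like.

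For step (1), I would exploit the fact that the field-like hypothesis obstructs both $G$-equivariant product decompositions of $k(G/e)$ and certain nontrivial ideals. By \cref{prop:bottom-level-reflects-products}, any $G$-equivariant splitting $k(G/e) \cong S_1 \times S_2$ lifts to a Tambara functor splitting of $k$; the kernel of either projection is a Tambara ideal, which by field-likeness must be $0$ or $k$. Thus $k(G/e)$ has no nontrivial $G$-fixed idempotents. Given any nonzero idempotent $d \in k(G/e)$ with stabilizer $H$, the $G$-fixed idempotent $1 - \prod_{g \in G/H}(1 - gd)$ is forced to equal $1$, and applying the same principle to $G$-fixed idempotent combinations built from $d \cdot gd$ forces the $G$-translates of $d$ to be pairwise orthogonal. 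This identifies the $G$-orbit of $d$ as a complete set of orthogonal idempotents and yields $k(G/e) \cong \mathrm{Coind}_H^G (d \cdot k(G/e))$. Running the field-like obstruction inside the $H$-ring $d \cdot k(G/e)$ — any nontrivial $H$-invariant ideal or idempotent there would, after coinducing, produce forbidden structure in $k(G/e)$ and hence, via \cref{prop:bottom-level-reflects-products}, in $k$ — shows that this factor is a field $F$.

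For steps (2) and (3), \cref{prop:coinduction-is-detected-at-bottom} immediately upgrades the $G$-ring isomorphism of step (1) to a Tambara functor isomorphism $k \cong \mathrm{Coind}_H^G \ell$ with $\ell(H/e) \cong F$. That $\ell$ is clarified is then immediate from the definition: $\ell(H/e) = F$ is a field, so its only idempotents are $0$ and $1$, both of which have full $H$-isotropy, leaving no idempotent of proper-subgroup type. That $\ell$ is itself field-like follows because any Tambara ideal $I \subset \ell$ coinduces to a Tambara ideal of $\mathrm{Coind}_H^G \ell = k$, which must be $0$ or $k$; restricting back shows $I$ was $0$ or $\ell$.

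The hard part will be step (1), namely producing the coinduced $G$-ring structure on $k(G/e)$ without a Noetherian hypothesis, since this blocks a direct appeal to \cref{prop:product-splitting-of-G-rings}. The crucial lever is \cref{prop:bottom-level-reflects-products}, which converts the Tambara-ideal-theoretic statement of field-likeness into a purely ring-theoretic rigidity of $k(G/e)$ strong enough to build the coinduction by hand.
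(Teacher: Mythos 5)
Your overall plan matches the paper's: both proofs pivot on \cref{prop:coinduction-is-detected-at-bottom}, applied once the $G$-ring $k(G/e)$ is known to be of the form $\mathrm{Coind}_H^G F$ for a field $F$. The paper simply cites this ring-level fact as a consequence of Nakaoka's criteria for field-like Tambara functors (attributed to Chan and Spitz), whereas you attempt to re-derive it from scratch. That is a reasonable ambition, but your step~(1) has a genuine gap.

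The problem is the claim that, for an arbitrary nonzero idempotent $d$ with stabilizer $H$, ``applying the same principle to $G$-fixed idempotent combinations built from $d\cdot gd$ forces the $G$-translates of $d$ to be pairwise orthogonal.'' This is false for an arbitrary $d$. Take $G = C_3$ acting cyclically on $F^3 = \mathrm{Coind}_e^{C_3}F$ and $d = (1,1,0)$: its stabilizer is trivial, $\prod_{g}(1-gd) = 0$, yet $d\cdot\sigma d = (0,1,0)\neq 0$. So the orbit of $d$ is not orthogonal, even though the ring is coinduced. The absence of nontrivial $G$-fixed idempotents (which you correctly extract from field-likeness via \cref{prop:bottom-level-reflects-products}) does not by itself single out a ``type $H$'' idempotent; one needs either a minimality argument (blocked without a Noetherian hypothesis, which is precisely why \cref{prop:product-splitting-of-G-rings} cannot be invoked) or a different starting point. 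The standard route is to begin from a maximal ideal $\mathfrak{m}$ of $k(G/e)$: the intersection $\bigcap_{g\in G} g\mathfrak{m}$ is a proper $G$-invariant ideal, and one can check that for a field-like $k$ the assignment $G/H \mapsto \{x : \mathrm{Res}_e^H x \in I\}$ is a Tambara ideal for any $G$-invariant ideal $I$ of $k(G/e)$, so $\bigcap_g g\mathfrak{m} = 0$; the Chinese remainder theorem then exhibits $k(G/e)$ as a finite product of fields transitively permuted by $G$. A second, smaller gap: even granting a complete orthogonal orbit, concluding that the factor $d\cdot k(G/e)$ is a \emph{field} requires ruling out arbitrary nonzero non-units, not just idempotents, so idempotent manipulation alone cannot close step~(1).

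Your step~(3) argument for field-likeness of $\ell$ (coinduce a Tambara ideal of $\ell$, use that $k$ is field-like, restrict back) is sound in spirit but quietly appeals to the fact that coinduction carries Tambara ideals to Tambara ideals and detects zero and the unit ideal, which is only established much later in the paper (\cref{prop:coinduction-induces-bijection-on-Tambara-ideals}). The paper's route is shorter: restrictions in $k$ are injective by Nakaoka's criterion, hence so are those in $\ell$, and $\ell(H/e)$ is a field, so $\ell$ is field-like by the same criterion.
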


\begin{proof}
	If $k$ is any field-like Tambara functor, then $k(G/e) \cong \mathrm{Coind}_H^G \F$ for some field $\F$ with an action of $H$ (as observed by David Chan and Ben Spitz, this is a straightforward consequence of Nakaoka's criteria \cite{Nak11a} and the Chinese remainder theorem). Therefore $k \cong \mathrm{Coind}_H^G \ell$ for some $H$-Tambara functor $\ell$ with $\ell(H/e) \cong \F$. Since the restrictions in $k$ are all injective, the restrictions in $\ell$ are all injective. Thus by Nakaoka's criteria, $\ell$ is field-like. It is visibly clarified.
\end{proof}

We may now prove the main theorem of this section.

\begin{theorem}\label{thm:product-decomposition}
	Let $R$ be a $G$-Tambara functor such that either the ring $R(G/e)$ is Noetherian. Then 
    \[ 
        R \cong \prod_H \mathrm{Coind}_H^G R_H 
    \] 
    where $R_H$ is a clarified $H$-Tambara functor, and at most one representative from each conjugacy class of subgroups of $G$ appears in the indexing set for the product.
\end{theorem}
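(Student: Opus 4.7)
The plan is to reduce case (1) to the $G$-ring splitting of \cref{prop:product-splitting-of-G-rings} by means of the two bottom-level detection results proved above, and to obtain case (2) directly from the domain-like characterization \cref{prop:domain-like-iff-coind-from-clarified-domain-like}, which already produces a single-factor coinduction.

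For case (1), I would proceed in three steps. First, apply \cref{prop:product-splitting-of-G-rings} to the Noetherian $G$-ring $R(G/e)$ to obtain a $G$-ring isomorphism
\[ R(G/e) \cong \prod_H \mathrm{Coind}_H^G S_H \]
with each $S_H$ a clarified $H$-ring, indexed by one representative from each conjugacy class of subgroups of $G$. If the raw decomposition given by that proposition contains several factors at the same $H$, one combines them using $\mathrm{Coind}_H^G S \times \mathrm{Coind}_H^G S' \cong \mathrm{Coind}_H^G (S \times S')$, and conjugate subgroups are collapsed to a common representative. Second, I would iterate \cref{prop:bottom-level-reflects-products} (the extension from two factors to finitely many is a routine induction on the number of idempotents) to lift this bottom-level splitting to a decomposition of Tambara functors $R \cong \prod_H R^{(H)}$ with $R^{(H)}(G/e) \cong \mathrm{Coind}_H^G S_H$. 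Third, apply \cref{prop:coinduction-is-detected-at-bottom} to each factor $R^{(H)}$ to produce an $H$-Tambara functor $R_H$ together with an isomorphism $R^{(H)} \cong \mathrm{Coind}_H^G R_H$.

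To finish, I would check that each $R_H$ is clarified. Since $R^{(H)} \cong \mathrm{Coind}_H^G R_H$, we have $\mathrm{Coind}_H^G R_H(H/e) \cong \mathrm{Coind}_H^G S_H$ as $G$-rings; restricting along $H \subset G$ and projecting onto the identity double coset factor of the splitting of \cref{lem:arbitrary-res-of-coind-of-Gring} identifies $R_H(H/e) \cong S_H$, which is a clarified $H$-ring by construction. The substantive work has already been absorbed into \cref{prop:bottom-level-reflects-products} and \cref{prop:coinduction-is-detected-at-bottom}, both of which rely essentially on the exponential-formula consequences of \cref{subsec:consequences-of-exp-formula}; the remaining work in the present proof is essentially bookkeeping (combining factors so that the indexing satisfies the conjugacy-class condition). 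The main obstacle is therefore not located at this stage of the argument but upstream, in the earlier lemmas controlling idempotents and norms.
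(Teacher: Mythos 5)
Your proposal follows the paper's own proof essentially step for step: split $R(G/e)$ via \cref{prop:product-splitting-of-G-rings}, lift the splitting to a Tambara product via iterated application of \cref{prop:bottom-level-reflects-products}, identify each factor as a coinduction via \cref{prop:coinduction-is-detected-at-bottom}, and defer the domain-like case to \cref{prop:domain-like-iff-coind-from-clarified-domain-like}. One small remark on the last step: the clean way to conclude $R_H(H/e)\cong S_H$ (and hence that $R_H$ is clarified) is to use the strengthened form of \cref{prop:coinduction-is-detected-at-bottom} recorded in Theorem~A, where the output $H$-Tambara functor $T$ is promised to satisfy $T(H/e)\cong S$; an abstract $G$-ring isomorphism $\mathrm{Coind}_H^G A \cong \mathrm{Coind}_H^G B$ need not respect the identity double coset factors, so "restrict and project" is not automatic without recalling that the isomorphism of \cref{lem:bottom-few-levels-are-coinduced} is constructed exactly so that the identity double coset projection picks out $\ell_e$ with $\ell_e(H/e)\cong S_H$. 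This is a bookkeeping point only; the argument and conclusion are correct.
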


\begin{proof}
	\cref{prop:product-splitting-of-G-rings} implies that we have a $G$-ring isomorphism \[ R(G/e) \cong \prod_H \mathrm{Coind}_H^G S_H \] where $S_H$ is a clarified $H$-ring. \cref{prop:bottom-level-reflects-products} implies \[ R \cong \prod_H T_H \] where $T_H$ is a $G$-Tambara functor such that $T_H(G/e) \cong \mathrm{Coind}_H^G S_H$. Finally, \cref{prop:coinduction-is-detected-at-bottom} implies \[ T_H \cong \mathrm{Coind}_H^G R_H \] for some $H$-Tambara functor $R_H$. Since $R_H(H/e) \cong S_H$, we see that $R_H$ is clarified.
	
	Suppose $H$ and $H'$ are conjugate. Then $\mathrm{Coind}_H^G R_H \cong \mathrm{Coind}_{H'}^G \mathrm{Res}_{H'}^H R_H$ where the restriction is along a $G$-set isomorphism $G/H \cong G/H'$. Since $R_H$ is clarified if and only if $\mathrm{Res}_{H'}^H R_H$ is, and coinduction commutes with products, we may rearrange and combine terms so that at most one representative from each conjugacy class of subgroups of $G$ appears in our product decomposition.
\end{proof}

This isomorphism is not canonical because it ultimately depends on some choices of idempotents in $R(G/e)$. In particular, it depends on the non-uniqueness of the decomposition of \cref{prop:product-splitting-of-G-rings}. Morally, \cref{prop:product-thm-diagonalizes} says that this is the only non-uniqueness, at least up to the difference between a constant presheaf and a locally constant sheaf.

We close this section with the observation that \cref{thm:product-decomposition} applies to most Tambara functors that arise in nature.

\begin{proposition}\label{prop:main-thm-applies-to-fp-over-noeth-base}
	Let $k$ be a Tambara functor with $k(G/e)$ a Noetherian ring. Then every finitely presented $k$-algebra is a product of coinductions of clarified $k$-algebras.
\end{proposition}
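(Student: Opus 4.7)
The plan is to reduce to \cref{thm:product-decomposition} by establishing that $R(G/e)$ is Noetherian whenever $R$ is a finitely presented $k$-algebra, and then to promote the resulting decomposition to one of $k$-algebras by adjunction.

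By \cref{def:free-poly-algs}, a finitely presented $k$-algebra is a quotient of a free polynomial $k$-algebra $k[x_{X_1}, \ldots, x_{X_n}] := k[x_{X_1}] \boxtimes_k \cdots \boxtimes_k k[x_{X_n}]$ on finitely many generators indexed by finite $G$-sets $X_i$, modulo a finitely generated Tambara ideal. Since the $G/e$-level functor is exact, $R(G/e)$ is a quotient of $k[x_{X_1}, \ldots, x_{X_n}](G/e)$ by a finitely generated ideal, so it suffices to show that $k[x_X](G/e)$ is a finitely generated $k(G/e)$-algebra for any finite $G$-set $X$, and then invoke Hilbert's basis theorem.

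Next I would compute $k[x_X](G/e)$. Using the description $k[x_X] \cong k \boxtimes \mathrm{bispan}(X, -)$ from Yoneda, together with the fact that the box product at the $G/e$-level reduces to the ordinary tensor product of the underlying $G$-modules, this reduces to identifying $\mathrm{bispan}(X, G/e)$. A bispan $X \leftarrow A \to B \to G/e$ must have $B$ a disjoint union of copies of $G/e$ (the only transitive $G$-sets admitting a map to $G/e$), and no transfers or nontrivial norms enter the $G/e$-level, since the trivial group has no proper subgroups. Unwinding the definitions, $k[x_X](G/e) \cong k(G/e)[y_x : x \in X]$ is a polynomial ring over $k(G/e)$ on $|X|$ generators, realized concretely as the $G$-conjugates of the restrictions of the universal element $x_X$ along the orbit inclusions. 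In particular it is finitely generated.

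Having verified that $R(G/e)$ is Noetherian, \cref{thm:product-decomposition} yields $R \cong \prod_H \mathrm{Coind}_H^G R_H$ with each $R_H$ a clarified $H$-Tambara functor. To upgrade this to a decomposition of $k$-algebras, note that the composition $k \to R \to \mathrm{Coind}_H^G R_H$ corresponds under the $\mathrm{Res}_H^G \dashv \mathrm{Coind}_H^G$ adjunction to a structure map $\mathrm{Res}_H^G k \to R_H$, equipping each $R_H$ with the structure of a clarified $\mathrm{Res}_H^G k$-algebra, so that each factor $\mathrm{Coind}_H^G R_H$ is the coinduction of a clarified algebra in the sense of \cref{cor:coind-gives-equiv-of-Tamb-algebra-cats}. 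The main obstacle is the concrete identification of $k[x_X](G/e)$ as a polynomial ring: one must verify that the bispan unpacking really produces a polynomial ring on $|X|$ generators with no hidden relations. What makes this tractable is precisely the degenerate nature of the $G/e$-level—the absence of any incoming transfers or nontrivial norms reduces the Tambara structure to ordinary multiplication plus the $G$-action, and the resulting computation is essentially a free-algebra version of the bispan description already visible in \cref{ex:Burnside-Tamb-functor}.
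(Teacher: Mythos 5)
Your proposal is correct and follows essentially the same route as the paper: establish that $R(G/e)$ is finitely presented over $k(G/e)$, invoke Hilbert's basis theorem to conclude $R(G/e)$ is Noetherian, and then apply \cref{thm:product-decomposition}. The one substantive difference is that you re-derive the identification $k[x_X](G/e) \cong k(G/e)[X]$ from the bispan description, whereas the paper simply cites Brun (\cite[Theorem A]{Bru05}) together with the fact that the box product at the $G/e$ level is the tensor product. Your bispan sketch has the right ideas (over $G/e$, the legs $A \to B \to G/e$ force $A$ and $B$ to be free $G$-sets, so the norm and transfer degenerate to iterated multiplication and addition), though the phrasing ``no transfers or nontrivial norms enter the $G/e$-level'' is slightly off: transfers and norms do appear, but along fold maps between free orbits, where they collapse to $+$ and $\times$. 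Re-deriving Brun is harmless but unnecessary; citing it avoids the extra bookkeeping entirely, since verifying there are ``no hidden relations'' in the bispan unpacking is precisely the content of Brun's theorem. Your final paragraph, promoting the Tambara-functor decomposition to a decomposition of $k$-algebras via the $\mathrm{Res}_H^G \dashv \mathrm{Coind}_H^G$ adjunction, makes explicit something the paper leaves implicit and is a welcome clarification.
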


The Noetherian assumption is satisfied for example by the Burnside Tambara functor, any field-like Tambara functor, and any constant Tambara functor associated to a Noetherian ring. Essentially every Tambara functor that arises in nature satisfies this condition.

\begin{proof}
	Recall from \cite{SSW24} that a finitely presented $k$-algebra is precisely a coequalizer of two maps $k[X] \rightrightarrows k[Y]$ of free polynomial $k$-algebras (cf. \cref{def:free-poly-algs}). Observe from \cite{Bru05} and the fact that the $G/e$ level of the box product is given by the tensor product that we have an identification $k[X](G/e) \cong k(G/e)[X]$. Since $R \mapsto R(G/e)$ preserves colimits, the bottom level of this coequalizer is a finitely presented $k(G/e)$-algebra. By Hilbert's basis theorem any finitely presented algebra over a Noetherian ring is Noetherian, so that \cref{thm:product-decomposition} applies.
\end{proof}

There has not been much study towards the Noetherian condition for Tambara functors, by which we mean the ascending chain condition for Tambara ideals (although see \cite{CW25} for some related results). It seems possible that a Tambara functor is Noetherian if and only if each level is Noetherian. At the very least, one would hope that if $k$ is Noetherian, then so is $k(G/e)$.
%%%%%%%%%%%%%%%%%%%%%%%
\section{Clarification}
%%%%%%%%%%%%%%%%%%%%%%%

By \cref{thm:product-decomposition} every (sufficiently small) Tambara functor is a product of coinductions from proper subgroups, along with a clarified factor. One therefore expects projection onto the clarified factor to describe an interesting functor. Indeed this is the case; we will see in this section that this projection describes the subcategory of clarified Tambara functors as a reflective localization of the category of all Tambara functors.

We start with an analysis of morphisms out of Tambara functors, extracting some consequences of \cref{thm:product-decomposition}. Then we define the various clarification functors and show that they describe reflective localizations of categories of Tambara functors. Finally, we analyze the behavior of colimits in the category of clarified Tambara functors.

\subsection{Morphisms between coinduction and clarification}

\cref{thm:product-decomposition} gives a description of an (almost) arbitrary $G$-Tambara functor which is very amenable to receiving maps. In particular, mapping into a product is well-understood, and since coinduction is a right adjoint, mapping into coinductions is well-understood, especially since the left adjoint, restriction, admits a particularly simple description. On the other hand, surprisingly, we are able to establish some control on maps out of $G$-Tambara functors. As in the proof of \cref{thm:product-decomposition}, our analysis begins with an analysis of maps of $G$-rings.

\begin{lemma}\label{lem:idempotent-orbits-analysis}
	Let $f : R \rightarrow S$ be a morphism of $G$-rings and let $d$ be a type $H$ idempotent. Then $f(d)$ is either zero or a type $H$ idempotent.
\end{lemma}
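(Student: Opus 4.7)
The plan is to verify the three defining features of being type $H$ (idempotency, isotropy exactly $H$, and orthogonality to non-$H$-translates) for $f(d)$, assuming $f(d)$ is nonzero. Two of these are immediate from the fact that $f$ is a $G$-equivariant ring map, and the interesting content is confined to showing that the isotropy of $f(d)$ cannot be strictly larger than $H$.

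First I would observe that $f(d)^2 = f(d^2) = f(d)$, so $f(d)$ is always an idempotent; and for any $g \in H$, $G$-equivariance gives $g \cdot f(d) = f(gd) = f(d)$, so the isotropy of $f(d)$ contains $H$. Next, for any $g \notin H$, using the type $H$ hypothesis $d \cdot gd = 0$ and the fact that $f$ is a $G$-equivariant ring map yields
\[
    f(d) \cdot g f(d) = f(d) \cdot f(g d) = f(d \cdot g d) = f(0) = 0,
\]
so the orthogonality condition transfers directly.

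The main (and only mildly tricky) step is to rule out the possibility that the isotropy of $f(d)$ is strictly larger than $H$ when $f(d) \neq 0$. Suppose for contradiction that some $g \notin H$ satisfies $g \cdot f(d) = f(d)$, equivalently $f(gd) = f(d)$. Combining this with the orthogonality computation above gives
\[
    f(d) = f(d)^2 = f(d) \cdot f(gd) = f(d \cdot gd) = 0,
\]
contradicting $f(d) \neq 0$. Thus if $f(d)$ is nonzero, its isotropy is exactly $H$, and together with the previous paragraph this shows $f(d)$ is a type $H$ idempotent. No obstacles are expected: the argument is a direct unwinding of definitions, with the only subtlety being the bookkeeping that the isotropy might a priori increase under a ring map, which is resolved by squaring $f(d)$ and invoking orthogonality.
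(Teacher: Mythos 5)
Your proof is correct and takes essentially the same approach as the paper's: both hinge on the observation that $f(d)\cdot gf(d)=0$ for $g\notin H$, combined with nonzeroness, to force the isotropy down to exactly $H$. The paper phrases the final step by counting that the orbit of $f(d)$ has at least $|G/H|$ elements, whereas you argue directly by contradiction (squaring $f(d)$), but these are two phrasings of the same idea.
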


\begin{proof}
	If $f(d)$ is nonzero, then the $G$-orbits of $f(d)$ are mutually orthogonal nonzero idempotents. Since $f$ is $G$-equivariant, the isotropy of $f(d)$ contains $H$, and since mutually orthogonal nonzero idempotents are distinct, there are at least $|G/H|$ elements in the $G$-orbit of $f(d)$. Thus the isotropy group is precisely $H$.
\end{proof}

\begin{lemma}\label{lem:no-Gring-maps-which-decrease-coinduction}
	Let $f : \mathrm{Coind}_H^G R_H \rightarrow \mathrm{Coind}_K^G R_K$ be a map of $G$-rings, where $R_K$ is a nonzero clarified $K$-ring. Then $K$ is subconjugate to $H$ in $G$.
\end{lemma}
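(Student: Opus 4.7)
The plan is to propagate a canonical type $H$ idempotent through the map $f$ and then exploit the clarified hypothesis on $R_K$ at the level of a single coordinate.

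The first step is to produce a distinguished type $H$ idempotent in the domain. Identifying $\mathrm{Coind}_H^G R_H$ with $\mathrm{Fun}(G/H, R_H)$, take $d_H$ to be the element whose value at the identity coset $eH$ is $1 \in R_H$ and which is zero at every other coset. A direct unpacking of the $G$-action on $\mathrm{Coind}_H^G R_H$ shows that the isotropy of $d_H$ is exactly $H$ and that $d_H$ is orthogonal to each of its nontrivial $G$-translates; its $G$-orbit therefore forms a complete orthogonal family of idempotents summing to $1$. Applying $f$ gives $\sum_{gH \in G/H} g \cdot f(d_H) = f(1) = 1$. Since $R_K$ is nonzero we have $1 \neq 0$ in $\mathrm{Coind}_K^G R_K$, so $f(d_H) \neq 0$, and \cref{lem:idempotent-orbits-analysis} implies $e := f(d_H)$ is a nonzero type $H$ idempotent in $\mathrm{Coind}_K^G R_K$.

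The second step is a local analysis of $e$. Choose coset representatives for $G/K$ and write $e = (e_{gK})_{gK \in G/K}$. Since $e \neq 0$, we may pick $g_0 K$ with $f_0 := e_{g_0 K} \neq 0$. Set $L := g_0^{-1} H g_0 \cap K \subseteq K$. A direct computation using the explicit description of the $G$-action on $\mathrm{Coind}_K^G R_K$ shows that for any $\gamma \in g_0 K g_0^{-1}$ (equivalently $k := g_0^{-1} \gamma g_0 \in K$), the component $(\gamma \cdot e)_{g_0 K}$ equals $k \cdot f_0$. Splitting into the cases $\gamma \in H \cap g_0 K g_0^{-1}$ and $\gamma \in g_0 K g_0^{-1} \setminus H$, the defining properties of a type $H$ idempotent translate into two assertions on $f_0$: namely $k f_0 = f_0$ for every $k \in L$, and $f_0 \cdot k f_0 = 0$ for every $k \in K \setminus L$. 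Together these say that $f_0$ is a type $L$ idempotent in the $K$-ring $R_K$.

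The third step is the conclusion: since $R_K$ is clarified, it contains no type $L$ idempotents for $L$ a proper subgroup of $K$, so we must have $L = K$. This means $K \subseteq g_0^{-1} H g_0$, equivalently $g_0 K g_0^{-1} \subseteq H$, which is precisely the assertion that $K$ is subconjugate to $H$ in $G$. The main obstacle is the second step: carefully tracking the $G$-action on $\mathrm{Coind}_K^G R_K$, which twists by the $K$-action on $R_K$ as elements of $G$ permute cosets, and extracting the clean local conditions on $f_0$ from the global type $H$ conditions on $e$. Once the identity $(\gamma e)_{g_0 K} = k f_0$ is in hand, the rest follows immediately from the definitions.
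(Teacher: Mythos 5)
Your argument is correct and takes a genuinely different route from the paper's. The paper's proof passes to the adjoint $K$-ring morphism $\mathrm{Res}_K^G \mathrm{Coind}_H^G R_H \to R_K$, invokes the Mackey-type decomposition of \cref{lem:arbitrary-res-of-coind-of-Gring} to split the source over double cosets, and argues that the clarified hypothesis forces the map to factor through a factor with $K \cap {}^gH = K$, which must exist since $R_K \neq 0$. You instead stay entirely at the $G$-ring level: you pick out the canonical type $H$ idempotent $d_H$ in the domain, use $\sum_{gH} g f(d_H) = f(1) = 1 \neq 0$ to force $f(d_H) \neq 0$, upgrade this to a type $H$ idempotent via \cref{lem:idempotent-orbits-analysis}, and then isolate a single coordinate $f_0 = e_{g_0K}$ to exhibit a type $L$ idempotent in $R_K$ with $L = g_0^{-1}Hg_0 \cap K$. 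One small point worth making explicit: the two local conditions you derive (fixity under $L$, orthogonality under $K \setminus L$) together with $f_0 \neq 0$ do pin down the isotropy as exactly $L$, since any $k \in K \setminus L$ fixing $f_0$ would give $f_0 = f_0 \cdot kf_0 = 0$; so $f_0$ is genuinely type $L$ and not merely $L$-fixed. Both proofs rest on the same kernel observation — that the image of a type-$L$ idempotent in a clarified $K$-ring must vanish unless $L = K$ — but yours avoids \cref{lem:arbitrary-res-of-coind-of-Gring} entirely at the cost of a hands-on computation with the twisted $G$-action on $\mathrm{Fun}(G/K, R_K)$, while the paper's is shorter once the Mackey formula is already in hand.
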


\begin{proof}
	Note that $f$ is adjoint to a morphism \[ \mathrm{Res}_K^G \mathrm{Coind}_H^G R_H \rightarrow R_K \] and recall the isomorphism \[ \mathrm{Res}_K^G \mathrm{Coind}_H^G R_H \cong \prod_{g \in K \backslash G / H} \mathrm{Coind}_{K \cap {}^g H}^K \mathrm{Res}_{K \cap {}^g H}^{{}^g H} {}^g R \] of \cref{lem:arbitrary-res-of-coind-of-Gring}. Since every idempotent of $R_K$ is fixed, our morphism factors through projection onto the factors for which $K \cap {}^g H = K$. If no such factors exist, then our map factors through zero, contradicting the assumption that $R_K$ is nonzero. Therefore there exists $g \in G$ such that ${}^g H$ contains $K$, or, equivalently, $K$ is subconjugate to $H$ in $G$.
\end{proof}

Give the set $\mathrm{Sub}(G)$ of subgroups of $G$ a partial order under the relation $H \leq H'$ if a conjugate of $H$ is a subgroup of $H'$.

\begin{definition}
	Let $\Lambda$ be an upward closed subset of $\mathrm{Sub}(G)$. A $G$-ring is \emph{$\Lambda$-clarified} if whenever it contains an idempotent of type $L$, we have $L \in \Lambda$. A $G$-Tambara functor is $\Lambda$-clarified if its bottom level is a $\Lambda$-clarified $G$-ring.
\end{definition}

The upward closure property is essentially forced. One may show that if a $G$-ring $R$ contains an idempotent of type $H$, then it contains an idempotent of type $K$ whenever $H$ is subconjugate to $K$. Products of the rings $\mathrm{Coind}_H^G \Z$ show that every upward closed subset of $\mathrm{Sub}(G)$ is realized as the set of subgroups of $G$ for which there exists an idempotent of that type.

If $H$ is a subgroup of $G$, write $\Lambda_H$ upward closure of $H$ in $\mathrm{Sub}(G)$. A $G$-ring is clarified if and only if if is $\Lambda_G$-clarified. On the other extreme, every $G$-ring is $\Lambda_e$-clarified. Any $G$-ring of the form $\mathrm{Coind}_H^G R$ for $R$ a clarified $H$-ring is $\Lambda_H$-clarified by \cref{cor:best-clarification-bound-on-coind-of-clar} below. One important example of $\Lambda$-clarified Tambara functors are the free polynomial algebras over a $\Lambda$-clarified base (cf. \cref{def:free-poly-algs}).

\begin{lemma}\label{lem:free-poly-k-algs-are-clarified}
	Let $k$ be a $\Lambda$-clarified $G$-Tambara functor. All free polynomial $k$-algebras on finitely many generators are $\Lambda$-clarified.
\end{lemma}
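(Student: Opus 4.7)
The plan is to identify the bottom level $k[X](G/e)$ explicitly as a polynomial ring over $k(G/e)$ with a $G$-action combining the action on the base and a permutation of variables, and then invoke the fact that polynomial extensions do not introduce new idempotents.

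First I would reduce to a single orbit. Since $k[X \sqcup Y] \cong k[X] \boxtimes_k k[Y]$ and the $G/e$-level of a box product over $k$ is the tensor product over $k(G/e)$, if the result is established for $X = G/H$ with bottom level a polynomial ring on $|G/H|$ variables, then for a general $X = \sqcup_i G/H_i$ we obtain $k[X](G/e) \cong k(G/e)[y_x : x \in X]$ $G$-equivariantly, with $G$ acting on the base as usual and permuting the variables according to the $G$-set structure of $X$. For the single-orbit case, I would unwind the bispan description: a morphism $G/H \leftarrow A \to B \to G/e$ in $\mathrm{bispan}(G\text{-}\Set^{\mathrm{fin}})$ forces $B$ and $A$ to be free $G$-sets, and the combinatorial data reduces to a formal $\Z_{\geq 0}$-linear combination of monomials in variables naturally indexed by $G/H$. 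This identifies $\mathcal{A}[x_{G/H}](G/e)$ with $\Z[y_{gH}: gH \in G/H]$, and tensoring with $k(G/e)$ yields $k[x_{G/H}](G/e) \cong k(G/e)[y_{gH}: gH \in G/H]$ equivariantly.

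Next I would invoke the standard fact that for any commutative ring $R$, the inclusion $R \hookrightarrow R[y_1,\ldots,y_n]$ induces a bijection on idempotents. (This follows from the observation that the fibers of $\mathbb{A}^n_R \to \mathrm{Spec}(R)$ are connected affine spaces over the residue fields, so every clopen subset of $\mathrm{Spec}(R[y_1,\ldots,y_n])$ is the pullback of a clopen subset of $\mathrm{Spec}(R)$.) Applied to $R = k(G/e)$, this shows that every idempotent $d$ of $k[X](G/e)$ already lies in $k(G/e)$. Since the inclusion $k(G/e) \hookrightarrow k[X](G/e)$ is $G$-equivariant, the type of such a $d$ is the same whether computed in $k(G/e)$ or in $k[X](G/e)$. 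Therefore every type-$L$ idempotent of $k[X](G/e)$ is a type-$L$ idempotent of $k(G/e)$; by the $\Lambda$-clarified hypothesis on $k$, this forces $L \in \Lambda$, proving that $k[X]$ is $\Lambda$-clarified.

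The main technical obstacle is the explicit polynomial-ring description of $k[x_{G/H}](G/e)$. Once this is in hand, the rest is formal: the polynomial-ring idempotent lemma is classical, and its $G$-equivariance is automatic from the construction. I expect no serious difficulty in either piece; the only delicate bookkeeping lies in converting a bispan $G/H \leftarrow A \to B \to G/e$ into a monomial, and this is essentially the content of the computation of the free polynomial algebra in the bottom level already used implicitly in \cite{Bru05} and \cite{BH18}.
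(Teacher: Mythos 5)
Your argument is correct and is essentially the paper's proof, just spelled out in more detail: both identify $k[X](G/e)$ as a polynomial ring over $k(G/e)$ (via Brun's Theorem~A and the fact that the box product is the tensor product on the bottom level), both observe that polynomial extensions introduce no new idempotents, and both conclude via $G$-equivariance of the inclusion $k(G/e) \hookrightarrow k[X](G/e)$. Your Spec-connectedness justification of the idempotent fact and your explicit check that the \emph{type} of an idempotent (isotropy and orthogonality) is preserved under a $G$-equivariant ring inclusion are useful clarifications of what the paper compresses into the phrase ``for degree reasons.''
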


\begin{proof}
	Every free polynomial Tambara functor on finitely many generators has level $G/e$ given by a finitely generated free polynomial $\mathbb{Z}$-algebra according to \cite[Theorem A]{Bru05}. The free polynomial $k$-algebras are obtained by base-changing the free polynomial Tambara functors along the unit map from the Burnside Tambara functor to $k$.
	
	Since the bottom level of the box product is the tensor product, we see that every free polynomial $k$-algebra on finitely many generators is has bottom level given by a free polynomial $k(G/e)$-algebra on finitely many polynomial generators. Any idempotent of such a ring must belong to $k(G/e)$ itself for degree reasons, hence must be fixed by the appropriate $H$.
\end{proof}

Note that \cref{def:free-poly-algs} as given in \cite{BH18} is given actually for incomplete Tambara functors. We would hope for \cref{lem:free-poly-k-algs-are-clarified} to be true for bi-incomplete Tambara functors, although an investigation of this would take us too far afield.

Next, we deduce from our results concerning the behavior of idempotents under equivariant ring maps some properties of maps between products of coinductions and $\Lambda$-clarified Tambara functors.

\begin{proposition}\label{prop:map-into-clarified-uniquely-factors-through-quotient}
	Any morphism of $G$-Green (resp. $G$-Tambara functors) $R \rightarrow S$ whose codomain is $\Lambda$-clarified factors uniquely through the quotient by the Green (res. Nakaoka) ideal generated by those idempotents in $R(G/e)$ of type $L$ for $L \in \Lambda$.
\end{proposition}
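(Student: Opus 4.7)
My plan is to use Lemma~\ref{lem:idempotent-orbits-analysis} to pin down which idempotents of $R(G/e)$ any morphism $f : R \to S$ into a $\Lambda$-clarified target is forced to annihilate, and then to realize the resulting factorization through the Green (respectively Nakaoka) ideal generated by these idempotents.

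First, I would examine $f$ at the bottom level, where it restricts to a morphism of $G$-rings $R(G/e) \to S(G/e)$. For any type-$L$ idempotent $d \in R(G/e)$, Lemma~\ref{lem:idempotent-orbits-analysis} forces $f(d)$ to be either $0$ or itself a type-$L$ idempotent of $S(G/e)$. Since $S$ is $\Lambda$-clarified, the second option is available only when $L$ lies in $\Lambda$; consequently every type-$L$ idempotent of $R(G/e)$ whose type falls outside $\Lambda$ must be annihilated by $f$. This identifies the set of generators of the ideal appearing in the statement as exactly those bottom-level idempotents that any $f$ into a $\Lambda$-clarified target is compelled to kill.

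Second, I would let $J \subseteq R$ be the Green (respectively Nakaoka) ideal generated by this collection of idempotents in $R(G/e)$. Since $f$ is a morphism of Green (respectively Tambara) functors and vanishes on a generating set of $J$ at the bottom level, it vanishes on all of $J$: products with elements of $R$, restrictions, transfers, conjugations, and (in the Tambara case) norms of elements of $J$ all remain in $J$ by construction, and $f$ intertwines every one of these structural maps. Therefore $f$ descends to a morphism $\bar f : R/J \to S$ factoring the canonical quotient map $R \to R/J$, and $\bar f$ is unique because this quotient is levelwise surjective, hence an epimorphism in the ambient abelian category.

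The step that requires the most care is the propagation of the vanishing condition from the chosen bottom-level generators up to the full ideal $J$ at every level. In the Green case this is routine, following from the closure axioms defining a Green ideal together with compatibility of $f$ with transfers, conjugations, and multiplication. The Tambara case adds one further observation: the Nakaoka ideal is closed under the norm operations by definition, and $f$ commutes with norms as part of being a Tambara morphism, so no new generators of the kernel appear at higher levels. Once this propagation is in hand, the factorization $\bar f$ and its uniqueness follow formally from the universal property of the quotient.
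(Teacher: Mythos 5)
Your proof is correct and takes essentially the same route as the paper, which deduces the proposition immediately from \cref{lem:idempotent-orbits-analysis}; you have simply spelled out the routine propagation (the kernel of $f$ is itself a Green, resp.\ Nakaoka, ideal containing the forced bottom-level idempotents, hence contains the ideal they generate) and the uniqueness via levelwise surjectivity of the quotient map (which makes it an epimorphism even though the ambient categories are not abelian). One remark: you read the generating set as the type-$L$ idempotents with $L \notin \Lambda$, which is the intended meaning and the one the paper itself uses when applying this proposition later, despite the statement's literal ``$L \in \Lambda$''.
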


\begin{proof}
	This is an immediate consequence of \cref{lem:idempotent-orbits-analysis}.
\end{proof}

\begin{proposition}\label{prop:characterization-of-lambda-clarified}
	Let $\Lambda$ be an upward closed subset of $\mathrm{Sub}(G)$. If a Tambara functor $R$ is isomorphic to \[ \prod_{L \in \Lambda} \mathrm{Coind}_L^G R_L \] for $R_L$ a clarified $L$-Tambara functor, then $R$ is $\Lambda$-clarified. The converse holds if $R(G/e)$ is Noetherian.
\end{proposition}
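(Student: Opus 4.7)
The plan is to handle the two implications separately, starting with the converse, which should be almost immediate from the main theorem. Assuming $R(G/e)$ is Noetherian and $R$ is $\Lambda$-clarified, \cref{thm:product-decomposition} yields $R \cong \prod_H \mathrm{Coind}_H^G R_H$ with each $R_H$ clarified; I would show that $R_H = 0$ whenever $H \notin \Lambda$. If instead $R_H$ were nonzero, then $R_H(H/e) \neq 0$ (since restriction is a unital ring map, so any nonzero Tambara functor has nonzero $H/e$-level), and the function $\delta_{eH} \in \mathrm{Coind}_H^G(R_H(H/e))$ that is $1$ at $eH$ and zero elsewhere is easily checked to be a type $H$ idempotent of that factor. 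Including it into the product gives a type $H$ idempotent of $R(G/e)$, contradicting $\Lambda$-clarification.

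For the forward direction, assume $R \cong \prod_{L \in \Lambda} \mathrm{Coind}_L^G R_L$ with each $R_L$ clarified, so $R(G/e) \cong \prod_L \mathrm{Coind}_L^G(R_L(L/e))$ as $G$-rings. Given a type $K$ idempotent $d = (d_L)_{L \in \Lambda}$, I would first argue that each nonzero component $d_L$ is itself type $K$ in its factor: its stabilizer automatically contains $K$, and any strict containment $\gamma \in \mathrm{stab}(d_L) \setminus K$ would give $\gamma d_L = d_L$, so the condition that $(\gamma d) \cdot d = 0$ at the $L$-component would force $d_L = d_L^2 = 0$. The orthogonality transfers coordinatewise. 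Hence everything reduces to the following key lemma: \emph{if $A$ is a clarified $L$-ring and $\mathrm{Coind}_L^G A$ contains a type $K$ idempotent, then $K$ contains a conjugate of $L$.} Granted this, upward closure of $\Lambda$ together with $L \in \Lambda$ yields $K \in \Lambda$.

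To establish the key lemma, I would realize $\mathrm{Coind}_L^G A = \mathrm{Fun}(G/L, A)$ with the twisted $G$-action of the excerpt, let $f$ be a type $K$ idempotent, and pick any coset $gL$ in its (nonempty) support. Setting $a := f(gL) \in A$, a nonzero idempotent, the stabilizer of $gL$ in $G$ is $gLg^{-1}$, and the action formula gives $(\kappa f)(gL) = (g^{-1}\kappa g) \cdot a$ for $\kappa \in gLg^{-1}$, where $g^{-1}\kappa g \in L$ acts through the $L$-structure on $A$. The condition $\kappa f = f$ for $\kappa \in K \cap gLg^{-1}$ then exactly says that $a$ is invariant under $g^{-1}Kg \cap L$, and the orthogonality $(\kappa f) \cdot f = 0$ for $\kappa \in gLg^{-1} \setminus K$ translates to $h \cdot a \cdot a = 0$ for all $h \in L \setminus g^{-1}Kg$. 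In other words, $a$ is a type $(g^{-1}Kg \cap L)$ idempotent of the $L$-ring $A$, and clarification of $A$ forces $g^{-1}Kg \cap L = L$, so $gLg^{-1} \subseteq K$.

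The main technical obstacle is the careful bookkeeping of the twisted $G$-action on $\mathrm{Coind}_L^G A$ restricted to the stabilizer $gLg^{-1}$ of the chosen coset, so as to extract a clean local type condition on the single idempotent $a \in A$. Once this reduction is in place, the clarified hypothesis on $A$ finishes the argument.
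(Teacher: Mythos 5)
Your proposal is correct and follows essentially the same route as the paper: the converse is obtained by applying \cref{thm:product-decomposition} and showing the factors indexed outside $\Lambda$ must vanish, and the forward direction reduces (as the paper does) to showing that a clarified $L$-ring's coinduction only contains type $K$ idempotents with $L$ subconjugate to $K$, which is your key lemma and is proven in the paper by essentially the same computation (the paper's $d_H\delta_L$ is your $a = f(gL)$ after normalizing $g = e$). The only cosmetic difference is that you work directly in $\mathrm{Fun}(G/L,A)$ and verify explicitly that $a$ is a type $(g^{-1}Kg\cap L)$ idempotent of $A$, whereas the paper phrases this via multiplication by the projection idempotent $\delta_L$.
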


\begin{proof}
	\cref{prop:map-into-clarified-uniquely-factors-through-quotient} applied to the identity implies the backwards direction (using \cref{thm:product-decomposition} and the Noetherian hypothesis). For the forwards direction, note that the class of $\Lambda$-clarified Tambara functors is closed under taking products. If $L \in \Lambda$, we must show $\mathrm{Coind}_L^G R_L$ is $\Lambda$-clarified whenever $R_L$ is clarified. In other words, the isotropy group of every idempotent of $\mathrm{Coind}_L^G R_L$ whose distinct orbits are mutually orthogonal is in $\Lambda$. Since $\Lambda$ is upward closed, it suffices to show that $L$ is subconjugate to the isotropy group of any such idempotent.
	
	Suppose $\mathrm{Coind}_L^G R_L$ contains an idempotent $d_H$ of type $H$. Additionally, let $\delta_L$ denote the type $L$ idempotent of $\mathrm{Coind}_L^G R_L$ corresponding to projection onto $R_L$. Note that $(g d_H) \delta_L$ is an idempotent of $R_L$, hence is fixed by $L$. Since $d_H \neq 0$, there exists $g \in G$ such that $g d_H \delta_L \neq 0$. Replacing $d_H$ with $g d_H$ at the cost of replacing $H$ with a conjugate, we may assume $d_H \delta_L \neq 0$. If $H$ does not contain $L$, then let $\gamma$ be an element of $L$ not in $H$. We compute $0 = (d_H \gamma d_H) \delta_L = (d_H \delta_L)(\gamma (d_H \delta_L))$ in $R_L$, so that $d_H \delta_L$ is a nonzero idempotent of $R_L$ which is not fixed by $\gamma$, contradicting the assumption that $R_L$ is clarified.
\end{proof}

\begin{corollary}\label{cor:best-clarification-bound-on-coind-of-clar}
	If $R_L$ is clarified, then $\mathrm{Coind}_L^G R_L$ is $\Lambda_L$-clarified. In other words, if $\mathrm{Coind}_L^G R_L$ contains an idempotent of type $H$, then $H$ contains a conjugate of $L$.
\end{corollary}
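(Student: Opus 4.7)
The plan is to deduce the corollary immediately from the forwards direction of Proposition~\ref{prop:characterization-of-lambda-clarified} by specializing to $\Lambda = \Lambda_L$. Since $L$ is contained in its own upward closure $\Lambda_L$, the Tambara functor $\mathrm{Coind}_L^G R_L$ may be written (trivially) as $\prod_{L' \in \Lambda_L} \mathrm{Coind}_{L'}^G R_{L'}$, where the $L$-indexed factor is the given clarified Tambara functor and the remaining factors are zero. Proposition~\ref{prop:characterization-of-lambda-clarified} then directly yields that $\mathrm{Coind}_L^G R_L$ is $\Lambda_L$-clarified, which is exactly what the corollary asserts.

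In fact the content of the corollary is precisely the step that does the real work inside the proof of Proposition~\ref{prop:characterization-of-lambda-clarified}: once one knows that $\Lambda$-clarified Tambara functors are closed under products, the forwards direction reduces to checking that each individual factor $\mathrm{Coind}_{L'}^G R_{L'}$ (with $L' \in \Lambda$ and $R_{L'}$ clarified) is $\Lambda$-clarified. Thus my plan is simply to cite Proposition~\ref{prop:characterization-of-lambda-clarified} and record this statement as a named corollary for convenient invocation later.

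If one preferred a self-contained one-line proof rather than a citation, the argument to repeat is the familiar one: given an idempotent $d_H$ of type $H$ in $\mathrm{Coind}_L^G R_L$, multiply by the projection idempotent $\delta_L$ onto the $R_L$-factor; since the $G$-translates of $d_H$ are orthogonal idempotents summing to $1$, some translate has $(gd_H)\delta_L \neq 0$, giving (after replacing $H$ by a conjugate) a nonzero idempotent of $R_L$; then clarifiedness of $R_L$ together with the type-$H$ orthogonality relations forces $L \subset H$. No step here presents any genuine obstacle, because the computation has already been executed inside the proof of Proposition~\ref{prop:characterization-of-lambda-clarified}.
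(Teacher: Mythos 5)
Your primary approach is exactly the paper's: the corollary is stated without proof because it is a restatement of the key step established inside the forwards direction of \cref{prop:characterization-of-lambda-clarified}, and citing that proposition (or, as you note, citing the observation that its proof already handles each coinduced factor individually) is what is intended. The proposal is correct.

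One small slip in the optional self-contained sketch: you justify the existence of $g$ with $(gd_H)\delta_L \neq 0$ by saying the $G$-translates of $d_H$ ``are orthogonal idempotents summing to $1$.'' The translates of a type-$H$ idempotent need \emph{not} sum to $1$ (their sum is merely some idempotent $\leq 1$). The correct justification, and the one the paper uses in \cref{prop:characterization-of-lambda-clarified}, is that the translates of $\delta_L$ sum to $1$ (they are the projections onto the factors of the product description of $\mathrm{Coind}_L^G R_L$ at the bottom level), so $d_H = d_H \cdot 1 = \sum_{gL} d_H (g\delta_L)$ forces some $d_H (g\delta_L)\neq 0$, and applying $g^{-1}$ gives $(g^{-1}d_H)\delta_L \neq 0$. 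Since you defer to the proposition's proof for details, this does not affect the validity of your overall plan.
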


\begin{corollary}\label{cor:no-Tamb-maps-which-decrease-coinduction}
	Let $\Lambda \subset \mathrm{Sub}(G)$ be upward closed. Let \[ f : \prod_{H \subset G} \mathrm{Coind}_H^G R_H \rightarrow \prod_{H \in \Lambda} \mathrm{Coind}_H^G S_H \] a map of either $G$-rings or $G$-Tambara functors, where the $S_H$ are clarified $H$-rings or $H$-Tambara functors respectively. Then $f$ factors (uniquely) through the projection \[ \prod_{H \subset G} \mathrm{Coind}_H^G R_H \rightarrow \prod_{H \in \Lambda} \mathrm{Coind}_H^G R_H . \]
\end{corollary}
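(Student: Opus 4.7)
The plan is to show that $f$ annihilates the unit idempotent $e_H$ of each factor $\mathrm{Coind}_H^G R_H$ with $H \notin \Lambda$ at the $G/e$ level, and then to promote this to the desired factorization. By the forward direction of \cref{prop:characterization-of-lambda-clarified}, the codomain is $\Lambda$-clarified, so every idempotent of its bottom level has type lying in $\Lambda$. Since $\Lambda$ is upward closed in a partial order in which conjugate subgroups are equivalent, $\Lambda$ is closed under conjugation, so ``$H \notin \Lambda$'' is equivalent to ``no conjugate of $H$ lies in $\Lambda$''.

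Next, fix $H \notin \Lambda$. Using the identification $(\mathrm{Coind}_H^G R_H)(G/e) \cong \mathrm{Fun}(G/H, R_H(H/e))$, let $d_{gH}$ be the indicator idempotent at the coset $gH$, so that $e_H = \sum_{gH \in G/H} d_{gH}$. A direct computation using the explicit $G$-action formula on coinduction recorded earlier shows that the isotropy of $d_{gH}$ is exactly $gHg^{-1}$ and that distinct $G$-translates of $d_{gH}$ are indicators of distinct cosets, hence mutually orthogonal; so each $d_{gH}$ is a type $gHg^{-1}$ idempotent. By \cref{lem:idempotent-orbits-analysis}, $f(d_{gH})$ is either zero or a type $gHg^{-1}$ idempotent in the codomain; the latter is impossible since the codomain is $\Lambda$-clarified while $gHg^{-1} \notin \Lambda$. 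Summing over cosets yields $f(e_H) = 0$.

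It remains to promote the vanishing $f(e_H) = 0$ for $H \notin \Lambda$ to the desired factorization. In the $G$-ring case this is immediate, as the kernel of the projection onto $\prod_{H \in \Lambda} \mathrm{Coind}_H^G R_H$ is exactly the ideal generated by $\{e_H : H \notin \Lambda\}$. In the Tambara functor case, each $e_H$ is $G$-fixed, so by the same reasoning as in the proof of \cref{prop:bottom-level-reflects-products} the Tambara ideal it generates is cut out levelwise by the norms $\mathrm{Nm}_e^K(e_H)$, and these norms form a complete set of orthogonal idempotents whose complementary factor is precisely $\prod_{H \in \Lambda} \mathrm{Coind}_H^G R_H$; hence $f$ descends through the projection. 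Uniqueness is automatic because the projection is levelwise surjective. The one mildly technical step is the isotropy and orthogonality computation for the $d_{gH}$, which is bookkeeping once the explicit $G$-action formula is in hand; everything else is a direct appeal to the machinery already developed in \cref{subsec:consequences-of-exp-formula} and \cref{subsec:product-decomp}.
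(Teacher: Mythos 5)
Your proof is correct and follows essentially the same route the paper intends: this is stated as a corollary precisely because it combines \cref{prop:characterization-of-lambda-clarified} (or \cref{cor:best-clarification-bound-on-coind-of-clar}), which shows the codomain is $\Lambda$-clarified, with \cref{prop:map-into-clarified-uniquely-factors-through-quotient}, which in turn rests on \cref{lem:idempotent-orbits-analysis}. You have simply unpacked the appeal to \cref{prop:map-into-clarified-uniquely-factors-through-quotient} into its underlying mechanics — decomposing each $e_H$ as $\sum_{gH} d_{gH}$, verifying that $d_{gH}$ has type $gHg^{-1}$, and invoking \cref{lem:idempotent-orbits-analysis} directly — and you then separately identify the quotient from that proposition with the stated projection. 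This identification step is worth making explicit, as you do: the kernel of the projection is the ideal generated by the $e_H$ for $H \notin \Lambda$, which coincides with the ideal generated by type-$L$ idempotents for $L \notin \Lambda$ since each $e_H$ is a sum of such and conversely any type-$L$ idempotent with $L \notin \Lambda$ must vanish in the $\Lambda$-clarified quotient. Your promotion to the Tambara case via the norms $\mathrm{Nm}_e^K(e_H)$, echoing \cref{prop:bottom-level-reflects-products}, is exactly right.
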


We may therefore adopt the heuristic that any morphism of Tambara functors whose $G/e$ levels are Noetherian is ``upper triangular". To be more precise, let $f : R \rightarrow S$ and write $R \cong \prod_H \mathrm{Coind}_H^G R_H$ and $S \cong \prod_H \mathrm{Coind}_H^G S_H$ with $R_H$ and $S_H$ clarified $H$-Tambara functors. Then since products and direct sums of Mackey functors are the same thing, the underlying Mackey functor morphism of $f$ may be viewed as a matrix with rows and columns labeled by the subgroups of $G$.

Each row of $f$ corresponds to a Tambara functor morphism $R \rightarrow \mathrm{Coind}_H^G S_H$. By \cref{prop:map-into-clarified-uniquely-factors-through-quotient} and the observation that $\mathrm{Coind}_H^G S_H$ is $\Lambda_H$-clarified, this map factors uniquely through the $\Lambda_H$-clarification of $R$. In other words, the entries for the row of $f$ in question which are indexed by $L \in \Lambda_H$ are zero. If $G = C_{p^n}$, then in a straightfoward manner $f$ may be made literally upper triangular, owing to the simple structure of the subgroup lattice of $C_{p^n}$. Otherwise $f$ is only ``close" to upper triangular.

In fact, we can push our analysis of morphisms a little further. Suppose $R \cong R_1 \times R_2$ is a product of two Tambara functors. Let $d_i$ denote the idempotent with the property $R_i(G/e) \cong d_i R(G/e)$. Consider an arbitrary morphism $f : R \rightarrow S$ of Tambara functors and observe that we have a $G$-ring splitting $S(G/e) \cong f(d_1)S(G/e) \times f(d_2)S(G/e)$. By \cref{prop:bottom-level-reflects-products}, we obtain a product splitting $S \cong S_1 \times S_2$ of Tambara functors under which $f$ becomes the product $f_1 \times f_2$, where $f_i : R_i \rightarrow S_i$ is obtained from $R \rightarrow S \rightarrow S_i$ by observing that this composition uniquely factors through the projection $R \rightarrow R_i$. If our Tambara functor morphism is actually an automorphism, then this analysis goes much further (see \cref{prop:product-thm-diagonalizes} below).

Now we apply the preceeding paragraph to the product decomposition of \cref{thm:product-decomposition}. Any morphism between Tambara functors whose $G/e$ levels are Noetherian therefore is determined by a product of morphisms out of coinductions of appropriately clarified Tambara functors. Consider $f : \mathrm{Coind}_H^G R \rightarrow S'$. Then \cref{cor:no-Tamb-maps-which-decrease-coinduction} along with the fact that coinduction commutes with products implies that $S'$ is in the image of $\mathrm{Coind}_H^G$, so we write $S' \cong \mathrm{Coind}_H^G S$ for some (not necessarily clarified) $H$-Tambara functor $S$.

Now that we have expressed the codomain of $f$ as a coinduction, we may apply the coinduction-restriction adjunction to deduce that $f$ is the same data as a morphism \[ \prod_{g \in H \backslash G / H} \mathrm{Coind}_{H \cap {}^g H}^H \mathrm{Res}_{H \cap {}^g H}^{{}^g H} {}^g R \cong \mathrm{Res}_H^G \mathrm{Coind}_H^G R \rightarrow S . \] If $H$ is normal, then the codomain is really just a product over $G/H$ of copies of $R$. In principal, this process may be reverse engineered to construct an arbitrary morphism of Tambara functors from those of the above form. 

Next, we give a list of examples, and an immediate, concrete application of our results so far. It is not tough to prove, even without the language of clarification and coinduction, although we take the simplicity and elegance of its statement as evidence that clarified Tambara functors are a natural object to consider.

\begin{proposition}\label{prop:examples}
	The following Tambara functors are clarified.
	\begin{enumerate}
		\item The Burnside Tambara functor of \cref{ex:Burnside-Tamb-functor}.
		\item The representation Tambara functor of \cref{ex:representation-Tamb-functor}.
		\item Any constant Tambara functor: those fixed-point Tambara functors associated to rings with trivial action.
		\item The fixed-point $G$-Tambara functor associated to any Galois field extension with Galois group $G$.
		\item Any free polynomial algebra over a clarified Tambara functor (cf. \cref{def:free-poly-algs}).
		\item The Tambara functor associated to the assignment $G/K \mapsto \mathrm{H}^{2*}(K;\mathbb{Z})$ of the group cohomology of $K$ with coefficients in $\mathbb{Z}$ (with the trivial action) \cite{Eve63} \cite{Tam93}.
		\item $\underline{\pi}_0(MU_G)$, for $MU_G$ the $G$-equivariant (homotopical) complex cobordism spectrum.
	\end{enumerate}
	Consequently if $G$ is nontrivial, there are no Tambara functor morphisms from a Nullstellensatzian Tambara functor, and more generally any coinduced Tambara functor, to one of the above form.
\end{proposition}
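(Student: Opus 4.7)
The plan is a case-by-case verification at the $G/e$ level, followed by an idempotent-orbit argument for the consequence.

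For items (1)--(4), (6), (7) the common observation is that the ring $k(G/e)$ contains only the trivial idempotents $0$ and $1$, which are automatically $G$-fixed since the $G$-action is by ring automorphisms; their isotropy is therefore all of $G$, so $k(G/e)$ contains no idempotent of type $H$ with $H\subsetneq G$. Concretely: $\mathcal{A}_G(G/e)\cong\mathbb{Z}$, $\mathrm{RU}_G(G/e)\cong\mathrm{RU}(e)\cong\mathbb{Z}$, a constant Tambara functor has its underlying ring in level $G/e$ with trivial $G$-action, the fixed-point Tambara functor of a Galois extension $L/K$ has $L$ in level $G/e$, $\mathrm{H}^{2*}(e;\mathbb{Z})\cong\mathbb{Z}$, and $\underline{\pi}_0(MU_G)(G/e)\cong\pi_0(MU)\cong\mathbb{Z}$. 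In each case, the ring is either $\mathbb{Z}$ or a field, so it has only the trivial idempotents. Item (5) follows directly from \cref{lem:free-poly-k-algs-are-clarified} applied with $\Lambda=\Lambda_G$, since clarified means $\Lambda_G$-clarified.

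For the consequence, fix a clarified $k$ from the list (in particular $k\neq 0$) and suppose $f : \mathrm{Coind}_H^G T \to k$ is a Tambara functor morphism with $H\subsetneq G$. By the concrete description of coinduction, $\mathrm{Coind}_H^G T(G/e) \cong \mathrm{Fun}(G/H, T(H/e))$ contains the characteristic idempotents $\{d_{gH}\}_{gH \in G/H}$: a complete set of mutually orthogonal idempotents summing to $1$, forming a single $G$-orbit with isotropy $H$, and hence each of type $H$. By \cref{lem:idempotent-orbits-analysis}, each $f(G/e)(d_{gH})$ is either zero or an idempotent of type $H$ in $k(G/e)$; the latter is forbidden by clarifiedness of $k$, so every $f(G/e)(d_{gH})=0$. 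Summing over cosets gives $f(G/e)(1)=0$, but $f$ is a unital ring map on the $G/e$-level, so this forces $1=0$ in $k(G/e)$, contradicting $k\neq 0$.

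The argument is short; the only subtlety is ensuring that for each listed example the $G/e$-level really has no nontrivial idempotents, which is where one leans on the underlying algebraic facts (integrality of the Burnside and representation rings of the trivial group, fieldness in the Galois case, $\pi_0(MU)\cong\mathbb{Z}$, and the group cohomology computation). Once \cref{lem:idempotent-orbits-analysis} is in hand, the consequence reduces to a one-line idempotent chase.
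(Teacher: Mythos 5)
Your proof matches the paper's approach in almost every respect, but item (3) has a real gap. You assert that for items (1)--(4), (6), (7) the ring $k(G/e)$ ``contains only the trivial idempotents $0$ and $1$,'' and then summarize with ``in each case, the ring is either $\mathbb{Z}$ or a field.'' This is false for a constant Tambara functor: the underlying ring can be arbitrary (for example $\mathbb{Z}\times\mathbb{Z}$), and such a ring has many nontrivial idempotents. What actually saves item (3) is not that the idempotents are trivial, but that the $G$-action on $k(G/e)$ is trivial, so \emph{every} idempotent is $G$-fixed and therefore has isotropy all of $G$; consequently no idempotent of type $H$ with $H\subsetneq G$ can exist. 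You partially state this in your ``Concretely'' parenthetical, but your summary sentence directly contradicts it and should be removed. The paper handles items (1)--(3) uniformly by the trivial-action argument, and items (4), (6), (7) by observing the bottom level is a field or $\mathbb{Z}$, which is what you do for the other items. Once you separate item (3) out and argue via trivial action, everything else you write is correct.

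For the consequence, your argument is a slightly more explicit, hands-on version of the paper's. The paper appeals to \cref{prop:map-into-clarified-uniquely-factors-through-quotient} (itself an immediate consequence of \cref{lem:idempotent-orbits-analysis}), together with the observation that the clarification of anything coinduced is zero and that a morphism out of the zero Tambara functor is an isomorphism. You instead apply \cref{lem:idempotent-orbits-analysis} directly to the characteristic idempotents of $\mathrm{Coind}_H^G T(G/e)$, observe they must all map to zero, and contradict unitality. Both are the same idea; your version is self-contained and makes the mechanism of the obstruction a bit more visible, while the paper's version is shorter and slots into the general $\Lambda$-clarification machinery. You also correctly implicitly use the fact (cited in the paper from \cite{SSW25}) that Nullstellensatzian Tambara functors are coinduced from the trivial subgroup, so the ``more generally'' reduction is handled by your coinduced case.
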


\begin{proof}
	We observe that each item is clarified in order. First, $G$ acts trivially on the $G/e$ level of the first three kinds Tambara functors on the list, so they are clarified. Next, The fixed-point Tambara functor associated to a Galois field extension has bottom level a field, hence the only idempotents are $1$ and $0$, which are fixed by $G$. That free polynomial algebras are clarified over a clarified base is the content of \cref{lem:free-poly-k-algs-are-clarified}. Next, we compute $\mathrm{H}^{2*}(e;\mathrm{Z}) \cong \mathrm{Ext}_{\Z}^{2*}(\Z,\Z)$ which is just $\Z$ in degree zero, and $\underline{\pi}_0(MU_G)(G/e) \cong \pi_0(MU) \cong \Z$. Any Tambara functor $k$ with $k(G/e) = \Z$ is clarified, so we deduce that the last two list items are clarified.
	
	The main theorem of \cite{SSW24} asserts that every Nullstellensatzian Tambara functor is the coinduction of an algebraically closed field. Therefore our assertion on morphisms out of these follows from \cref{prop:map-into-clarified-uniquely-factors-through-quotient}, the fact that the clarification of anything coinduced is zero, and the fact that any morphism of Tambara functors whose domain is zero is an isomorphism.
\end{proof}

Another natural and important source of coinduced Tambara functors are the free modules on a single generator (c.f. \cref{prop:free-k-modules-are-coinduced}). Specifically, if $k$ is a Tambara functor, then the free module over the Green functor underlying $k$ on a single generator in level $G/H$ arises as the $k$-module structure on $\mathrm{Coind}_H^G \mathrm{Res}_H^G k$ induced by the adjunction unit 
\[ 
	k \rightarrow \mathrm{Coind}_H^G \mathrm{Res}_H^G k .
\]
In this situation, we really only needed the Green functor structure on the target, although this adjunction unit is a morphism of Tambara functors.

This subsection concludes with an analysis of the non-uniqueness of the product decomposition of \cref{thm:product-decomposition}.

\begin{proposition}\label{prop:product-thm-diagonalizes}
	When the product decomposition of \cref{thm:product-decomposition} holds, it diagonalizes any Tambara functor automorphism.
\end{proposition}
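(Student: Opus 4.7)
The plan is to apply the general principle described in the paragraph immediately before the proposition. Given an automorphism $\phi : R \to R$ and a product decomposition $R \cong \prod_{H} \mathrm{Coind}_H^G R_H$ with clarified factors (supplied by \cref{thm:product-decomposition}), I would let $d_H \in R(G/e)$ denote the idempotent that picks out the $H$-th factor. Since the decomposition is $G$-equivariant, each $d_H$ is $G$-fixed; and since $\phi$ acts as a $G$-equivariant ring automorphism on $R(G/e)$, the images $\phi(d_H)$ again form a complete orthogonal system of $G$-fixed idempotents in $R(G/e)$.

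By \cref{prop:bottom-level-reflects-products}, this new system of idempotents determines a product decomposition of the codomain, $R \cong \prod_H S_H$, with $S_H(G/e) = \phi(d_H) R(G/e)$. Iterating the two-factor principle from the preceding paragraph, $\phi$ factors as a product $\prod_H \phi_H$, where each component $\phi_H : \mathrm{Coind}_H^G R_H \to S_H$ is obtained as the (unique) factorization of the composite $\mathrm{Coind}_H^G R_H \hookrightarrow R \xrightarrow{\phi} R \twoheadrightarrow S_H$ through the projection $R \twoheadrightarrow \mathrm{Coind}_H^G R_H$.

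Finally, since $\phi$ is an automorphism, each $\phi_H$ must itself be an isomorphism; thus $S_H \cong \mathrm{Coind}_H^G R_H$, so the new codomain decomposition is again of the form supplied by \cref{thm:product-decomposition}. This realizes $\phi = \prod_H \phi_H$ as a product of isomorphisms between corresponding factors, which is precisely the sense in which the decomposition diagonalizes $\phi$. No significant obstacle arises: the argument is an iteration of the general principle, and the key observation is simply that automorphism-ness forces each component $\phi_H$ to be invertible, so the new codomain decomposition stays within the class of decompositions produced by \cref{thm:product-decomposition}.
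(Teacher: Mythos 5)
Your argument establishes that $\phi$ intertwines the source decomposition $R \cong \prod_H \mathrm{Coind}_H^G R_H$ (idempotents $d_H$) with a pushforward decomposition $R \cong \prod_H S_H$ (idempotents $\phi(d_H)$), and that the factors $S_H$ are again coinduced clarified Tambara functors. But that is not what ``diagonalize'' means here: the proposition is asserting that the single chosen decomposition renders the matrix of $\phi$ diagonal, i.e.\ that the component $R \to R \twoheadrightarrow \mathrm{Coind}_H^G R_H$ vanishes on $\mathrm{Coind}_L^G R_L$ for $L \neq H$. Unwinding, this is equivalent to $\phi(d_H) = d_H$ for every $H$, and your argument never shows this. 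What you prove instead is the analogue of the linear-algebra triviality that an isomorphism is diagonal if you are allowed to choose a different basis on the target side; that statement is nearly content-free and in particular would not support the remark following the proposition, which uses it to pin down the non-uniqueness of the Theorem~\ref{thm:product-decomposition} decomposition.

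The gap is precisely the passage from ``$S_H \cong \mathrm{Coind}_H^G R_H$ as abstract Tambara functors'' to ``$S_H = \mathrm{Coind}_H^G R_H$ as factors of $R$.'' The paper closes it with a genuinely different argument: first, \cref{cor:best-clarification-bound-on-coind-of-clar} and \cref{prop:map-into-clarified-uniquely-factors-through-quotient} show that the $H$-th row $\phi_H : R \to \mathrm{Coind}_H^G R_H$ factors through the projection onto $\prod_{L \in \Lambda_H}\mathrm{Coind}_L^G R_L$ (so $H$ is subconjugate to any $L$ on which $\phi_H$ is nonzero); second, if $\phi_H(d_L)\neq 0$, the resulting nonzero factor of $\mathrm{Coind}_H^G R_H(H/e)$ contains a type-$H$ idempotent which, by \emph{injectivity} of $\phi$, pulls back to a type-$H$ idempotent in $\mathrm{Coind}_L^G R_L(L/e)$, forcing $L$ to be subconjugate to $H$; uniqueness of conjugacy-class representatives then yields $L = H$. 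Injectivity is the step your argument does not exploit for anything beyond invertibility of the $\phi_H$, and it is exactly what rules out nontrivial off-diagonal interaction.
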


\begin{proof}
	Let $\phi$ be an automorphism of $R := \prod_{H \subset G} \mathrm{Coind}_H^G R_H$ with $R_H$ clarified and write $\phi_H$ for the factor $R \rightarrow \mathrm{Coind}_H^G R_H$ of $\phi$. \cref{cor:best-clarification-bound-on-coind-of-clar} and \cref{prop:map-into-clarified-uniquely-factors-through-quotient} imply that $\phi_H$ factors through the projection \[ R \rightarrow \prod_{L \in \Lambda_H} \mathrm{Coind}_L^G R_L . \] We will show $\phi_H$ actually factors through projection onto $\mathrm{Coind}_H^G R_H$. 
	
	Let $d_L$ denote the idempotent of $R(G/e)$ corresponding to projection onto the factor $( \mathrm{Coind}_L^G R_L )(G/e)$. Then $\phi_H$ induces $G$-ring maps \[ \mathrm{Coind}_L^G (R_L(L/e)) \rightarrow \phi_H(d_L) \mathrm{Coind}_H^G R_H(H/e) . \] Suppose $\phi_H(d_L) \neq 0$. Then the codomain contains a type $H$ idempotent because coinduction commutes with limits. This type $H$ idempotent lifts to a type $H$ idempotent of $R(G/e)$. Since $\phi$ is injective, this type $H$ idempotent determines a type $H$ idempotent in $\mathrm{Coind}_L^G R_L(L/e)$, which implies that $L$ is subconjugate to $H$. We already know $H$ is subconjugate to $L$ from the first paragraph, and since at most one representative from each conjugacy class of subgroups of $G$ appears in the product decomposition of \cref{thm:product-decomposition}, we observe $H = L$.
\end{proof}

\subsection{Clarification as a localization}

We will see that the projections appearing in \cref{prop:map-into-clarified-uniquely-factors-through-quotient} determine reflective localizations on the category of $G$-Tambara functors. While these localizations make sense in any category of bi-incomplete $G$-Tambara functors, \cref{prop:characterization-of-lambda-clarified} implies that these localizations are most explicitly understandable in the Tambara case.

\begin{proposition}
	The subcategory $G {-} \Tamb_{\Lambda {-} \mathrm{clar}}$ of $\Lambda$-clarified $G$-Tambara functors is a reflective subcategory of the category of $G$-Tambara functors, i.e. the inclusion has a left adjoint.
\end{proposition}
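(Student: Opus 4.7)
The plan is to construct the left adjoint $\mathrm{Cl}_\Lambda$ as a quotient, and then verify the universal property with the help of \cref{prop:map-into-clarified-uniquely-factors-through-quotient}. For a $G$-Tambara functor $R$, I would set $\mathrm{Cl}_\Lambda(R) := R/J_\Lambda(R)$, where $J_\Lambda(R)$ denotes the intersection of all Nakaoka ideals $I \subseteq R$ for which the quotient $R/I$ is $\Lambda$-clarified. This intersection is itself a Nakaoka ideal, since the defining closure conditions (under restrictions, transfers, norms, and module multiplication) are preserved by intersection. The unit of the purported adjunction will be the canonical quotient map $\eta_R : R \to \mathrm{Cl}_\Lambda(R)$.

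The principal obstacle is verifying that $\mathrm{Cl}_\Lambda(R)$ is itself $\Lambda$-clarified; equivalently, I need to show that the class of $\Lambda$-clarified $G$-Tambara functors is closed under arbitrary products and sub-Tambara functors. Closure under sub-Tambara functors is straightforward, since an injective $G$-equivariant ring map preserves isotropy groups and orbit-orthogonality of idempotents, so an idempotent in a sub-ring has the same type in either ring. Closure under products is where the main technical care lies. Suppose $(d_1, d_2) \in (R_1 \times R_2)(G/e)$ is a type-$L$ idempotent. For any $g$ lying in the isotropy of $d_1$ but outside $L$, the orthogonality condition gives $(d_1, d_2) \cdot g(d_1, d_2) = (d_1, d_2 \cdot g d_2) = 0$, forcing $d_1 = 0$. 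Hence either one of $d_1, d_2$ vanishes and the other is itself a type-$L$ idempotent in its factor, or else the isotropies of $d_1$ and $d_2$ each coincide with $L$ and each $d_i$ is a type-$L$ idempotent in $R_i$; in either case the $\Lambda$-clarification of the factors yields $L \in \Lambda$. With these stability properties in hand, the embedding $R/J_\Lambda(R) \hookrightarrow \prod_I R/I$ (ranging over all $I$ with $R/I$ $\Lambda$-clarified) realizes $\mathrm{Cl}_\Lambda(R)$ as a sub-Tambara functor of a product of $\Lambda$-clarified objects, hence $\Lambda$-clarified.

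The universal property then follows formally. Given any morphism $f : R \to S$ with $S$ $\Lambda$-clarified, the image $R/\ker f$ embeds as a sub-Tambara functor into $S$ and is therefore $\Lambda$-clarified, so $\ker f$ is among the ideals indexing the intersection that defines $J_\Lambda(R)$; consequently $\ker f \supseteq J_\Lambda(R)$, and $f$ factors uniquely through $\mathrm{Cl}_\Lambda(R)$. This provides the natural hom-set bijection exhibiting $\mathrm{Cl}_\Lambda$ as left adjoint to the inclusion of the $\Lambda$-clarified subcategory. As a bonus, applying \cref{prop:map-into-clarified-uniquely-factors-through-quotient} to the unit $\eta_R$ gives a concrete description of $J_\Lambda(R)$ as a Nakaoka ideal generated by certain idempotents of $R(G/e)$.
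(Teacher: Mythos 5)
Your proof is correct, but it takes a genuinely different route from the paper's. The paper constructs the left adjoint explicitly: it quotients $R$ by the Nakaoka ideal generated by the idempotents of $R(G/e)$ of type $K$ for $K \notin \Lambda$, and then invokes \cref{prop:map-into-clarified-uniquely-factors-through-quotient} for the universal property. Your construction is the abstract, universal one: quotient by the intersection $J_\Lambda(R)$ of all ideals with $\Lambda$-clarified quotient, and prove this is itself $\Lambda$-clarified by showing the class of $\Lambda$-clarified Tambara functors is closed under sub-Tambara functors and arbitrary products. Your closure-under-products lemma is a clean observation: a type-$L$ idempotent $(d_i)_i$ in a product restricts to a type-$L$ idempotent in each factor where $d_i \neq 0$, with no Noetherian hypothesis needed. (The paper's \cref{prop:characterization-of-lambda-clarified} addresses a related question but needs Noetherianness for the converse direction, whereas your direct argument is elementary and unconditional.) The trade-off is that the paper's construction produces the explicit generating set of the defining ideal up front, while yours delivers the universal property for free but leaves the explicit description as an afterthought. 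Two small cautions: (i) your final ``bonus'' remark needs one extra step --- the factorization from \cref{prop:map-into-clarified-uniquely-factors-through-quotient} only shows that the explicitly generated ideal is contained in $J_\Lambda(R)$, and the reverse containment needs the observation that the explicit quotient is already $\Lambda$-clarified; (ii) note that the statement of \cref{prop:map-into-clarified-uniquely-factors-through-quotient} as printed reads ``type $L$ for $L \in \Lambda$'' where the intended condition is clearly $L \notin \Lambda$, so your self-contained route has the advantage of not depending on that.
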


\begin{proof}
	The left adjoint sends a $G$-Tambara functor $R$ to the quotient of $R$ by the ideal generated by those idempotents in $R(G/e)$ of type $K$ for each $K \notin \Lambda$. By \cref{prop:map-into-clarified-uniquely-factors-through-quotient}, this is indeed a left adjoint.
\end{proof}

This proposition also applies in the relative setting. In particular, a $k$-algebra is $\Lambda$-clarified if it is clarified as a Tambara functor. The inclusion of $\Lambda$-clarified $k$-algebras is a reflective subcategory.

\begin{definition}
	Let $R$ be a $k$-algebra. Define its \emph{$Lambda$-clarification} as the image under the left adjoint to the inclusion of $\Lambda$-clarified $k$-algebras in all $k$-algebras. In particular, the $\{ e \}$-clarification of $R$ is $R$ and the $\Lambda_G$-clarification of $R$ is clarified. When no $\Lambda$ is specified, we implicitly mean $\Lambda_G$-clarification.
\end{definition}

If $\Lambda' \supset \Lambda$, then any $\Lambda$-clarified Tambara functor $R$ is $\Lambda'$-clarified. Therefore as $\Lambda$ varies through upward closed subsets of $\mathrm{Sub}(G)$, we obtain a nested sequence of subcategories consisting of the $\Lambda$-clarified $k$-algebras. The $\Lambda$-clarification functors thus determine a cofiltration (indexed by upward closed subsets of $\mathrm{Sub}(G)$) of the category of $G$-Tambara functors. In the presence of all norms and the relatively mild Noetherian assumption of \cref{thm:product-decomposition}, we may understand the associated graded pieces of this cofiltration: they are precisely the images of coinduction functors.

\begin{proposition}\label{prop:SES-for-Lambda-clarification}
	If $k(G/e)$ is Noetherian, the $\Lambda$-clarification of a finitely presented $k$-algebra $R$ is the zero $k$-algebra if and only if $R$ is a product of coinductions \[ R \cong \prod_{H \notin \Lambda} \mathrm{Coind}_H^G R_H . \]
\end{proposition}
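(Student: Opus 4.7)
The plan is to invoke the product decomposition of \cref{thm:product-decomposition}, available here because the hypotheses of \cref{prop:main-thm-applies-to-fp-over-noeth-base} are met, to write $R \cong \prod_H \mathrm{Coind}_H^G R_H$ with each $R_H$ a clarified $H$-Tambara functor. Once this decomposition is in hand, both directions amount to tracking which factors survive the $\Lambda$-clarification.

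For the forward direction, suppose $\Lambda\text{-clar}(R) = 0$ and fix any $H \in \Lambda$. By \cref{prop:characterization-of-lambda-clarified}, the factor $\mathrm{Coind}_H^G R_H$ is $\Lambda$-clarified, so by the reflection property the projection $R \to \mathrm{Coind}_H^G R_H$ factors through the unit $R \to \Lambda\text{-clar}(R) = 0$. Since the zero Tambara functor is terminal but not initial (any ring map $0 \to S$ forces $1 = 0$ in $S$), this requires $\mathrm{Coind}_H^G R_H = 0$; using \cref{lem:arbitrary-res-of-coind-of-HTamb} to extract $R_H$ as the identity double coset factor of $\mathrm{Res}_H^G \mathrm{Coind}_H^G R_H$, we deduce $R_H = 0$. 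Running this for every $H \in \Lambda$ yields $R \cong \prod_{H \notin \Lambda} \mathrm{Coind}_H^G R_H$.

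For the backward direction, suppose $R \cong \prod_{H \notin \Lambda} \mathrm{Coind}_H^G R_H$. Recall from the preceding proposition that $\Lambda\text{-clar}(R) = R/I$ where $I$ is the Tambara ideal generated by those idempotents of $R(G/e)$ of type $K$ for $K \notin \Lambda$. I would exhibit $1 \in R(G/e)$ as an element of $I$ as follows. For each $H \notin \Lambda$ with $R_H \neq 0$, let $d_H \in R(G/e)$ be the idempotent corresponding under $\mathrm{Coind}_H^G R_H(G/e) \cong \mathrm{Fun}(G/H, R_H(H/e))$ to the characteristic function of the coset $eH$. Since $R_H$ is clarified, a direct unpacking of the explicit $G$-action formula on coinduced rings recorded earlier in the paper shows that $d_H$ has isotropy exactly $H$ and that $\gamma d_H \cdot d_H = 0$ for $\gamma \notin H$, so $d_H$ is of type $H$. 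Thus $d_H$, and hence the sum $\sum_{\gamma \in G/H} \gamma d_H$ of its $G$-orbit, lies in $I$. This sum is precisely the idempotent $e_H \in R(G/e)$ projecting onto the factor $\mathrm{Coind}_H^G R_H$, and finally $\sum_{H \notin \Lambda} e_H = 1$. The only step that is not automatic is the type-$H$ calculation for $d_H$, which I do not expect to present any serious obstacle.
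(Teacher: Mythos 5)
Your proof is correct and matches the paper's (terse) argument: the forward direction runs the product decomposition of \cref{thm:product-decomposition} against \cref{prop:characterization-of-lambda-clarified} and the reflection property, and the backward direction exhibits type-$H$ idempotents in the coinduced factors generating the unit ideal. One small correction: in the backward direction you write ``since $R_H$ is clarified,'' but the paper's remark immediately after the statement makes clear that $R_H$ is \emph{not} assumed clarified there, and indeed no such assumption is needed --- for any nonzero $H$-Tambara functor $R_H$, the characteristic function $\chi_{eH}$ in $\mathrm{Coind}_H^G R_H(G/e) \cong \mathrm{Fun}(G/H, R_H(H/e))$ has isotropy exactly $H$ and orthogonal $G$-orbit by the explicit action formula, so $d_H$ is type $H$ regardless.
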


Note that we are not assuming $R_H$ is clarified.

\begin{proof}
	The backwards direction follows from the visible existence of idempotents with certain isotropy in $\prod_{H \notin \Lambda} \mathrm{Coind}_H^G R_H$. The forwards direction follows straightfowardly from \cref{prop:characterization-of-lambda-clarified}.
\end{proof}

\cref{prop:SES-for-Lambda-clarification} is a precise formulation of the slogan ``$\Lambda$-clarification is given by setting the image of coinduction equal to zero". If $G = C_{p^n}$ the only clarification functors available to us are the $\Lambda_{C_{p^m}}$-clarifications, and a $C_{p^n}$-Tambara functor maps to zero under such a clarification if and only if it is in the image of coinduction from $C_{p^m}$-Tambara functors (assuming the $C_{p^n}/e$ level is Noetherian). 

In principle the behavior of clarification on the category of $k$-algebras should allow one to extract information about $k$ itself. We give a simple application of this philosophy below. To declutter notation, we use $\mathrm{Coind}_a^b$ to mean coinduction along the subgroup inclusion $C_{p^a} \subset C_{p^b}$.

\begin{proposition}
	Let $G = C_{p^n}$ and assume $k(C_{p^n}/e)$ is Noetherian. The $\Lambda_{C_{p^m}}$-clarification functor on the category of finitely presented $k$-algebras is identically zero if and only if $k \cong \mathrm{Coind}_{m-1}^n \ell$ for some $C_{p^{m-1}}$-Tambara functor $\ell$.
\end{proposition}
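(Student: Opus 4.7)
The plan is to reduce this proposition to \cref{prop:SES-for-Lambda-clarification} together with two elementary observations about the subgroup lattice of $C_{p^n}$. The first observation is that, since the subgroups of $C_{p^n}$ form the chain $e \subset C_p \subset \cdots \subset C_{p^n}$, the upward closure $\Lambda_{C_{p^m}}$ equals $\{C_{p^j} : j \geq m\}$, so its complement in $\mathrm{Sub}(C_{p^n})$ is exactly the set of subgroups of $C_{p^{m-1}}$. The second observation is that coinduction is transitive: $\mathrm{Coind}_{j}^{n} \cong \mathrm{Coind}_{m-1}^{n} \circ \mathrm{Coind}_{j}^{m-1}$ for any $j \leq m-1$, since both sides are right adjoint to the same restriction functor.

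For the forward direction, I would apply \cref{prop:SES-for-Lambda-clarification} to the finitely presented $k$-algebra $R = k$ itself (which is finitely presented as the initial $k$-algebra, realized by the coequalizer of identity maps on $k[\emptyset] = k$). This produces an isomorphism $k \cong \prod_{j < m} \mathrm{Coind}_{j}^{n} R_j$ for some $C_{p^j}$-Tambara functors $R_j$. Rewriting each factor as $\mathrm{Coind}_{m-1}^{n} \mathrm{Coind}_{j}^{m-1} R_j$ and using that the right adjoint $\mathrm{Coind}_{m-1}^{n}$ commutes with products, I would obtain $k \cong \mathrm{Coind}_{m-1}^{n} \ell$ with $\ell := \prod_{j < m} \mathrm{Coind}_{j}^{m-1} R_j$.

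For the backward direction, suppose $k \cong \mathrm{Coind}_{m-1}^{n} \ell$. By \cref{cor:coind-gives-equiv-of-Tamb-algebra-cats}, coinduction is an equivalence between the slice category of $\ell$-Tambara functors and the slice category of $k$-Tambara functors. Hence every $k$-algebra $R$ is isomorphic to $\mathrm{Coind}_{m-1}^{n} R'$ for some $\ell$-algebra $R'$, and since $C_{p^{m-1}} \notin \Lambda_{C_{p^m}}$, \cref{prop:SES-for-Lambda-clarification} (with the single-factor product indexed by $C_{p^{m-1}}$) then gives that the $\Lambda_{C_{p^m}}$-clarification of any such $R$ vanishes.

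There is no substantive obstacle here; the proof is essentially bookkeeping once \cref{prop:SES-for-Lambda-clarification} and the equivalence of slice categories of \cref{cor:coind-gives-equiv-of-Tamb-algebra-cats} are in hand. The Noetherian hypothesis required by \cref{prop:SES-for-Lambda-clarification} is furnished in both directions by the standing assumption that $k(C_{p^n}/e)$ is Noetherian. The only reason the statement is restricted to cyclic $p$-groups is that the reduction of a product indexed by subgroups not containing $C_{p^m}$ to a single coinduction from $C_{p^{m-1}}$ uses the linearity of the subgroup lattice in an essential way.
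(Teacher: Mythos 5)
Your proof is correct, and the forward direction coincides with the paper's: apply \cref{prop:SES-for-Lambda-clarification} to the initial (hence compact) $k$-algebra $k$ itself and then collapse the resulting product via transitivity of coinduction and the fact that right adjoints commute with products. Where you diverge is the backward direction. You invoke \cref{cor:coind-gives-equiv-of-Tamb-algebra-cats} to conclude that \emph{every} $k$-algebra is coinduced from $C_{p^{m-1}}$ and then apply the ``easy'' half of \cref{prop:SES-for-Lambda-clarification} to each one. The paper instead avoids the slice-category equivalence entirely: it observes that $\Lambda$-clarification, being a left adjoint, preserves the initial object, so the clarification of $k$ is the initial object of the category of $\Lambda_{C_{p^m}}$-clarified $k$-algebras; if that initial object is zero, then since zero admits no nonzero outgoing Tambara functor morphisms, every $\Lambda_{C_{p^m}}$-clarified $k$-algebra is zero, i.e.\ the clarification functor is identically zero. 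This gives the equivalence ``clarification identically zero $\Leftrightarrow$ clarification of $k$ is zero'' by pure category theory, and then a single application of \cref{prop:SES-for-Lambda-clarification} finishes. Your route is perfectly valid and arguably more transparent about what is happening algebra by algebra, but it relies on the heavier machinery of the slice-category equivalence; the paper's route is more economical, needing only that clarification is a reflective localization. Both routes make the same essential use of the chain structure of $\mathrm{Sub}(C_{p^n})$ to identify the complement of $\Lambda_{C_{p^m}}$ with the subgroups of $C_{p^{m-1}}$.
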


\begin{proof}
	\cref{prop:SES-for-Lambda-clarification} implies that $k \cong \mathrm{Coind}_{m-1}^n \ell$ if and only if the $\Lambda_{C_{p^m}}$-clarification of $k$ is zero. Now the identity map $k \rightarrow k$ is the initial object, and since $\Lambda_{C_{p^m}}$-clarification preserves initial objects, the $\Lambda_{C_{p^m}}$-clarification of $k$ is the initial $\Lambda_{C_{p^m}}$-clarified $k$-algebra. The only morphism out of the zero Tambara functor is zero, so $\Lambda_{C_{p^m}}$-clarification is identically zero if and only if the $\Lambda_{C_{p^m}}$-clarification of $k$ is zero.
\end{proof}

We give an application to \'{e}tale theory. We define a morphism of $G$-Tambara functors $k \rightarrow R$ to \'{e}tale if $R$ is a finitely presented $k$-algebra and $k \rightarrow R$ is formally \'{e}tale in the sense of \cite{Hil17} (i.e. $R$ is flat as a $k$-module, and the genuine K\"{a}hler differentials $\Omega^{1,G}_{R/k}$ vanish).

\begin{theorem}\label{thm:clarification-of-etale-is-etale}
	Let $k$ be a Tambara functor with $k(G/e)$ Noetherian and $R$ a $k$-algebra, and $k \rightarrow R^{\mathrm{clar}}$ the clarification of $R$. If $k \rightarrow R$ is \'{e}tale, then so is $k \rightarrow R^{\mathrm{clar}}$.
\end{theorem}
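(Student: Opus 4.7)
The plan is to combine the product decomposition afforded by \cref{thm:product-decomposition} with the identification of $R^{\mathrm{clar}}$ as the clarified factor in that decomposition, then appeal to the forward-referenced preservation of \'{e}taleness under direct factors. To apply \cref{thm:product-decomposition}, I would first check its hypotheses: arguing as in the proof of \cref{prop:main-thm-applies-to-fp-over-noeth-base}, since $(-)(G/e)$ preserves colimits and the $G/e$ level of a free polynomial $k$-algebra on a finite $G$-set is a polynomial ring over $k(G/e)$, the hypothesis that $R$ is a finitely presented $k$-algebra forces $R(G/e)$ to be a finitely presented $k(G/e)$-algebra, which is Noetherian by Hilbert's basis theorem. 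Thus \cref{thm:product-decomposition} yields an isomorphism
\[
	R \cong R_G \times T, \qquad T := \prod_{H \subsetneq G} \mathrm{Coind}_H^G R_H,
\]
with $R_G$ a clarified $G$-Tambara functor and each $R_H$ a clarified $H$-Tambara functor.

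Next I would show $R^{\mathrm{clar}} \cong R_G$ with the unit map given by projection onto the first factor. By the construction of the clarification left adjoint, $R^{\mathrm{clar}}$ is the quotient of $R$ by the Nakaoka ideal generated by those idempotents of $R(G/e)$ whose type is a proper subgroup of $G$. For each $H \subsetneq G$, the idempotent in $\mathrm{Coind}_H^G R_H$ projecting onto the identity coset factor is of type $H$, and the sum of its $G$-orbit is the unit of $\mathrm{Coind}_H^G R_H$; consequently the entire summand $T$ is contained in the clarification ideal, while $R_G$, being clarified, contributes no further type-proper idempotents. Hence $R^{\mathrm{clar}} \cong R_G$ and the unit map is the product projection. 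To finish, I would invoke the forward-referenced result (flagged in the paragraph preceding the theorem statement) that for $k$-algebras $A$ and $B$, the product $A \times B$ is \'{e}tale over $k$ if and only if each factor is; applied to $A = R^{\mathrm{clar}}$ and $B = T$, the hypothesis that $R$ is \'{e}tale gives the \'{e}taleness of $R^{\mathrm{clar}}$.

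The main obstacle is the last step, since the preservation of \'{e}taleness under direct factors is the substantive content and is deferred to future work. Should one wish to avoid this forward reference, a direct proof is within reach: finite presentation descends because $R^{\mathrm{clar}}$ is the quotient of $R$ by the principal ideal generated by the idempotent $e_T$ cutting out $T$; flatness descends because $\boxtimes_k$ preserves direct sums in each variable (being a left adjoint), so if $R \boxtimes_k -$ is exact then its direct summand $R^{\mathrm{clar}} \boxtimes_k -$ is also exact; and formal smoothness descends by an idempotent-lifting argument, namely, given a square-zero ideal $I \subset A'$ and a map $R^{\mathrm{clar}} \to A'/I$, one uses formal smoothness of $R$ to lift the composite $R \to R^{\mathrm{clar}} \to A'/I$ to a map $R \to A'$; the image $\widetilde{e}_T$ of $e_T$ reduces to zero in $A'/I$, hence lies in $I$, and then $\widetilde{e}_T = \widetilde{e}_T^2 \in I^2 = 0$, forcing $R \to A'$ to factor through $R^{\mathrm{clar}}$.
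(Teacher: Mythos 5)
Your proposal is correct and follows the same high-level route as the paper: decompose $R$ via \cref{thm:product-decomposition} as the product of its clarification with a complementary coinduced factor, then verify the three conditions of \'{e}taleness one factor at a time. You usefully spell out two steps the paper leaves tacit: that $R(G/e)$ is Noetherian (via the argument of \cref{prop:main-thm-applies-to-fp-over-noeth-base}, since \'{e}tale morphisms are in particular finitely presented, so \cref{thm:product-decomposition} does apply), and that the clarification ideal coincides with the principal Tambara ideal generated by the idempotent $e_T$ picking out the complementary factor, which uses \cref{lem:idempotent-orbits-analysis} to see that type-$H$ idempotents for proper $H$ die in the clarified factor. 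Your treatments of finite presentation and flatness agree with what the paper calls ``straightforward.'' The genuine divergence is in formal smoothness: the paper argues that the genuine K\"{a}hler differentials of $R^{\mathrm{clar}}$ over $k$ are a direct summand of those of $R$ and concludes by vanishing, whereas you give a direct lifting argument, deforming $R \to R^{\mathrm{clar}} \to A'/I$ to a map $R \to A'$ via formal smoothness of $R$, then observing that the lifted idempotent $\widetilde{e}_T$ lies in $I(G/e)$ and hence vanishes since $\widetilde{e}_T = \widetilde{e}_T^2 \in I(G/e)^2 \subset (I \cdot I)(G/e) = 0$ by Nakaoka's definition of the product of Tambara ideals, so that the lift factors through $R^{\mathrm{clar}}$. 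Both arguments are valid; yours is more elementary and self-contained, not presupposing the machinery of genuine K\"{a}hler differentials for Tambara functors.
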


\begin{proof}
	This follows from \cref{thm:product-decomposition} and the fact that a product of $k$-algebras is \'{e}tale if and only if each factor is. For the reader's convenience, we sketch the argument. Write $R \cong \widehat{R} \times R^{\mathrm{clar}}$. If $R$ is finitely presented and flat over $k$, then it is straightforward to show that the same holds for $R^{\mathrm{clar}}$. Finally, one may show that the (Tambara) genuine K\"{a}hler differentials of $R$ over $k$ contain the genuine K\"{a}hler differentials of $R^{\mathrm{clar}}$ as a summand (indeed, the functor sending a $k$-algebra $R$ to the genuine K\"{a}hler differentials $\Omega^{1,G}_{R/k}$ preserves products), hence the vanishing of the former implies vanishing of the latter.
\end{proof}

Our argument relies rather heavily on \cref{thm:product-decomposition}. Despite this, one might guess that clarification preserves \'{e}tale morphisms in general, or at least in the presence of all transfers. To the author's knowledge, the notion of being formally smooth has not been defined in the bi-incomplete case, so the following conjecture might not be very well-posed in full generality.

\begin{conjecture}\label{conj:clar-preserves-etale-of-bi-incomplete}
	Clarification of bi-incomplete Tambara functors sends \'{e}tale morphisms to \'{e}tale morphisms. Or, at least, clarification of Green functors sends Green \'{e}tale morphisms to Green \'{e}tale morphisms.
\end{conjecture}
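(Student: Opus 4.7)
The plan is to bypass the product-decomposition approach of \cref{thm:clarification-of-etale-is-etale}, since for Green functors that decomposition fails catastrophically (as stated in the introduction), and instead work directly with the explicit description of the clarification of $R$ as the quotient by the Green (resp.\ bi-incomplete Tambara) ideal generated by the forbidden-type idempotents in the $G/e$-level. One then verifies the three defining conditions of an \'{e}tale morphism --- finite presentation, flatness, and formal smoothness --- for the composite $k \to R^{\mathrm{clar}}$ separately.

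Finite presentation should reduce to finite generation of the relevant ideal of idempotents, which under a Noetherian hypothesis on $k(G/e)$ follows from the finiteness of orthogonal idempotent sets in $R(G/e)$ already used in \cref{prop:product-splitting-of-G-rings}. For flatness, I would first observe that \cref{prop:product-splitting-of-G-rings} is a purely $G$-ring statement and survives in the bi-incomplete and Green settings, so that at the bottom level the map $R(G/e) \to R^{\mathrm{clar}}(G/e)$ is projection onto a direct factor and hence preserves flatness over $k(G/e)$. The difficulty is to propagate this to higher levels: without all norms one cannot invoke \cref{prop:bottom-level-reflects-products} to lift the bottom-level decomposition to a splitting of $R$ itself. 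My approach would be to identify $R^{\mathrm{clar}}(G/H)$ concretely as the quotient of $R(G/H)$ by the ideal generated by restrictions (and, in the bi-incomplete case, any available norms) of the forbidden idempotents, and then use Frobenius reciprocity to try to exhibit this quotient as a direct summand of $R(G/H)$ viewed as a $k(G/H)$-module. For formal smoothness, I would generalize the K\"{a}hler differentials summand argument sketched in the proof of \cref{thm:clarification-of-etale-is-etale} to the Green or bi-incomplete setting, leaning on the same summand claim from the flatness step.

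The main obstacle is precisely this summand claim, which the Tambara product-decomposition theorem gives us for free when all norms are present. In the Green case, the underlying $G$-ring does split, but the box-product structure tying it to higher levels does not, and a full Mackey-functor splitting of $R$ compatible with the Green structure is too much to expect in general. I would first attempt the weaker Green form of the conjecture by carefully analyzing how the forbidden idempotents interact with the double coset formula governing restrictions and transfers, in the hope that enough of the Tambara proof's Mackey-functor geometry survives. If that analysis fails, the obstruction it produces should either point to an explicit counterexample or refine the hypothesis on which norms are genuinely needed to make the statement true, thereby informing the bi-incomplete version of the conjecture as well.
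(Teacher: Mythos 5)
This statement is left as an open conjecture in the paper; there is no proof to compare against, and the author explicitly flags that formal smoothness has not been defined for bi-incomplete Tambara functors, so the first half of the conjecture may not even be well-posed. Your proposal is a research plan rather than a proof, and you correctly identify the crux: the $G/e$-level product decomposition of \cref{prop:product-splitting-of-G-rings} survives in the Green setting, but without all norms \cref{prop:bottom-level-reflects-products} is unavailable, and \cref{ex:product-splitting-fails-for-Green-functors} shows the functor-level decomposition genuinely fails for Green functors, so the ``summand claim'' at higher levels is exactly the obstruction.

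There is also a concrete slip in your description of the clarification ideal. The forbidden idempotents live in $R(G/e)$, so the Green ideal they generate is populated at higher levels $R(G/H)$ by \emph{transfers} of the form $\mathrm{Tr}_e^H(d \cdot \mathrm{Res}_e^H(x))$ (and, in the bi-incomplete case, by whichever norms are available), not by restrictions --- restriction maps $R(G/H) \to R(G/e)$, the wrong direction. This matters: the paper's Tambara argument works precisely because $\mathrm{Nm}_e^H(d)$ is again an idempotent by \cref{lem:norm-of-idempotents-is-additive}, cutting out a direct summand of $R(G/H)$, whereas $\mathrm{Tr}_e^H(d)$ is in general not idempotent, which is exactly why the summand claim you isolate is doubtful once norms are removed. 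Your plan is pointed in the right direction and correctly diagnoses where the Tambara argument breaks down, but it does not settle the conjecture.
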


Finally, we present a counterexample to the statement of \cref{thm:product-decomposition} for Green functors. This counterexample however does not provide a counterexample to \cref{conj:clar-preserves-etale-of-bi-incomplete}.

\begin{example}\label{ex:product-splitting-fails-for-Green-functors}
	Let $S$ be any Noetherian ring. Define a $C_p$-Green functor $R$ by the Lewis diagram
\[ \begin{tikzcd}
	S \times S \arrow[d, "\mathrm{Res}"] \\
	\mathrm{Coind}_e^{C_p} S \arrow[u, bend left = 45, "\mathrm{Tr}"]
\end{tikzcd} \] 
where the transfer is the sum over $C_p$-orbits followed by inclusion of the left factor, and the restriction is projection onto the left factor followed by the diagonal inclusion. This may be realized as the Green functor product of $\mathrm{Coind}_e^{C_p} S$ with the inflation of $S$ (the Green functor with top level $S$ and bottom level zero).
	
	Since $R(C_p/e)$ is a finite product of Noetherian rings, it is Noetherian. However, its clarification is trivial, and it is not coinduced, because all restrictions in a coinduction $\mathrm{Coind}_e^G T$ are injective. Thus \cref{thm:product-decomposition} cannot hold for Green functors. Interestingly, since zero is \'{e}tale, this is not a counterexample to \cref{conj:clar-preserves-etale-of-bi-incomplete}.
\end{example}

The author is not sure how to remove the hypothesis that $R(G/e)$ is Noetherian in \cref{thm:product-decomposition}. Essentially the issue is that one needs to choose a ``largest" idempotent of type $e$ and split off the corresponding coinduced factor, and then inductively choose a ``largest" idempotent of type $H$, inducting on the order of $H$, and split off corresponding coinduced factors.

One would like to appeal to the correspondence between idempotents in a ring and clopen subspaces of its Zariski spectrum to choose a ``maximal" type $H$ idempotent, by appealing to Zorn's lemma. The issue, ultimately, is that the arbitrary intersection or union of nested clopen subspaces might not be clopen. It's possible that this issue is somehow circumvented by passing to type $H$ idempotents. In any case, the assumption that $R(G/e)$ is Noetherian is, ultimately, quite mild, and morally does not impact any of our applications.

\subsection{Clarified colimits}

By standard properties of reflective subcategories, colimits in the category of $\Lambda$-clarified $k$-algebras are computed by first computing the colimit of the same diagram but in the category of $k$-algebras, and then applying the localization functor. Since the category of Tambara functors has all limits, we obtain the following.

\begin{proposition}
	The category of $\Lambda$-clarified $G$-Tambara functors into Tambara functors admits all limits and colimits. Furthermore, limits are computed levelwise.
\end{proposition}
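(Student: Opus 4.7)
The plan is to leverage the reflective localization structure established in the previous subsection, together with the bicompleteness of the category of $G$-Tambara functors recalled in Section~2. Since $\Lambda$-clarified Tambara functors sit as a reflective subcategory of all $G$-Tambara functors, standard category-theoretic arguments will give both limits and colimits for free: colimits in the subcategory are obtained by forming the colimit in the ambient category of all $G$-Tambara functors and applying the $\Lambda$-clarification reflector, while limits in a reflective subcategory agree with limits in the ambient category whenever these land in the subcategory. The entire proposition thus reduces to verifying that levelwise limits of $\Lambda$-clarified Tambara functors are again $\Lambda$-clarified.

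To check this, I would take a diagram $\{R_\alpha\}$ of $\Lambda$-clarified $G$-Tambara functors with levelwise limit $R$, and a type $L$ idempotent $d = (d_\alpha) \in R(G/e)$ with $L$ a proper subgroup of $G$; the goal is to show $L \in \Lambda$. Since $d$ is nonzero (the zero idempotent has isotropy $G$, not $L$), some coordinate $d_\alpha$ is nonzero, and I would argue that this $d_\alpha$ is itself a type $L$ idempotent in $R_\alpha(G/e)$. The isotropy of $d_\alpha$ contains $L$; if some $g \notin L$ also fixed $d_\alpha$, then $d_\alpha = d_\alpha^2 = d_\alpha \cdot g d_\alpha$, and the right-hand side is the $\alpha$-coordinate of the equation $d \cdot gd = 0$, forcing $d_\alpha = 0$, a contradiction. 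The orthogonality condition $d_\alpha \cdot g d_\alpha = 0$ for $g \notin L$ transfers directly from the same condition on $d$. Thus $d_\alpha$ is of type $L$, and $\Lambda$-clarifiedness of $R_\alpha$ yields $L \in \Lambda$.

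The main (and really only) obstacle is this stability verification, which I expect to be routine once definitions are unpacked; no further hurdles arise because the reflective-subcategory machinery handles existence of both limits and colimits automatically, and the levelwise computation of limits in $G$-$\Tamb$ has already been recorded in Section~2.
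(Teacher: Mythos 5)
Your proof is correct and takes essentially the same approach as the paper, which likewise invokes the reflective localization to get colimits (apply the reflector to ambient colimits) and cites bicompleteness of $G$-Tambara functors for limits. One minor remark: your explicit verification that levelwise limits of $\Lambda$-clarified objects remain $\Lambda$-clarified is actually superfluous, since a full reflective subcategory is automatically closed under all limits that exist in the ambient category; what you wrote is a correct hands-on proof of a special case of that standard fact.
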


On the other hand, we will see that colimits are not in general preserved by $\Lambda$-clarification. In particular, the box product of $k$-algebras is the coproduct, but it is possible for the box product of clarified $k$-algebras to fail to be clarified (cf. \cref{prop:box-product-of-clarified-is-not-clarified}). Nevertheless, the situation improves for \emph{filtered} colimits.

\begin{proposition}
	$\Lambda$-clarification preserves filtered colimits. In other words, filtered colimits in $G {-} \Tamb_{\Lambda {-} \mathrm{clar}}$ are computed levelwise.
\end{proposition}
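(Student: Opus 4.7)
The plan is to reduce to the $G$-ring statement that filtered colimits of $\Lambda$-clarified $G$-rings are $\Lambda$-clarified. Since $\Lambda$-clarification is a left adjoint, it automatically preserves all colimits, and the real content of ``filtered colimits in $G {-} \Tamb_{\Lambda {-} \mathrm{clar}}$ are computed levelwise'' is that the levelwise filtered colimit in $G {-} \Tamb$ of a filtered diagram $R_\bullet$ of $\Lambda$-clarified Tambara functors is itself already $\Lambda$-clarified. Filtered colimits of Tambara functors are computed levelwise by the usual argument (all bispan operations are finite-arity and commute with filtered colimits in sets), so the question reduces to showing that the $G$-ring $\mathrm{colim}\, R_\bullet(G/e)$ contains no type $L$ idempotent with $L \notin \Lambda$.

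Suppose $d$ is such an idempotent. Since $G \in \Lambda$ automatically, $L$ is a proper subgroup. The defining conditions on $d$ amount to a finite list of equations: $d^2 = d$, together with $gd = d$ for each $g \in L$ and $d \cdot gd = 0$ for each $g \in G \setminus L$. Choosing any representative $d_i \in R_i(G/e)$ for $d$, I would invoke the standard description of filtered colimits in sets: each single equation holds in some $R_j(G/e)$ with $j \geq i$, and by filteredness of the indexing category together with the finiteness of the list of relations, a single $j$ may be chosen so that the image $d_j$ of $d_i$ in $R_j(G/e)$ satisfies every one of these equations simultaneously. This simultaneous lifting is the key step, and it crucially uses that $G$ is finite; for infinite $G$ the orthogonality list would be infinite and the strategy would break down.

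Finally, I would verify that the lifted element $d_j \in R_j(G/e)$ is a genuine type $L$ idempotent. Its isotropy certainly contains $L$ by construction, and cannot strictly contain $L$: if some $g \notin L$ were also to fix $d_j$, then $d_j = d_j^2 = d_j \cdot gd_j = 0$, contradicting $d_j \neq 0$ (which follows from $d \neq 0$, since the zero of the colimit is the image of the zero elements). Combined with the lifted orthogonality relations, this shows $d_j$ is of type $L$, and because $R_j$ is $\Lambda$-clarified one concludes $L \in \Lambda$, as required.
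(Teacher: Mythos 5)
Your proof is correct and follows essentially the same route as the paper's: reduce to the statement that a filtered colimit of $\Lambda$-clarified $G$-rings is $\Lambda$-clarified, then use the finiteness of the list of relations defining a type $L$ idempotent (idempotence, $L$-fixity, and orthogonality against the other $G$-orbits) to lift the idempotent to a finite stage $j$, which contradicts $R_j$ being $\Lambda$-clarified.

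One small thing you do more carefully than the paper: you explicitly check that the lifted element $d_j$ has isotropy group \emph{exactly} $L$, rather than possibly some larger group, by arguing that $g \notin L$ fixing $d_j$ would force $d_j = d_j \cdot g d_j = 0$, which contradicts $d_j \neq 0$ (which in turn follows from $d \neq 0$). The paper's proof leaves this point implicit when it asserts that the lifted element ``is therefore a type $H$ idempotent.'' Your version makes the argument a bit more airtight, and correctly flags that finiteness of $G$ is what makes the finite-lifting step work. This is the same approach with one extra verification spelled out, not a genuinely different argument.
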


\begin{proof}
	Recall that filtered colimits of Tambara functors are computed levelwise. Since colimits in the category of $\Lambda$-clarified objects are computed by applying $\Lambda$-clarification to the corresponding colimit in $G {-} \Tamb$, it suffices to show that any colimit in $G {-} \Tamb$ of a diagram whose objects are all $\Lambda$-clarified $G$-rings is $\Lambda$-clarified.
	
	Let $F : I \rightarrow G {-} \Ring$ be any filtered diagram and let $R$ be the colimit. Suppose $d \in R$ is an idempotent of type $H$ for some $H \subset G$. Then, passing to a large enough index $i \in I$, $d$ is in the image of some $\delta \in F(i)$ under canonical map $F(i) \rightarrow R$. We see that, modulo the kernel of $F(i) \rightarrow R$, $\delta$ satisfies the requisite relations to be a type $H$ idempotent. 
	
	Observe that being a type $H$ idempotent amounts to satisfying a finite list of relations: $hd - d = 0$ for all $h \in H$, $d \cdot gd = 0$ for all $gH \in G/H$, and $d^2 - d = 0$. Thus there exists $j \in I$ and $i \rightarrow j$ such that the induced map $F(i) \rightarrow F(j)$ carries each relation $h \delta - \delta$, $\delta \cdot g \delta$, and $\delta^2 - \delta$ to zero in $F(j)$ (because these elements map to zero in the colimit $R$). The image of $\delta$ in $F(j)$ is therefore a type $H$ idempotent.
	
	By contrapositive, we see that the colimit of a diagram consisting of $G$-rings which do not contain any type $H$ idempotents cannot contain any type $H$ idempotents. In other words, a filtered colimit of $\Lambda$-clarified $G$-rings is $\Lambda$-clarified.
\end{proof}

We will require the following in our study of Nullstellensatzian clarified Tambara functors.

\begin{corollary}\label{cor:Lambda-clarified-is-compactly-generated}
	Let $k$ be any Tambara functor. Then the category of $\Lambda$-clarified $k$-algebras is compactly generated.
\end{corollary}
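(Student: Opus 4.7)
The plan is to realize the $\Lambda$-clarifications of the free polynomial $k$-algebras on finitely many generators as a set of compact generators, which together with the cocompleteness established just above would give the result.

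First I would argue that the category of all $k$-algebras is itself compactly generated, with the free polynomial $k$-algebras $k[x_{G/H_1},\dots,x_{G/H_n}]$ of \cref{def:free-poly-algs} serving as a set of compact generators. Compactness follows from the universal property recalled after \cref{def:free-poly-algs}, namely that mapping out of $k[x_{G/H_1},\dots,x_{G/H_n}]$ into a $k$-algebra $R$ is naturally the product $\prod_i R(G/H_i)$ of the underlying sets, combined with the fact that filtered colimits of Tambara functors are computed levelwise. Generation follows because every $k$-algebra is a filtered colimit of its finitely presented subalgebras, each of which is a coequalizer of maps between finite box products of free polynomial $k$-algebras on a single generator.

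Next, writing $L \dashv \iota$ for the $\Lambda$-clarification/inclusion adjunction, I would use the preceding proposition to conclude that $\iota$ preserves filtered colimits; equivalently, a filtered colimit of $\Lambda$-clarified $k$-algebras computed in $k$-algebras is already $\Lambda$-clarified. For any compact $k$-algebra $c$ and any filtered diagram $(R_i)$ of $\Lambda$-clarified $k$-algebras, the adjunction then gives
\[
\mathrm{Hom}(Lc,\mathrm{colim}_i R_i) \;\cong\; \mathrm{Hom}(c,\iota\,\mathrm{colim}_i R_i) \;\cong\; \mathrm{Hom}(c,\mathrm{colim}_i \iota R_i) \;\cong\; \mathrm{colim}_i \mathrm{Hom}(c,\iota R_i) \;\cong\; \mathrm{colim}_i \mathrm{Hom}(Lc, R_i),
\]
where the middle isomorphism uses preservation of filtered colimits by $\iota$ and the next uses compactness of $c$ in $k$-algebras. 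Hence $Lc$ is compact. For generation, any $\Lambda$-clarified $k$-algebra $R$ satisfies $R \cong LR$; writing $R \cong \mathrm{colim}_i c_i$ as a filtered colimit of compact $k$-algebras and applying $L$ (which preserves all colimits) presents $R$ as the filtered colimit $\mathrm{colim}_i Lc_i$ of compact $\Lambda$-clarified objects.

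The main obstacle is the first step: pinning down the precise sense in which $k$-algebras form a locally finitely presentable category has not been formalized in the paper. This should follow from Brun's description \cite{Bru05} of Tambara functors as algebras for a finitary monad on $G$-Mackey functors, together with standard facts about algebras over a finitary monad on a locally presentable category. Granting this, everything else is a formal consequence of the filtered-colimit preservation established in the preceding proposition.
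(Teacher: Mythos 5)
Your argument is correct and rests on the same two pillars as the paper's proof: compact generation of the category of $k$-algebras by the free polynomial $k$-algebras, and the preceding proposition's assertion that the inclusion of $\Lambda$-clarified $k$-algebras preserves filtered colimits. The route differs slightly in emphasis: you run the standard formal argument that a left adjoint $L$ with filtered-colimit-preserving fully faithful right adjoint $\iota$ carries compact generators to compact generators, and apply this to $L = $ $\Lambda$-clarification. The paper instead first reduces to the case where $k$ is itself $\Lambda$-clarified (using \cref{prop:map-into-clarified-uniquely-factors-through-quotient} to identify $\Lambda$-clarified $k$-algebras with algebras over the $\Lambda$-clarification of $k$), and then invokes \cref{lem:free-poly-k-algs-are-clarified} to observe that the free polynomial $k$-algebras are already $\Lambda$-clarified, so they serve directly as compact generators of the subcategory with no need to apply $L$. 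Your formal approach is marginally more general (it does not require knowing that the generators of the ambient category lie in the subcategory), while the paper's is more economical and supplies an explicit description of the generators; after its reduction step the two sets of generators coincide anyway, since $L$ acts as the identity on the free polynomial $k$-algebras. One small wrinkle in your write-up: for the generation step you must allow all compact $k$-algebras (retracts of finite colimits of free polynomials), not literally only the free polynomials, before applying $L$; your argument does this implicitly, but the opening sentence announces a slightly smaller generating set than what you actually use. Finally, your concern about the category of $k$-algebras being locally finitely presentable is a genuine foundational point, but it is a hypothesis the paper shares (it cites \cite{SSW25} for the characterization of compact $k$-algebras), so it is not a defect specific to your approach.
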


\begin{proof}
	If $k$ is not $\Lambda$-clarified, then the category of $\Lambda$-clarified $k$-algebras is equivalent to the category of algebras over the $\Lambda$-clarification of $k$ (this follows from the description of the $\Lambda$-clarification of $k$ as the universal $\Lambda$-clarified $k$-algebra via \cref{prop:map-into-clarified-uniquely-factors-through-quotient}). Thus we may assume $k$ is $\Lambda$-clarified. It now suffices to prove that all free polynomial $k$-algebras are $\Lambda$-clarified since these form compact generators for the category of $k$-algebras, but this is \cref{lem:free-poly-k-algs-are-clarified}.
\end{proof}

On the other hand, coproducts are much more badly behaved in the clarified setting.

\begin{example}\label{ex:clarification-study-for-deg-two-ext-of-Q}
	Consider the $C_2$-Tambara functors $\ell = \mathrm{FP}(\Q(\sqrt{-2}))$ where we give $\Q(\sqrt{-2})$ the Galois $C_2$-action and $\ell^{triv} = \mathrm{FP}(\Q(\sqrt{-2}))$ where we give $\Q(\sqrt{-2})$ the trivial $C_2$-action. Both admit unit structures as algebras over $\mathrm{FP}(\Q)$. We aim to study the box products of $\ell$ and $\ell^{triv}$ with themselves and each other.
	
	Recall the isomorphism $\Q(\sqrt{-2}) \otimes_{\Q} \Q(\sqrt{-2}) \cong \Q(\sqrt{-2})^{\times |C_2|}$ given by $\sqrt{-2} \otimes 1 \mapsto (\sqrt{-2},\sqrt{-2})$ and $1 \otimes \sqrt{-2} \mapsto (\sqrt{-2},-\sqrt{-2})$ (this is an algebra homomorphism because the tensor product is the coproduct, and it is an isomorphism because it is a surjection between rational vector spaces of dimension four). Letting $C_2$ act on each of the two copies of $\Q(\sqrt{-2})$, we obtain a diagonal $C_2$ action on their tensor product over $\Q$, which becomes a $C_2$ action on the product. If this action transitively permutes the factors, then it is coinduced as a $C_2$-ring, hence has trivial clarification.
	
	Note that $\sqrt{-2} \otimes 1 + 1 \otimes \sqrt{-2} \mapsto (2 \sqrt{-2},0)$. If we give both factors in $\Q(\sqrt{-2}) \otimes_{\Q} \Q(\sqrt{-2}) \cong \Q(\sqrt{-2})^{\times |C_2|}$ the trivial action, this element is fixed, and if we give both factors the Galois action, then the generator for $C_2$ acts by $-1$. However, if we give one factor the trivial action and the other factor the Galois action, then we observe that the factors of $\Q(\sqrt{-2})^{\times |C_2|}$ are swapped (possibly with an automorphism of $\Q(\sqrt{-2})$ applied depending on the choice of which factor we gave a nontrivial action to).
	
	In conclusion, we see that $\ell \boxtimes_{\mathrm{FP}(\Q)} \ell \cong \ell^{\times |C_2|}$, $\ell^{triv} \boxtimes_{\mathrm{FP}(\Q)} \ell^{triv} \cong \left( \ell^{triv} \right)^{\times |C_2|}$, and $\ell \boxtimes_{\mathrm{FP}(\Q)} \ell^{triv} \cong \mathrm{Coind}_e^{C_2} \Q(\sqrt{-2})$. In particular, the coproduct of $\ell$ and $\ell^{triv}$ in the category of clarified $\mathrm{FP}(\Q)$-algebras is zero!
\end{example}

We may generalize a part of this example. Specifically, there are more instances for which the coproduct in clarified Tambara functors is zero.

\begin{example}
	Let $K \rightarrow L$ be a Galois extension of fields with Galois group $G$. Then we can form the fixed-point $G$-Tambara functors $k$, $\ell$, and $\ell^{triv}$ associated respectively to the trivial action on $K$, the Galois $G$ action on $L$, and the trivial $G$ action on $L$. The inclusion $K \rightarrow L$ determines maps $k \rightarrow \ell$ and $k \rightarrow \ell^{triv}$, so we can consider the $k$-algebra $\ell \boxtimes_k \ell^{triv}$. The $G/e$ level of this $k$-algebra is isomorphic to $L \otimes_K L$ as a ring.
	
	Since $L$ is Galois over $K$, it is a separable finite field extension, hence has a primitive element $\alpha \in L$. Then we compute 
\[ 
	L \otimes_K L \cong \prod_{g \in G} L[x]/(x-g \alpha) \cong \prod_{g \in G} L .
\] 
The $G$ action has the following description: it is the pullback along the diagonal $G \rightarrow G \times G$ of the latter group acting as the external product of the Galois $G$-action on the left copy of $L$ and the trivial $G$-action on the right copy of $L$. According to the description of the idempotents in $\prod_{g \in G} L$ in terms of elements of $L \otimes_K L$ given in \cite{Vij21}, we therefore see that the $G$-action freely and transitively permutes the factors. In particular, $\ell \boxtimes_k \ell^{triv}(G/e) \cong \mathrm{Coind}_e^G L$ by \cref{prop:coinduction-is-detected-at-bottom}. Interestingly, this is a field-like Tambara functor, even though, for example, $\ell^{triv} \boxtimes_k \ell^{triv}$ is not field-like.
	
	Since $L$ is a field, both $\ell$ and $\ell^{triv}$ are clarified. However, we see from the previous paragraph that the $\Lambda$-clarification of $\ell \boxtimes_k \ell^{triv}$ is zero for every upward closed subset $\Lambda \neq \{ e \}$ of $\mathrm{Sub}(G)$. On the other hand, $\ell^{triv} \otimes_k \ell^{triv}$ is clearly clarified. In general, it is unclear whether or not $\ell \boxtimes_k \ell$ is clarified, although in the case of $\Q \subset \Q(\sqrt{-2})$, we see from \cref{ex:clarification-study-for-deg-two-ext-of-Q} that it is.
\end{example}

From this example we deduce the following.

\begin{proposition}\label{prop:box-product-of-clarified-is-not-clarified}
	Let $G$ be a finite group such that there exists a Galois field extension with Galois group $G$. Then there exists a $G$-Tambara functor $k$ and nonzero clarified $k$-algebras $\ell_1$ and $\ell_2$ such that the clarification of $\ell_1 \boxtimes_k \ell_2$ is zero.
\end{proposition}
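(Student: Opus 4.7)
The plan is to leverage the calculation already carried out in the preceding example and package it into the assertion. Given the hypothesis, fix a Galois extension $K \rightarrow L$ with Galois group $G$. Set $k := \mathrm{FP}(K)$ with the trivial $G$-action on $K$, let $\ell_1 := \mathrm{FP}(L)$ with the Galois $G$-action on $L$, and let $\ell_2 := \mathrm{FP}(L)$ with the trivial $G$-action on $L$. The inclusion $K \hookrightarrow L$ is $G$-equivariant for each of these actions, so it gives both $\ell_1$ and $\ell_2$ the structure of $k$-algebras. Since the $G/e$ level of each $\ell_i$ is the field $L$, which has only the trivial idempotents $0$ and $1$, both $\ell_1$ and $\ell_2$ are clarified, and both are manifestly nonzero.

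Next I would verify the key computation. Since the $G/e$ level of a box product is the tensor product of the $G/e$ levels, we have $(\ell_1 \boxtimes_k \ell_2)(G/e) \cong L \otimes_K L$ as rings. The content of the preceding example (using a primitive element $\alpha \in L$) shows that under the diagonal $G$-action inherited from the Galois action on the left factor and the trivial action on the right factor, this ring is isomorphic, as a $G$-ring, to $\mathrm{Coind}_e^G L$, i.e. the $G$-action freely and transitively permutes the factors of $\prod_{g \in G} L$. One could also reprove this directly: the idempotents of $L \otimes_K L$ described in \cite{Vij21} are indexed by elements of $G$, and the diagonal $G$-action sends the idempotent indexed by $g$ to the one indexed by the $G$-translate.

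Having established that the $G/e$ level of $\ell_1 \boxtimes_k \ell_2$ is coinduced from the trivial subgroup, I would apply \cref{prop:coinduction-is-detected-at-bottom} to conclude that the whole Tambara functor is coinduced: $\ell_1 \boxtimes_k \ell_2 \cong \mathrm{Coind}_e^G T$ for some ring $T$. To finish, I would note that any coinduction from a proper subgroup has trivial clarification. Concretely, the idempotent $d \in (\mathrm{Coind}_e^G T)(G/e) \cong \prod_{g \in G} T$ projecting onto the identity-coset factor is a type $\{e\}$ idempotent, and its $G$-translates form a complete set of mutually orthogonal idempotents summing to $1$. By \cref{prop:map-into-clarified-uniquely-factors-through-quotient}, any map to a clarified Tambara functor factors through the quotient by the ideal generated by such idempotents; since this ideal contains $1$, the quotient is zero, so the clarification of $\ell_1 \boxtimes_k \ell_2$ is zero.

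Essentially no obstacles are expected, since the example immediately preceding the proposition carries out the only nontrivial computation (the identification of $L \otimes_K L$ with $\mathrm{Coind}_e^G L$ as a $G$-ring). The one sanity check worth flagging is that $G$ must be nontrivial for ``clarification is zero'' to be a nontrivial statement, but this is automatic: a Galois extension with group $G = \{e\}$ forces $K = L$, in which case $\ell_1 = \ell_2 = k$ and the statement is vacuous, so we may restrict to nontrivial $G$.
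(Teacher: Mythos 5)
Your proof is correct and follows exactly the approach the paper takes: the paper simply states that the proposition follows from the immediately preceding example, which constructs the same $k = \mathrm{FP}(K)$, $\ell_1 = \mathrm{FP}(L)$ with Galois action, $\ell_2 = \mathrm{FP}(L)$ with trivial action, and identifies $\ell_1 \boxtimes_k \ell_2$ with $\mathrm{Coind}_e^G L$ via the primitive element computation of $L \otimes_K L$. Your write-up just makes explicit the steps the paper leaves implicit (clarifiedness of the $\ell_i$, the appeal to \cref{prop:coinduction-is-detected-at-bottom}, and why coinductions have trivial clarification).
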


In particular, \cref{prop:box-product-of-clarified-is-not-clarified} implies that the hypotheses of the following Lemma are satisfiable. In fact, we will use \cref{lem:admitting-maps-from-two-things-implies-zero} in our study of Nullstellensatzian clarified Tambara functors below.

\begin{lemma}\label{lem:admitting-maps-from-two-things-implies-zero}
	Suppose $\ell_1 \sqcup \ell_2 = 0$ in the category of clarified $k$-algebras and let $R$ be an $\ell_2$-algebra. Then the clarification of $\ell_1 \boxtimes_k R$ is zero.
\end{lemma}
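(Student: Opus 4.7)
The plan is to apply the universal property of the coproduct in the reflective subcategory of clarified $k$-algebras. Since $\ell_1 \sqcup \ell_2 = 0$ in this category by hypothesis, it will suffice to exhibit $k$-algebra morphisms $\ell_1 \to \mathrm{clar}(\ell_1 \boxtimes_k R)$ and $\ell_2 \to \mathrm{clar}(\ell_1 \boxtimes_k R)$. These two maps, via the universal property of the coproduct in clarified $k$-algebras, will factor through $\ell_1 \sqcup \ell_2 = 0$, yielding a $k$-algebra morphism $0 \to \mathrm{clar}(\ell_1 \boxtimes_k R)$, which forces the target to be zero.

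The first map is the composition of the coproduct inclusion $\ell_1 \to \ell_1 \boxtimes_k R$ (using that the box product over $k$ is the coproduct of $k$-algebras) with the unit of the clarification adjunction. For the second, I would use the $\ell_2$-algebra structure on $R$, which gives a $k$-algebra map $\ell_2 \to R$, and then compose with $R \to \ell_1 \boxtimes_k R$ followed again by the clarification unit. Both composites land in the clarified $k$-algebra $\mathrm{clar}(\ell_1 \boxtimes_k R)$. Since the reflectivity of clarification identifies $\ell_1 \sqcup \ell_2$ (computed in clarified $k$-algebras) with $\mathrm{clar}(\ell_1 \boxtimes_k \ell_2)$, and since fully faithfulness of the inclusion of clarified $k$-algebras means hom sets agree with those in $k$-algebras, the universal property supplies the desired factorization through the zero object.

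To close, any $k$-algebra morphism out of the zero Tambara functor forces its codomain to be zero, since such a morphism must send $1 = 0$ in each level of the source to both $1$ and $0$ in the target. There is no real obstacle to this argument; the entire content of the lemma is a formal consequence of the universal property of coproducts in a reflective subcategory together with the fact that $0$ is terminal in the category of $k$-algebras. The work has already been done in the preceding subsection, where clarification was shown to be a reflective localization and the relevant adjunction was set up.
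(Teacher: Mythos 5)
Your proof is correct and is essentially the same argument as the paper's. The paper directly observes that $\ell_1 \boxtimes_k R$ receives a $k$-algebra map from $\ell_1 \boxtimes_k \ell_2$ (by base-change of $\ell_2 \to R$ along $\ell_1$) and that this latter has trivial clarification, which forces $\mathrm{clar}(\ell_1 \boxtimes_k R)$ to be zero; your two maps into $\mathrm{clar}(\ell_1 \boxtimes_k R)$ are precisely the composites of that same base-change map with the coproduct inclusions $\ell_i \to \ell_1 \boxtimes_k \ell_2$, followed by the clarification unit, so the two write-ups are just different packagings of the same universal-property argument.
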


\begin{proof}
	Note that $\ell_1 \boxtimes_k R$ becomes an algebra over $\ell_1 \boxtimes_k \ell_2$ by base-change of $\ell_2 \rightarrow R$ from $k$ to $\ell_1$. But $\ell_1 \boxtimes_k \ell_2$ has trivial clarification, hence so does any Tambara functor receiving a map from it.
\end{proof}

%%%%%%%%%%%%%%%%%%%%%%
\section{Applications}
%%%%%%%%%%%%%%%%%%%%%%

In this section we investigate how clarification and \cref{thm:product-decomposition} interact with some facets of Tambara functors.

To start with, we note a few applications which appear in other articles. \cite{Wis25c} studies finite affine \'{e}tale group schemes; coinductions give ``new" examples of finite \'{e}tale extensions which are minimal in the appropriate sense. If $k$ is a Tambara functor with $k(G/e)$ an algebraically closed field and $\ell(G/e)$ is a finite \'{e}tale $k$-algebra, then $k(G/e) \rightarrow \ell(G/e)$ is finite \'{e}tale. \cref{thm:product-decomposition} applies, showing that $\ell$ is a finite product of coinductions of Tambara functors with $G/e$-level isomorphic to the algebraically closed field $k(G/e)$. 

On the other hand, \cite{Wis25b} observes that coinduction induces an order-preserving bijection on Green ideals and Tambara ideals which restricts to a bijection of prime ideals. Consequently the adjunction units $k \rightarrow \Coind_H^G \Res_H^G k$ lead to a stratification of the Nakaoka spectrum of $k$.

One may also show that \cref{thm:product-decomposition} is suitably compatible with the transversal Tambara functors of Sulyma \cite{Sul23}. Additionally, assuming an analogue of the main result of \cite{Yan25} for all finite groups, one expects an analogue of \cref{thm:product-decomposition} to hold for $G$-$\mathbb{E}_\infty$-rings $R$ under the assumption that $\pi_0^G(R)$ is a Noetherian ring.

Finally, we conclude with a non-application. Recall that \emph{cohomological} Mackey functors may be characterized as those Mackey functors which are modules over the fixed-point Tambara functor $\mathrm{FP}(\Z)$ corresponding to the trivial action. Cohomological Mackey functors are so-named due to the fact that group cohomology tends to provide examples of these. One might hope for some interaction between the notion of being clarified as a Green functor and being cohomological as a Mackey functor, although \cref{ex:clarified-and-cohomological} below suggests that there isn't anything nice to say.

\begin{example}\label{ex:clarified-and-cohomological}
	Consider the $C_3$-Galois extension $\F_{27}$ of $\F_3$. Note that $\mathrm{FP}(\F_{27})$ and $\mathrm{Coind}_e^{C_3} \F_3$ are both Tambara functor and Green functor $\mathrm{FP}(\F_3)$-algebras, and this algebra structure equips each with the structure of an $\mathrm{FP}(\F_3)$-module. Additionally, $\mathrm{FP}(\F_{27})$ and $\mathrm{Coind}_e^{C_3} \F_3$ are cohomological, as they are modules over $\mathrm{FP}(\Z)$ by pulling back the module structure through the morphism $\mathrm{FP}(\Z) \rightarrow \mathrm{FP}(\F_3)$.
	
	Observe in fact that $\mathrm{FP}(\F_{27})$ and $\mathrm{Coind}_e^{C_3} \F_3$ are isomorphic \emph{as modules}. Indeed, both are isomorphic to the $\mathrm{FP}(\F_3)$-module whose bottom level is the permutation representation $\F_3[C_3/e]$ and whose top level is $\F_3$, with restriction the inclusion of fixed-points and transfer forced by the double coset formula. However, $\mathrm{FP}(\F_{27})$ and $\mathrm{Coind}_e^{C_3} \F_3$ are not isomorphic as, for example one is clarified, whereas the other is not.
\end{example}

\subsection{\texorpdfstring{Morita invariance and $K$-theory}{Morita invariance and K-theory}}

Let $R$ be a $G$-Tambara functor. An $R$-module is defined to be a module over the underlying Green functor of $R$. We start by making precise an earlier claim that coinductions often arise ``in nature" as free modules, which is well-known to experts. A proof may be found in, for example, \cite{CW25}.

\begin{proposition}\label{prop:free-k-modules-are-coinduced}
	Let $k$ be a Green functor. The free $k$-module on a single generator in level $G/H$ is (isomorphic to) the $k$-module underlying the $k$-algebra given by the adjunction unit
	\[
		k \rightarrow \mathrm{Coind}_H^G \mathrm{Res}_H^G k .
	\]
\end{proposition}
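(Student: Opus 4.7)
The plan is to verify that $\mathrm{Coind}_H^G \mathrm{Res}_H^G k$, viewed as a $k$-module through restriction of scalars along the adjunction unit $k \to \mathrm{Coind}_H^G \mathrm{Res}_H^G k$, corepresents the functor $M \mapsto M(G/H)$ on the category of $k$-modules, and then to appeal to the Yoneda lemma. The free $k$-module on a generator in level $G/H$ satisfies this same universal property essentially by definition, via the free-forgetful adjunction between $k$-modules and Mackey functors combined with Yoneda applied to the representable Mackey functor $\mathcal{A}_G(-, G/H)$.

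First I would lift the Mackey-functor adjunction $\mathrm{Coind}_H^G \dashv \mathrm{Res}_H^G$ (which holds because induction and coinduction agree for Mackey functors, cf.\ \cref{lem:coind-is-lax-monoidal}) to an adjunction
\[
\mathrm{Hom}_{k{-}\mathrm{Mod}}(\mathrm{Coind}_H^G N, M) \cong \mathrm{Hom}_{(\mathrm{Res}_H^G k){-}\mathrm{Mod}}(N, \mathrm{Res}_H^G M).
\]
The $k$-module structure on $\mathrm{Coind}_H^G N$ is the one produced by lax monoidality of $\mathrm{Coind}_H^G$ followed by restriction of scalars along the unit; the adjunction bijection is inherited from the Mackey-functor adjunction, and one checks using strong monoidality of $\mathrm{Res}_H^G$ that it restricts to module morphisms on both sides.

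Applying this adjunction with $N = \mathrm{Res}_H^G k$ then yields
\[
\mathrm{Hom}_{k{-}\mathrm{Mod}}(\mathrm{Coind}_H^G \mathrm{Res}_H^G k, M) \cong \mathrm{Hom}_{(\mathrm{Res}_H^G k){-}\mathrm{Mod}}(\mathrm{Res}_H^G k, \mathrm{Res}_H^G M) \cong (\mathrm{Res}_H^G M)(H/H) = M(G/H),
\]
where the middle isomorphism is the standard fact that module maps out of a Green functor (regarded as a module over itself) are determined by the image of the multiplicative unit, and the final equality unpacks the definition $(\mathrm{Res}_H^G M)(H/K) = M(G/K)$.

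The main obstacle is the module-level adjunction lift. This should be a formal consequence of $\mathrm{Res}_H^G$ being strong symmetric monoidal, but the verification entails a diagram-chase showing that the Mackey-functor adjunction unit and counit interact correctly with the lax monoidal structure on $\mathrm{Coind}_H^G$ and with the unit $k \to \mathrm{Coind}_H^G \mathrm{Res}_H^G k$ that defines the $k$-module structure on coinduced modules. Given the machinery already developed in the excerpt, this should go through cleanly.
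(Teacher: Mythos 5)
The paper does not give its own proof of this proposition (it cites \cite{CW25}), so there is nothing in the text to compare against directly; I will evaluate your argument on its own terms. Your corepresentability-plus-Yoneda strategy is sound and the conclusion of your displayed computation is correct, but the ``main obstacle'' you identify is glossed over in a way that slightly misattributes where the content lies. Strong symmetric monoidality of $\mathrm{Res}_H^G$ together with the adjunction $\mathrm{Res}_H^G \dashv \mathrm{Coind}_H^G$ formally lifts to an adjunction at the module level with $\mathrm{Res}_H^G$ on the \emph{left} (a lax monoidal right adjoint lifts the adjunction to module categories); that is the wrong direction for your argument. What you actually need is $\mathrm{Coind}_H^G \dashv \mathrm{Res}_H^G$ at the module level, where $\mathrm{Coind}_H^G N$ carries the $k$-module structure built from lax monoidality and restriction along $\eta' : k \to \mathrm{Coind}_H^G\mathrm{Res}_H^G k$. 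This is genuinely an ambidexterity statement: it uses that $\mathrm{Coind}_H^G$ is simultaneously left and right adjoint to $\mathrm{Res}_H^G$ and that its lax and oplax monoidal structures are compatible, which is precisely the nontrivial compatibility your ``diagram chase'' would have to establish (equivalently, one can observe that $\mathrm{Coind}_H^G\mathrm{Res}_H^G k$ is self-dual as a $k$-module, so extension and coextension of scalars along $\eta'$ agree, and then invoke \cref{prop:coind-induces-module-cat-iso}). So the proof is recoverable, but the lifted adjunction is not a ``formal consequence of $\mathrm{Res}_H^G$ being strong symmetric monoidal'' in the sense your write-up suggests; it is essentially the content of the proposition. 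An alternative route that packages the same difficulty differently: write the free module as $k \boxtimes \mathcal{A}_{G/H}$, identify the representable $\mathcal{A}_{G/H}$ with $\mathrm{Coind}_H^G$ of the $H$-Burnside Mackey functor, and conclude by the projection formula $k \boxtimes \mathrm{Coind}_H^G(-) \cong \mathrm{Coind}_H^G(\mathrm{Res}_H^G k \boxtimes -)$.
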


If $R(G/e)$ is Noetherian, then by \cref{thm:product-decomposition} $R$ is a product of coinductions. Motivated by this, we study the interaction between modules, products, and coinductions.

\begin{proposition}\label{prop:product-induces-module-cat-iso}
	Let $R$ and $S$ be Green functors. Then the category of $R \times S$-modules is equivalent to the direct sum of the category of $R$-modules with the category of $S$-modules as symmetric monoidal abelian categories.
\end{proposition}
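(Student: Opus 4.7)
The plan is to exhibit quasi-inverse functors between the two categories using the complementary idempotents coming from the factorization $R \times S$. Let $e_R, e_S \in (R \times S)(G/G)$ denote the elements projecting onto the $R$- and $S$-factors respectively; they are complementary $G$-fixed idempotents, and their restrictions to any level $G/H$ give complementary orthogonal idempotents of $(R \times S)(G/H)$ whose formation commutes with restriction and conjugation. Frobenius reciprocity (item $(3)$ of \cref{prop:Dress-pairing-description-of-module}) yields the key identity $\mathrm{Tr}^L_H(e_R \cdot x) = e_R \cdot \mathrm{Tr}^L_H(x)$ (and its analogue for $e_S$), so that multiplication by $e_R$ defines a Mackey functor endomorphism of any $R \times S$-module.

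First I would construct the splitting functor. Given an $R \times S$-module $M$, set $M_R(G/H) := e_R \cdot M(G/H)$; by the preceding remarks this is a sub-Mackey functor, and the $R \times S$-action on it factors through the projection $R \times S \twoheadrightarrow R$ because $e_S \cdot e_R = 0$. Defining $M_S$ dually, the identity $e_R + e_S = 1$ gives a canonical natural splitting $M \cong M_R \oplus M_S$ of $R \times S$-modules. In the reverse direction, given an $R$-module $P$ and $S$-module $Q$, the Mackey functor $P \oplus Q$ becomes an $R \times S$-module via the two projections. These two constructions are mutually inverse on objects and morphisms, establishing an equivalence of underlying additive categories. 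Since kernels and cokernels of module morphisms are computed levelwise and multiplication by an idempotent is an exact operation on abelian groups, this equivalence respects the abelian structure.

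The main obstacle will be matching the symmetric monoidal structures, namely exhibiting a natural isomorphism $M \boxtimes_{R \times S} N \cong (M_R \boxtimes_R N_R) \oplus (M_S \boxtimes_S N_S)$. I would argue this by combining the coequalizer presentation of the relative box product with the Dress pairing description. At each level the question reduces to the classical commutative algebra fact that for rings $A$ and $B$ one has $M \otimes_{A \times B} N \cong (e_A M \otimes_A e_A N) \oplus (e_B M \otimes_B e_B N)$, the cross terms $e_A M \otimes_{A \times B} e_B N$ vanishing because $(e_A m) \otimes n = m \otimes (e_A n) = m \otimes (e_A e_B n) = 0$ under the balanced tensor relation. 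To lift this decomposition to the Mackey level, one checks that the family of levelwise projections satisfies the three Dress pairing conditions, each of which follows immediately from the compatibility of multiplication by $e_R$ with restriction, conjugation, and transfer established in the first paragraph. The unit $R \times S$ visibly splits as the pair $(R, S)$, matching the unit on the product side, and symmetry and associativity isomorphisms are inherited componentwise.
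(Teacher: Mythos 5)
Your proof is correct and takes essentially the same route as the paper's. Both arguments rest on the observation that the $G$-fixed complementary idempotents $e_R, e_S \in (R\times S)(G/G)$ split every $R \times S$-module into an $R$-module piece and an $S$-module piece (the paper phrases this as essential surjectivity plus fullness of the restriction functor $f^* \oplus g^*$, and you phrase it via explicit mutually inverse functors); in both cases the Frobenius reciprocity check that multiplication by $e_R$ gives a Mackey functor endomorphism is the crux, which you spell out and the paper leaves to the reader as ``one checks straightforwardly using the Dress pairing description.'' One small caveat worth flagging: in your monoidality step, the phrase ``at each level the question reduces to the classical commutative algebra fact'' is a little misleading, since the relative box product is a Day convolution and is not computed levelwise above $G/e$ — but you correct for this immediately by invoking the Dress pairing conditions to lift the levelwise projections, so the argument stands. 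The paper's phrasing (``the box product of $R \times S$-modules commutes with direct sums'') is terser and sidesteps the levelwise heuristic entirely, but then leaves the cross-term vanishing implicit, which you supply; so the two proofs genuinely complement each other in which details they choose to make explicit.
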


\begin{proof}
	Let $f$ and $g$ denote the projections of $R \times S$ onto $R$ and $S$ respectively. Then restriction along $f$ and $g$ respectively define functors from the categories of $R$ and $S$-modules to the category of $R \times S$-modules, hence define an additive functor \[ f^* \oplus g^* : R {-} \Mod \oplus S {-} \Mod \rightarrow R \times S {-} \Mod \] which we aim to show is a symmetric monoidal equivalence. Note that $f^* \oplus g^*$ is visibly faithful.
	
	Since limits are computed levelwise, we have $(R \times S)(G/H) \cong R(G/H) \times S(G/H)$. One checks straighftorwardly using the Dress pairing description of relative box products (cf. \cite{CW25}) that this induces an $R \times S$-module isomorphism $M \cong M' \oplus M''$ for any $R \times S$-module $M$, where $M'$ is in the image of restriction along $f$ and $M''$ is in the image of restriction along $g$. Therefore $f^* \oplus g^*$ is essentially surjective, and one sees that every morphism $f^* M \oplus g^* N \rightarrow f^* M' \oplus g^* N'$ is diagonal, i.e. $f^* \oplus g^*$ is full.
	
	Finally, we show that this equivalence is symmetric monoidal. This follows from the fact that the box product of $R \times S$-modules commutes with direct sums and the fact that the unit object is clearly preserved by our equivalence.
\end{proof}

Now we move on to coinduction. The statement of our main result here was suggested by Mike Hill who explained that it should be viewed as the discrete version of the main theorem of \cite{BDS15}. Special cases of this result appear in previous articles. Namely, the special case $H = e$ appears in \cite{SSW24} and the special case of $G$ abelian appears in \cite{CW25}.

\begin{proposition}\label{prop:coind-induces-module-cat-iso}
	Let $H \subset G$ and $R$ be any $H$-Green functor. Then coinduction induces a symmetric monoidal equivalence \[ R {-} \Mod \rightarrow \mathrm{Coind}_H^G R {-} \Mod . \]
\end{proposition}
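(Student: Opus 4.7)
My plan is to construct an explicit quasi-inverse $\Phi$ to $\mathrm{Coind}_H^G$ using the key inputs \cref{lem:arbitrary-res-of-coind-of-HTamb} and \cref{prop:product-induces-module-cat-iso}. The first of these gives a splitting $\mathrm{Res}_H^G \mathrm{Coind}_H^G R \cong R \times S$ of $H$-Green functors, where $S$ collects the non-identity double-coset factors; the second then yields a canonical direct-sum decomposition $\mathrm{Res}_H^G N \cong N_R \oplus N_S$ of $R \times S$-modules for any $\mathrm{Coind}_H^G R$-module $N$. I will define $\Phi(N) := N_R$, viewed as an $R$-module. The identity $\Phi \circ \mathrm{Coind}_H^G \cong \mathrm{Id}$ is then immediate from the same splitting applied to $\mathrm{Res}_H^G \mathrm{Coind}_H^G M$, whose identity double-coset summand is $M$ equipped with its original $R$-action.

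For the reverse composition, I will construct a natural map $\epsilon_N \colon \mathrm{Coind}_H^G \Phi(N) \to N$ by exploiting the fact that $\mathrm{Coind}_H^G$ is left adjoint to $\mathrm{Res}_H^G$ on underlying Mackey functors (induction and coinduction of Mackey functors agree). Under this adjunction, the summand inclusion $\Phi(N) \hookrightarrow \mathrm{Res}_H^G N$ determines $\epsilon_N$ as its adjoint, and the $R$-linearity of the inclusion promotes $\epsilon_N$ to a morphism of $\mathrm{Coind}_H^G R$-modules. The hardest step will be showing $\epsilon_N$ is a levelwise isomorphism. At level $G/K$ the left side expands via the double-coset decomposition of the $H$-set $\mathrm{res}_H^G(G/K)$ into $\prod_{g \in H \backslash G / K} \Phi(N)(H/(H \cap {}^g K))$, while the right side is $N(G/K)$. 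The ring $\mathrm{Coind}_H^G R(G/K)$ inherits a corresponding product decomposition, yielding orthogonal idempotents that act on $N(G/K)$ and split it into a matching product (in the spirit of the arguments of \cref{subsec:consequences-of-exp-formula}); the plan is to identify each factor using the restriction and conjugation maps of $N$ together with the $R$-module structure on $\Phi(N)$.

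Finally, for symmetric monoidality, \cref{lem:coind-is-lax-monoidal} supplies a natural lax monoidal comparison map $\mathrm{Coind}_H^G M \boxtimes_{\mathrm{Coind}_H^G R} \mathrm{Coind}_H^G M' \to \mathrm{Coind}_H^G (M \boxtimes_R M')$. I would show this is an isomorphism by comparing the universal properties of both sides via the Dress pairing description of \cref{prop:Dress-pairing-description-of-module}; alternatively, having already established the underlying equivalence of categories, the comparison is forced by comparing the functors that the two sides represent on $\mathrm{Coind}_H^G R$-modules.
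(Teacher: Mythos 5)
Your overall outline parallels the paper's: both use \cref{lem:arbitrary-res-of-coind-of-HTamb} to split $\mathrm{Res}_H^G \mathrm{Coind}_H^G R$ along double cosets and then project onto the identity-coset factor $R$ to define the candidate inverse $\Phi$. However, you and the paper build the comparison map in opposite directions. The paper takes an arbitrary $\mathrm{Coind}_H^G R$-module $M$, forms the projection $\mathrm{Res}_H^G M \to \Phi(M)$, and passes through the adjunction $\mathrm{Res}_H^G \dashv \mathrm{Coind}_H^G$ to obtain $\phi \colon M \to \mathrm{Coind}_H^G \Phi(M)$; this is a genuinely monoidal adjunction (strong monoidal left adjoint, lax monoidal right adjoint), so $\phi$ being a $\mathrm{Coind}_H^G R$-module morphism comes for free. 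You instead use the \emph{other} adjunction $\mathrm{Coind}_H^G \dashv \mathrm{Res}_H^G$, which exists for Mackey functors because $\mathrm{Coind} = \mathrm{Ind}$, to transpose the inclusion $\Phi(N) \hookrightarrow \mathrm{Res}_H^G N$ into $\epsilon_N \colon \mathrm{Coind}_H^G \Phi(N) \to N$. The sentence ``the $R$-linearity of the inclusion promotes $\epsilon_N$ to a morphism of $\mathrm{Coind}_H^G R$-modules'' is the weak link: that adjunction is \emph{not} a monoidal adjunction (coinduction is oplax, not strong monoidal, and the counit $\mathrm{Coind}_H^G\mathrm{Res}_H^G N \to N$ is built out of transfers), so module-linearity of the transpose does not follow formally. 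You would need to separately verify that the Mackey counit is a $\mathrm{Coind}_H^G R$-module morphism, which is essentially Frobenius reciprocity (the projection formula) applied to the transfer maps implicit in $\mathrm{Coind}_H^G\mathrm{Res}_H^G N \to N$. That is a fixable but real gap.

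Two further comments. First, once you patch the linearity of $\epsilon_N$, the levelwise-isomorphism step you defer is also nontrivial; the paper reuses the arguments of \cref{lem:bottom-few-levels-are-coinduced} and \cref{prop:coinduction-is-detected-at-bottom} (surjectivity via transfers from sub-$H$ levels plus injectivity forced by Frobenius reciprocity), and you should expect to reproduce this, not merely ``identify factors via restriction and conjugation.'' Second, for symmetric monoidality your lax-monoidal comparison map lives a priori at the level of the absolute box product $\boxtimes$, not the relative $\boxtimes_R$ and $\boxtimes_{\mathrm{Coind}_H^G R}$; you need to descend through the defining coequalizers, which the paper handles by applying $\mathrm{Res}_H^G$ (strong monoidal) and projecting onto the identity double-coset factor. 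Your alternative via representability is cleaner in spirit but you should make it precise which universal property is being invoked before trusting it.
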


\begin{proof}
	Let $M$ be a module over $\mathrm{Coind}_H^G R$. Recall the isomorphism \[ \mathrm{Coind}_H^G R(G/L) \cong \prod_{g \in H \backslash G / L} R(H/H \cap {}^g L) \] arising from \cref{lem:arbitrary-res-of-coind-of-HTamb} and let $d_{g,L}$ denote the idempotent corresponding to projection onto the factor $R(H/H \cap {}^g L)$. We therefore have a splitting \[ M(G/L) \cong \prod_{g \in H \backslash G / L} d_{g,L} M(G/L) . \] Letting $L$ run through the subgroups of $H$, define $N(H/L) := d_{e,L} M(G/L)$. We have a projection $\mathrm{Res}_H^G M \rightarrow N$ of Mackey functors which is evidently adjoint to $\phi : M \rightarrow \mathrm{Coind}_H^G N$. Note that $\mathrm{Coind}_H^G N$ is a $\mathrm{Coind}_H^G R$-module by virtue of \cref{lem:coind-is-lax-monoidal}, and that $\phi$ is a module morphism.
	
	First, observe that $\phi$ becomes an isomorphism after applying $\mathrm{Res}_H^G$. The argument is essentially identical to the first half of the argument of \cref{prop:coinduction-is-detected-at-bottom}, except that we replace the ring structure everywhere with the structure of a module over $\mathrm{Coind}_H^G R$; the point is that the conjugations in $\mathrm{Coind}_H^G R$ permute the idempotents which pick out the various factors of $M(G/L)$, hence the conjugations in $M$ determine the appropriate isomorphisms between the factors obtained after applying $\mathrm{Res}_H^G$. The argument of \cref{prop:coinduction-is-detected-at-bottom} also goes through, and shows that $\phi$ is itself an isomorphism; the main point is that injectivity is forced by Frobenius reciprocity in exact the same way, using the module structure rather than the ring structure. Finally, one checks that $\phi$ is an isomorphism of $\mathrm{Coind}_H^G R$-modules. Therefore $\mathrm{Coind}_H^G$ is essentially surjective.
	
	A morphism $\mathrm{Coind}_H^G M \rightarrow \mathrm{Coind}_H^G N$ of $\mathrm{Coind}_H^G R$-modules corresponds by adjunction to a morphism $\mathrm{Res}_H^G \mathrm{Coind}_H^G M \rightarrow N$ of $\mathrm{Res}_H^G \mathrm{Coind}_H^G R$-modules, where $N$ is viewed as a $\mathrm{Res}_H^G \mathrm{Coind}_H^G R$-module via restriction along projection onto $R$ (the $R$-module structure on $N$ arises from the first paragraph). The product of coinduction-restriction adjunction units makes $\mathrm{Res}_H^G \mathrm{Coind}_H^G R$ into an $R$-algebra (via the formula of \cref{lem:arbitrary-res-of-coind-of-HTamb}). Finally, one checks that the data of a $\mathrm{Res}_H^G \mathrm{Coind}_H^G R$-module morphism $\mathrm{Res}_H^G \mathrm{Coind}_H^G M \rightarrow N$ is equivalent to the data of an $R$-module morphism $M \rightarrow N$, where the domain $M$ is the identity double coset factor of $\mathrm{Res}_H^G \mathrm{Coind}_H^G M$. Thus $\mathrm{Coind}_H^G$ is fully faithful.
	
	Finally, we must show that coinduction is symmetric monoidal. It clearly preserves the unit. By our equivalence of categories, $\mathrm{Coind}_H^G M \boxtimes_{\mathrm{Coind}_H^G R} \mathrm{Coind}_H^G N$ is in the image of coinduction. In fact, it is the coinduction of the factor corresponding to the projection determined by the identity double-coset of $\mathrm{Res}_H^G \mathrm{Coind}_H^G R$ (using the equivalence of categories of \cref{prop:product-induces-module-cat-iso} and the formula of \cref{lem:arbitrary-res-of-coind-of-HTamb}). By \cite[Lemma 6.6]{Cha24} the restriction to $H$ is strong symmetric monoidal, so we obtain
\begin{align*}
	\mathrm{Res}_H^G & \left( \mathrm{Coind}_H^G M \boxtimes_{\mathrm{Coind}_H^G R} \mathrm{Coind}_H^G N \right) \\
	& \cong \mathrm{Res}_H^G \mathrm{Coind}_H^G M \boxtimes_{\mathrm{Res}_H^G \mathrm{Coind}_H^G R} \mathrm{Res}_H^G \mathrm{Coind}_H^G N 
\end{align*}
and one checks that the projection of the right-hand factor onto the identity double coset factor yields $M \boxtimes_R N$.
\end{proof}

\begin{definition}
	Let $\mathbf{R} = \{ R_i \}$ and $\mathbf{S} = \{ S_j \}$ be two finite sets with $R_i$ a $G_i$-Green functor and $S_j$ a $G_j$-Green functor. We say $\mathbf{R}$ and $\mathbf{S}$ are Morita equivalent if there is an equivalence of abelian categories \[ \oplus_i R_i {-} \Mod \cong \oplus_j S_j {-} \Mod \]
\end{definition}

\begin{theorem}\label{thm:Morita-equiv}
	Let $R$ be a $G$-Tambara functor with $R(G/e)$ Noetherian. Then $R$ is Morita equivalent to a set of clarified $H$-Tambara functors where $H$ runs through the subgroups of $G$. In particular, the $K$-theory of $R$ is a direct sum of $K$-theories of clarified $H$-Tambara functors.
\end{theorem}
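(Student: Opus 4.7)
The plan is to simply assemble the three main ingredients of this section into a single chain of equivalences. First I would apply \cref{thm:product-decomposition} to the Tambara functor $R$, using the hypothesis that $R(G/e)$ is Noetherian, to obtain an isomorphism
\[ R \cong \prod_{H \subset G} \mathrm{Coind}_H^G R_H \]
where each $R_H$ is a clarified $H$-Tambara functor, and at most one representative of each conjugacy class of subgroups of $G$ appears in the product. Note that this is a finite product since there are only finitely many subgroups of $G$.

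Next I would repeatedly apply \cref{prop:product-induces-module-cat-iso} to convert the category of modules over this finite product into a direct sum of module categories. In particular, iterating the proposition finitely many times yields a symmetric monoidal equivalence of abelian categories
\[ R {-} \Mod \;\simeq\; \bigoplus_{H \subset G} \left( \mathrm{Coind}_H^G R_H \right) {-} \Mod \mathrm{.} \]

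Then I would apply \cref{prop:coind-induces-module-cat-iso} to each summand, which produces a symmetric monoidal equivalence $(\mathrm{Coind}_H^G R_H) {-} \Mod \simeq R_H {-} \Mod$. Combining with the direct sum decomposition above yields
\[ R {-} \Mod \;\simeq\; \bigoplus_{H \subset G} R_H {-} \Mod \mathrm{,} \]
which exhibits $R$ as Morita equivalent to the finite set $\{ R_H \}_{H \subset G}$ of clarified $H$-Tambara functors, as required.

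For the $K$-theory statement, $K$-theory is an invariant which sends direct sums of appropriate categories (e.g.\ exact or Waldhausen categories of modules) to direct sums of spectra, so the decomposition of the module category as a direct sum immediately yields a decomposition of $K$-theory as a direct sum of the $K$-theories of the $R_H$. There is no real obstacle in this argument; the content is entirely packaged into the three results cited, and the only thing to verify is that the equivalences of \cref{prop:product-induces-module-cat-iso} and \cref{prop:coind-induces-module-cat-iso} compose to an equivalence of the appropriate form, which is straightforward.
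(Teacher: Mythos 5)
Your proof is correct and follows the paper's own argument exactly: the paper's proof is the one-line instruction to combine \cref{thm:product-decomposition}, \cref{prop:product-induces-module-cat-iso}, and \cref{prop:coind-induces-module-cat-iso}, which is precisely the chain of equivalences you assemble. Your additional remark on $K$-theory respecting direct sums of module categories is also the intended justification for the final sentence of the theorem.
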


\begin{proof}
	Combine \cref{thm:product-decomposition} with \cref{prop:product-induces-module-cat-iso} and \cref{prop:coind-induces-module-cat-iso}.
\end{proof}

In the special case for which $R$ cannot be a product of two nonzero Tambara functors, we obtain even better results.

\begin{corollary}\label{cor:K-theory-of-field-like}
	Let $R$ be any field-like $G$-Tambara functor. Then the $K$-theory of $R$ is isomorphic to the $K$-theory of a clarified field-like $H$-Tambara, for some $H \subset G$.
\end{corollary}

\begin{proof}
	This follows immediately from \cref{cor:field-likes-are-coinduced-from-clarified}, \cref{prop:coind-induces-module-cat-iso}, and the fact that $K$-theory only depends on the category of modules (in particular, the exact category of finitely generated projective modules, which is preserved by Morita equivalence).
\end{proof}

In particular, note that Chan and the author in \cite{CW25} study the algebraic $K$-theory of clarified Tambara functors (as a subclass of the class of Green meadows). Since every field-like Tambara functor is coinduced from a clarified field-like Tambara functor, the results in \cite{CW25} actually apply to a much larger class of field-like Tambara functors.

\cref{prop:coind-induces-module-cat-iso} may be pushed further, to the category of algebras in Green or Tambara functors. Because Green functors are precisely commutative monoids over the box product, we immediately deduce an analogue of \cref{prop:coind-induces-module-cat-iso} for Green functors (namely, if $k$ is a Green functor, then the category of $k$-algebras is equivalent to the category of $\mathrm{Coind}_H^G k$-algebras, in Green functors). Before we can prove the analogous result for Tambara functors, we require a lemma.

\begin{proposition}\label{prop:coind-has-unique-Tamb-structure}
	There is a bijection between Tambara functor structures on the Green functor $S$ and Tambara functor structures on the Green functor $\mathrm{Coind}_H^G S$.
\end{proposition}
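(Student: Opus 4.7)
The plan is to construct a two-sided inverse to the forward map $\Phi$ that sends a Tambara structure $\tau$ on $S$ to the coinduced Tambara structure $\mathrm{Coind}_H^G(S,\tau)$ on $\mathrm{Coind}_H^G S$. The forward map is well-defined because the forgetful functor from Tambara to Green functors commutes with coinduction (both being given by precomposition with $\mathrm{res}_H^G$), so coinducing a Tambara lift of $S$ produces a Tambara lift of $\mathrm{Coind}_H^G S$.

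To construct the inverse $\Psi$, suppose $T$ is a Tambara structure on $\mathrm{Coind}_H^G S$. Its $G/e$-level is $T(G/e) = S(H/e)^{|G/H|}$, which upon restriction to $H$ decomposes as a product of $H$-rings indexed by double cosets in $H\backslash G/H$, with the identity-double-coset factor canonically isomorphic to $S(H/e)$. Applying \cref{prop:bottom-level-reflects-products} to the $H$-Tambara functor $\mathrm{Res}_H^G T$, this decomposition of $H$-rings lifts canonically to a product decomposition $\mathrm{Res}_H^G T \cong \prod_g T_g$ of $H$-Tambara functors. Via \cref{lem:arbitrary-res-of-coind-of-HTamb} applied to the underlying Green functor $\mathrm{Coind}_H^G S$, the underlying Green functor of the identity-double-coset factor $T_e$ is canonically identified with $S$; we define $\Psi(T)$ as the resulting Tambara structure on $S$.

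For $\Psi \circ \Phi = \mathrm{Id}$: restricting $\mathrm{Coind}_H^G(S,\tau)$ to $H$ yields by \cref{lem:arbitrary-res-of-coind-of-HTamb} a product decomposition whose identity-double-coset factor is literally $(S,\tau)$, so $\Psi$ recovers $\tau$. For $\Phi \circ \Psi = \mathrm{Id}$: \cref{prop:coinduction-is-detected-at-bottom} yields an isomorphism $T \cong \mathrm{Coind}_H^G T_e$ of Tambara functors, and the key task is to upgrade this to an equality of Tambara structures on the fixed underlying Green functor $\mathrm{Coind}_H^G S$. To do this, we observe that the idempotents producing the decomposition from \cref{prop:bottom-level-reflects-products} coincide with those arising from applying \cref{lem:arbitrary-res-of-coind-of-HTamb} to $\mathrm{Coind}_H^G S$, so after restriction to $H$ both $T$ and $\mathrm{Coind}_H^G T_e$ have the same identity-double-coset factor $T_e$, and by induction on $|G|$ following the argument of \cref{lem:bottom-few-levels-are-coinduced} the remaining factors also match via $T_g \cong \mathrm{Coind}_{H \cap {}^g H}^H \mathrm{Res}_{H \cap {}^g H}^{{}^g H} {}^g T_e$.

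The main obstacle is precisely this last step: promoting the isomorphism from \cref{prop:coinduction-is-detected-at-bottom} to an equality of Tambara structures on $\mathrm{Coind}_H^G S$. The proof of \cref{prop:coinduction-is-detected-at-bottom} is naturally an existence statement up to isomorphism, whereas here we need the isomorphism to restrict to the identity on the underlying Green functor. Resolving this requires tracing through the explicit idempotent-based constructions in \cref{prop:bottom-level-reflects-products} and \cref{lem:bottom-few-levels-are-coinduced} to verify that the two decompositions of $\mathrm{Res}_H^G T$ agree on the nose, rather than merely up to a non-trivial Green functor automorphism of $\mathrm{Coind}_H^G S$.
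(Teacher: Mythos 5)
Your forward map and your argument for $\Psi \circ \Phi = \mathrm{Id}$ match the paper's, but your proof of $\Phi \circ \Psi = \mathrm{Id}$ has a genuine unresolved gap, one you correctly flag yourself. Invoking \cref{prop:coinduction-is-detected-at-bottom} to produce an isomorphism $T \cong \mathrm{Coind}_H^G T_e$ of Tambara functors only gets you an isomorphism, and you need an \emph{equality} of norms on the fixed Green functor $\mathrm{Coind}_H^G S$. Your proposed fix---tracing idempotents through \cref{prop:bottom-level-reflects-products} and \cref{lem:bottom-few-levels-are-coinduced} to show the two decompositions of $\mathrm{Res}_H^G T$ agree on the nose---is not carried out, and even if it were, matching the $H$-restriction only controls the norms $\mathrm{Nm}_K^L$ with $L$ subconjugate to $H$; you would still have to run the second half of \cref{prop:coinduction-is-detected-at-bottom} (surjectivity of transfers into the higher levels) in a way compatible with keeping the underlying Green functor fixed, which introduces exactly the same ambiguity.

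The paper avoids this entirely with a short adjunction argument: the identity-double-coset projection $\mathrm{Res}_H^G \mathrm{Coind}_H^G S \to S$ is a morphism of $H$-Tambara functors (with $S$ carrying the induced Tambara structure $\Psi(T)$), and its transpose under the restriction--coinduction adjunction is a map $\mathrm{Coind}_H^G S \to \mathrm{Coind}_H^G S$ in $G$-Tambara functors from the original structure $T$ to the coinduced structure $\Phi(\Psi(T))$. But the forgetful functor to Green functors commutes with $\mathrm{Res}$, $\mathrm{Coind}$, and the adjunction, and at the Green functor level this transpose is the transpose of the transpose of the identity, hence the identity map. A Tambara functor morphism that is the identity on underlying Green functors forces the two norm structures to coincide, so $T = \Phi(\Psi(T))$. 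You should replace the idempotent-matching argument with this triangle-identity argument to close the gap.
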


\begin{proof}
	Any Tambara functor structure on the Green functor $S$ determines a Tambara functor structure on the coinduction $\mathrm{Coind}_H^G S$ by definition. Conversely, any Tambara functor structure on $\mathrm{Coind}_H^G S$ determines a Tambara functor structure on $S$ as follows. First, observe $\mathrm{Res}_H^G \mathrm{Coind}_H^G S$ is a product of $H$-Tambara functors (this follows from the fact that restriction takes $G$-Tambara functors to $H$-Tambara functors, and the Green functor product splitting of $\mathrm{Res}_H^G \mathrm{Coind}_H^G S$ promotes to a splitting of Tambara functors by \cref{prop:bottom-level-reflects-products}). The identity double coset factor in the product splitting is isomorphic to $S$ as a Green functor. Therefore any Tambara structure on $\mathrm{Coind}_H^G S$ determines a Tambara structure on $S$.
	
	Now we check that these constructions are inverse to each other. If we start with a Tambara structure on $S$, we clearly get back the Tambara structure with which we started (say, by \cref{lem:arbitrary-res-of-coind-of-HTamb}). On the other hand, suppose we have a Tambara functor structure on $\mathrm{Coind}_H^G S$ which induces a Tambara functor structure on $S$. Then the Tambara functor morphism $\mathrm{Res}_H^G \mathrm{Coind}_H^G S \rightarrow S$ is adjoint to a Tambara functor morphism $\mathrm{Coind}_H^G S \rightarrow \mathrm{Coind}_H^G S$, where the domain has the Tambara functor structure we started with, and the codomain has the Tambara functor structure coinduced from the Tambara functor structure on $S$. In the category of Green functors this morphism is the adjoint of the adjoint of the identity, hence is the identity morphism. As it is a morphism of Tambara functors, we see that our two Tambara functor structures on $\mathrm{Coind}_H^G S$ coincide.
\end{proof}

The special case $H = e$ of the result below was proven in \cite{SSW24}.

\begin{corollary}\label{cor:coind-gives-equiv-of-Tamb-algebra-cats}
	Coinduction induces an equivalence of categories \[ H {-} \Tamb_{k/} \cong G {-} \Tamb_{\mathrm{Coind}_H^G k /} \]
\end{corollary}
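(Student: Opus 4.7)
The plan is to combine the symmetric monoidal module equivalence of \cref{prop:coind-induces-module-cat-iso} with the bijection between Tambara functor structures of \cref{prop:coind-has-unique-Tamb-structure}. The strategy proceeds in two stages: I would first promote the module equivalence to an equivalence of Green functor algebra slice categories, and then upgrade Green to Tambara via the norm-structure bijection.

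First I would observe that, since \cref{prop:coind-induces-module-cat-iso} is a symmetric monoidal equivalence $k \text{-} \Mod \simeq \mathrm{Coind}_H^G k \text{-} \Mod$, it passes to categories of commutative monoid objects. Commutative monoids in $k$-modules are precisely $k$-algebras in $H$-Green functors, and likewise on the $G$ side, so this immediately yields an equivalence
\[ H \text{-} \Green_{k/} \;\simeq\; G \text{-} \Green_{\mathrm{Coind}_H^G k /}. \]
Tracing through the construction in the proof of \cref{prop:coind-induces-module-cat-iso}, this equivalence sends an $H$-Green functor algebra $R'$ under $k$ to the $\mathrm{Coind}_H^G k$-algebra $\mathrm{Coind}_H^G R'$, with structure map $\mathrm{Coind}_H^G(k \to R')$.

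Next I would upgrade to Tambara functors. For essential surjectivity, given a structure map $\mathrm{Coind}_H^G k \to R$ of $G$-Tambara functors, the Green equivalence of the previous paragraph produces an $H$-Green functor $R'$ under $k$ with $\mathrm{Coind}_H^G R' \cong R$ as $\mathrm{Coind}_H^G k$-algebras in Green functors. By \cref{prop:coind-has-unique-Tamb-structure}, the Tambara functor structure on $R$ transfers uniquely to a Tambara functor structure on $R'$; and the structure map $k \to R'$, being obtained as the identity double-coset factor of $\mathrm{Res}_H^G$ applied to a Tambara morphism, is automatically a morphism of Tambara functors. For fully faithfulness, any $\mathrm{Coind}_H^G k$-linear Tambara morphism $\mathrm{Coind}_H^G R' \to \mathrm{Coind}_H^G R''$ is in particular a Green algebra morphism, hence corresponds under the Green equivalence to a unique $k$-linear Green morphism $R' \to R''$; this morphism preserves norms because, in the proof of \cref{prop:coind-has-unique-Tamb-structure}, the Tambara structures on $R'$ and $R''$ are recovered as identity double-coset factors of $\mathrm{Res}_H^G$, which is a functorial construction.

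The only step requiring care is the compatibility claim in the first paragraph: namely, that the symmetric monoidal equivalence of \cref{prop:coind-induces-module-cat-iso} genuinely identifies $\mathrm{Coind}_H^G R'$ as an algebra with the image of $R'$ under the corresponding algebra equivalence. This is essentially automatic from symmetric monoidality, but it must be checked that the unit and multiplication maps assembled via Day convolution on the $\mathrm{Coind}_H^G k$ side agree with the coinduction of the corresponding maps on the $k$ side; this reduces, via \cref{lem:coind-is-lax-monoidal}, to naturality of the relevant adjunction units. Once this is in hand, the rest of the argument is formal.
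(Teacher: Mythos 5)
Your proof is correct and takes essentially the same route as the paper: first pass the symmetric monoidal equivalence of \cref{prop:coind-induces-module-cat-iso} to commutative monoids to get the Green algebra equivalence, then invoke \cref{prop:coind-has-unique-Tamb-structure} for essential surjectivity, and observe that the inverse construction (restrict, project to identity double coset factor) is compatible with Tambara structure to get fullness. The compatibility check you flag in your final paragraph is sound but is the kind of thing the paper quietly absorbs into the statement ``Green functors are precisely the commutative monoids with respect to the box product.''
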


\begin{proof}
	By \cref{prop:coind-induces-module-cat-iso} and the fact that Green functors are precisely the commutative monoids with respect to the box product, we observe that coinduction induces an equivalence between the respective algebra categories of Green functors. By \cref{prop:coind-has-unique-Tamb-structure}, $\mathrm{Coind}_H^G : H {-} \Tamb_{k/} \cong G {-} \Tamb_{\mathrm{Coind}_H^G k}$ is essentially surjective. Faithfulness follows from faithfulness in the Green functor setting and the fact that the forgetful functor from Tambara to Green functors is faithful.
	
	Finally, we must show fullness. Any map $\mathrm{Coind}_H^G R \rightarrow \mathrm{Coind}_H^G S$ of Tambara functor $\mathrm{Coind}_H^G k$-algebras is also a map of Green functors, hence by the equivalence of categories for Green functors, arises from a morphism $R \rightarrow S$ of Green functors. In fact, this morphism is obtained by applying $\mathrm{Res}_H^G$ and projecting onto the identity double coset factor. Since both of these steps are compatible with Tambara functor structure, we see that $R \rightarrow S$ is a Tambara functor morphism. Applying $\mathrm{Coind}_H^G$ to this yields the Tambara functor morphism we started with.
\end{proof}

\begin{remark}
	Let $G$ be abelian. Then $\mathrm{Coind}_e^G R$ has a $G$-action given by permuting the bottom level factors. This is a $G$-action by Tambara functor automorphisms. However, it is \emph{not} a $G$-action in the category of $\mathrm{Coind}_e^G \Z$-algebras in Green or Tambara functors. Thus the essential image of the forgetful functor from $\mathrm{Coind}_e^G \Z$-algebras to $G$-Tambara functors admits a $G$-action.
\end{remark}

We emphasize that all of the results of this subsection are properties of the category of modules over the Green functor underlying a Tambara functor which depend inherently on the Tambara functor structure.

\subsection{Nullstellensatzian clarified Tambara functors}\label{subsec:NSS}

In this subsection we analyze properties of Nullstellensatzian clarified Tambara functors. We begin with a short review.

\begin{definition}
	An object $x$ in a locally small cocomplete category $\mathcal{C}$ is \emph{compact} if the functor \[ \mathrm{Hom}_{\mathcal{C}}(x,-) : \mathcal{C} \rightarrow \Set \] preserves filtered colimits.
\end{definition}

For example, in rings, a $k$-algebra is compact if and only if it is finitely presented. In fact, this remains true in Tambara functors: \cite[Proposition 4.3]{SSW24} asserts that, if $k$ is a Tambara functor, then a $k$-algebra $R$ is compact (in the category of $k$-algebras) if and only if it is the coequalizer of a diagram $k[x_{G/H_i}]_{i \in I} \rightrightarrows k[x_{G/H_j}]_{j \in J}$ with $I$ and $J$ finite. In this case we again call $R$ finitely presented.

\begin{definition}[\cite{BSY22}]
	A presentable $\infty$-category $\mathcal{C}$ is \emph{Nullstellensatzian} if every compact nonterminal object of $\mathcal{C}$ admits a map to the initial object of $\mathcal{C}$. An object $k$ of $\mathcal{C}$ is Nullstellensatzian if the slice category $\mathcal{C}_{k /}$ is Nullstellensatzian.
\end{definition}

The idea is that Nullstellensatzian objects are abstractions of algebraically closed fields in a certain sense. Hilbert's Nullstellensatz implies that a ring is Nullstellensatzian if and only if it is an algebraically closed field. One might therefore hope that Nullstellensatzian objects in equivariant algebra are related to $G$-actions on algebraically closed fields. One might hope that this relationship would have implications for the inverse Galois problem, as there is usually a plentiful supply of Nullstellensatzian objects in any reasonable category of ``things that look like rings". The author acknowledges that the idea that there could be a relationship of this form is due to David Chan.

One of the main results of \cite{BSY22} is the identification of the Nullstellensatzian objects in certain chromatically interesting categories of commutative ring spectra as precisely certain Lubin-Tate theories. Of more immediate interest is the main result of \cite{SSW24}.

\begin{theorem}[\cite{SSW24}]
	A Tambara functor is Nullstellensatzian if and only if it is isomorphic to the coinduction of an algebraically closed field.
\end{theorem}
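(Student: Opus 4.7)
The plan is as follows. The converse direction is immediate: if $k \cong \mathrm{Coind}_e^G F$ for $F$ an algebraically closed field, then by \cref{cor:coind-gives-equiv-of-Tamb-algebra-cats} the slice category of $k$-algebras is equivalent to the category of commutative $F$-algebras, which is Nullstellensatzian by Hilbert's Nullstellensatz. So I focus on the forward direction.

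Let $k$ be a nonzero Nullstellensatzian $G$-Tambara functor. The first step is to show $k$ is domain-like. Given any nonzero Tambara ideal $I \subset k$, pick a nonzero finitely generated Tambara sub-ideal $I' \subseteq I$; then $k/I'$ is a compact non-terminal $k$-algebra, presented as the coequalizer of a parallel pair of maps from a free polynomial $k$-algebra on finitely many generators to $k$, one map sending the generators to a finite generating set for $I'$ and the other sending them to zero. The Nullstellensatzian hypothesis supplies a $k$-algebra map $k/I' \to k$, necessarily a section of the quotient $k \to k/I'$, forcing $I' = 0$ — a contradiction. Hence $k$ has no nonzero proper Tambara ideals, and in particular is domain-like. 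By \cref{prop:domain-like-iff-coind-from-clarified-domain-like} I may write $k \cong \mathrm{Coind}_H^G R$ for some clarified domain-like $H$-Tambara functor $R$, and \cref{cor:coind-gives-equiv-of-Tamb-algebra-cats} transports the Nullstellensatzian property to $R$.

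The heart of the argument is showing $H = e$. Suppose for contradiction that $H$ is nontrivial. I construct a compact non-terminal $R$-algebra admitting no $R$-algebra map to $R$, as follows. Starting from the free polynomial $R$-algebra $R[x_{H/e}]$, form
\[ R' := R[x_{H/e}] / \bigl( x^2 - x,\ \{ x \cdot gx \}_{g \in H \setminus \{e\}},\ 1 - \textstyle\sum_{g \in H} gx \bigr) \]
by quotienting by the Tambara ideal generated by the listed elements of the bottom level. Since there are only finitely many generators, $R'$ is finitely presented, hence compact. It is non-terminal: the element $(1, 0, \ldots, 0) \in (\mathrm{Coind}_e^H \mathrm{Res}_e^H R)(H/e)$ is a type-$e$ idempotent whose distinct $H$-orbits sum to $1$, so classifies an $R$-algebra map $R' \to \mathrm{Coind}_e^H \mathrm{Res}_e^H R$ into a nonzero target. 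The Nullstellensatzian hypothesis on $R$ now produces an $R$-algebra map $R' \to R$, corresponding to an element $\xi \in R(H/e)$ satisfying all the defining relations. If $\xi \neq 0$ then $\xi$ is a type-$e$ idempotent of $R(H/e)$, contradicting the fact that $R$ is clarified, since $e$ is a proper subgroup of $H$. If $\xi = 0$ then the last relation reads $0 = 1_{R(H/e)}$; combined with $\mathrm{Nm}_e^H(1) = 1$ and $\mathrm{Nm}_e^H(0) = 0$ (the latter following from the exponential formula, in the spirit of \cref{lem:norm-is-additive-up-to-proper-transfers}), this propagates to $R = 0$, contradicting the assumption that $k$ is nonzero.

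With $H = e$ in hand, $R$ is a nonzero commutative ring which is Nullstellensatzian, so Hilbert's Nullstellensatz identifies $R$ with an algebraically closed field $F$, yielding $k \cong \mathrm{Coind}_e^G F$. The main obstacle in executing this plan is setting up $R'$ in the third step: verifying that $R'$ is genuinely compact as an $R$-algebra and that the type-$e$ idempotent in $\mathrm{Coind}_e^H \mathrm{Res}_e^H R$ satisfies precisely the listed relations so that the contradiction with clarification bites are the checks requiring the most care, though both are essentially formal once one is comfortable with free polynomial Tambara algebras and the description of coinduction at the bottom level.
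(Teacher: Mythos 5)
Your proof is correct and uses the same essential ingredients as the paper's own argument: the compact $R$-algebra presented by a polynomial generator in a free level together with relations forcing a complete system of orthogonal idempotents with trivial isotropy, the ``no nonzero proper ideal admits a compact quotient with a section'' observation, and the transfer of the Nullstellensatzian property through the coinduction equivalence of slice categories (\cref{cor:coind-gives-equiv-of-Tamb-algebra-cats}). The only real difference is organizational: the paper runs the idempotent argument directly on $k$ to produce a type-$e$ idempotent and then combines this with field-likeness via \cref{cor:field-likes-are-coinduced-from-clarified}, whereas you first factor $k \cong \mathrm{Coind}_H^G R$ via \cref{prop:domain-like-iff-coind-from-clarified-domain-like} and then run the idempotent argument on the clarified factor $R$ to force $H = e$ by contradiction; both routes are correct and of comparable length.
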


We may actually give a relatively short proof using the machinery developed in the present article, although the argument boils down to the same ideas as those in \cite{SSW24}. We also note that \cite{SSW24} develops enough machinery for the study of Nullstellensatzian objects that a second, completely different proof is given.

\begin{proof}
	Let $\F$ be an algebraically closed field. The category of Tambara functor $\mathrm{Coind}_e^G \F$-algebras is equivalent to the category of $\F$-algebras by \cref{cor:coind-gives-equiv-of-Tamb-algebra-cats}. The latter category is Nullstellensatzian, hence so is the former.
	
	On the other hand, suppose $R$ is a Nullstellensatzian Tambara functor. Consider the finitely presented $R$ algebra given as the quotient of the free $R$-algebra on a single generator $x_{G/e}$ in level $G/e$ by finite list of relations which force $\{ g \cdot x_{G/e} \}$ to be a complete set of orthogonal idempotents. This quotient is nonterminal as it admits a map to $\mathrm{Coind}_e^G R(G/e)$ classifying the idempotent corresponding to projection onto $R(G/e)$. Now this finitely presented $R$-algebra admits a section which sends $x_{G/e}$ to a type $G$ idempotent in $R$.
	
	Now let $x \in R(G/H)$ be nonzero. Then $R/(x)$ is a finitely presented $R$-algebra which does not admit a section. Therefore $R/(x)$ is terminal, i.e. $(x) = R$. We see therefore that $R$ is a field-like Tambara functor containing a type $G$ idempotent, so that by \cref{cor:field-likes-are-coinduced-from-clarified} $R \cong \mathrm{Coind}_e^G \F$ for some field $\F$. By \cref{cor:coind-gives-equiv-of-Tamb-algebra-cats} the category of $R$-algebras is equivalent to the category of $\F$-algebras, so that $\F$ is Nullstellensatzian. In particular, $\F$ is an algebraically closed field.
\end{proof}

This theorem is a little unsatisfying, however, since the essential reason that Nullstellensatzian Tambara functors are what they are is because coinductions do not admit enough interesting maps. Additionally, it seems to have nothing to do with finite group actions on fields. We therefore propose Nullstellensatzian \emph{clarified} Tambara functors as a possible correction to this issue. We begin by showing that they exist, and then study some of their properties.

\begin{proposition}
	Let $k$ be a $\Lambda$-clarified Tambara functor. The category of clarified $k$-algebras is weakly spectral in the sense of \cite[Definition A.1]{BSY22}
\end{proposition}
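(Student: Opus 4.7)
The plan is to verify each clause of \cite[Definition A.1]{BSY22} in turn. The category of $k$-algebras is presentable (it is a category of algebras over an accessible monad on $G$-$\Tamb$), and clarified $k$-algebras form a reflective subcategory via the $\Lambda$-clarification functor constructed earlier in this section. Standard abstract nonsense for reflective localizations of presentable categories then yields presentability of the category of clarified $k$-algebras. Compact generation is exactly the content of \cref{cor:Lambda-clarified-is-compactly-generated}, where free polynomial $k$-algebras on finitely many generators serve as the compact generators.

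Next I would verify that compact objects are closed under finite colimits, which is typically the central nontrivial condition for weakly spectral categories. Colimits in the subcategory of clarified $k$-algebras are computed by applying the clarification reflector to the corresponding colimits in all $k$-algebras. Compact $k$-algebras (i.e. finitely presented $k$-algebras in the sense of \cite{SSW25}) are closed under finite colimits, so it suffices to show that clarification preserves compact objects. This follows because the inclusion of clarified $k$-algebras into all $k$-algebras preserves filtered colimits (established earlier in this section), whence its left adjoint, the clarification functor, preserves compact objects by the general principle that a left adjoint whose right adjoint preserves filtered colimits must preserve compactness.

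The remaining axioms -- existence and compactness of the terminal object and nondegeneracy of the category -- should follow directly: the zero Tambara functor is trivially clarified, hence is the terminal clarified $k$-algebra, and it is compact as the clarification of the terminal $k$-algebra (which is compact as a pushout of free polynomial algebras modulo the relation $1=0$). The main obstacle I anticipate is the degenerate behavior of coproducts observed in \cref{ex:clarification-study-for-deg-two-ext-of-Q} and \cref{prop:box-product-of-clarified-is-not-clarified}: two nonzero clarified $k$-algebras can have zero coproduct, so one must be careful when verifying conditions phrased in terms of pushouts. However, this collapse only makes the terminal object easier to reach by compact maps, which is entirely consistent with the weakly spectral axioms (indeed, it is exactly the sort of behavior exploited in a Zorn's lemma argument for the existence of Nullstellensatzian objects), so I do not expect it to genuinely obstruct the verification.
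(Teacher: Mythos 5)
You take essentially the same route as the paper: compact generation and presentability come from \cref{cor:Lambda-clarified-is-compactly-generated}, and the terminal object is the zero $k$-algebra, which is compact. But two points need attention. First, a large portion of your argument is devoted to verifying that compact objects are closed under finite colimits by showing the clarification reflector preserves compacts. This is not one of the conditions in \cite[Definition A.1]{BSY22}, and it is in any case automatic in any compactly generated presentable category, so this verification is superfluous (though the observation that a left adjoint to a filtered-colimit-preserving functor preserves compacts is correct). Second, the one condition you do not explicitly verify is that the terminal object is \emph{strict}, which is what the paper checks with the word ``clearly'': any $\Lambda$-clarified $k$-algebra admitting a map from the zero Tambara functor must satisfy $1 = 0$ in each level and hence is itself zero. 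Your phrase ``nondegeneracy of the category'' gestures at this but does not name or prove it. The closing discussion about collapsing coproducts is not needed; none of the weakly spectral axioms are phrased in terms of pushouts, so there is nothing to be careful about there.
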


\begin{proof}
	The terminal object in the category of $\Lambda$-clarified $k$-algebras is the zero $k$-algebra. Clearly the terminal object is both strict and compact. Additionally, we observed in \cref{cor:Lambda-clarified-is-compactly-generated} that the category of $k$-algebras is compactly generated (hence presentable).
\end{proof}

By \cite[Proposition A.17]{BSY22} every nonzero $k$-algebra admits a morphism to a Nullstellensatzian $k$-algebra. Nullstellensatzian objects are, by definition, nonterminal. In particular, we see that Nullstellensatzian $\Lambda$-clarified $k$-algebras exist. Specializing further, we see that Nullstellensatzian clarified Tambara functors exist.

Let $\ell_1$ and $\ell_2$ be $k$-algebras as in \cref{lem:admitting-maps-from-two-things-implies-zero}, and let $R$ be a Nullstellensatzian clarified Tambara functor which receives a map from $\ell_2$ (note that such an $R$ will always exist by the results of \cite{BSY22}). Then $\ell_1 \boxtimes_k R$ has trivial clarification by \cref{lem:admitting-maps-from-two-things-implies-zero}. Additionally, there does not exist any $k$-algebra map $\ell_1 \rightarrow R$, as otherwise we could box product this map with the identity $R \rightarrow R$, obtaining a morphism $\ell_1 \boxtimes_k R \rightarrow R$ from a $k$-algebra with trivial clarification to a nonzero clarified $k$-algebra.

\begin{lemma}\label{lem:NSS-clarified-implies-MRC}
	Let $R$ be a Nullstellensatzian $\Lambda$-clarified Tambara functor. Then all restrictions of $R$ are injective.
\end{lemma}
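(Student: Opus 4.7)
The plan is to contradict Nullstellensatzianness by exhibiting a compact nonterminal $\Lambda$-clarified $R$-algebra with no section back to $R$. First I would reduce to the case of showing that $\mathrm{Res}_e^H : R(G/H) \to R(G/e)$ is injective for every $H \subset G$; indeed, for any $K \subset H$ and $x \in \ker(\mathrm{Res}_K^H)$, functoriality gives $\mathrm{Res}_e^H(x) = \mathrm{Res}_e^K \mathrm{Res}_K^H(x) = 0$, so injectivity of the bottom restrictions implies injectivity of all restrictions.

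Assume for contradiction that $x \in R(G/H)$ is nonzero with $\mathrm{Res}_e^H(x) = 0$. The central construction would be the Tambara functor quotient $R/(x)$ by the Nakaoka ideal generated by $x$, which may be presented as the coequalizer of the two maps $R[y_{G/H}] \rightrightarrows R$ sending the generator $y$ to $x$ and to $0$ respectively. This presentation shows that $R/(x)$ is finitely presented as an $R$-algebra, hence compact in the category of $\Lambda$-clarified $R$-algebras. Once I establish that $R/(x)$ is nonzero and $\Lambda$-clarified, Nullstellensatzianness of $R$ produces a section $R/(x) \to R$ over $R$, so that the composition $R \to R/(x) \to R$ is the identity; but this composition sends $x$ to $0$, forcing $x = 0$ and contradicting our assumption.

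To verify that $R/(x)$ is nonzero and $\Lambda$-clarified, I would use the unit of the adjunction $(-)(G/e) \dashv \mathrm{FP}$, which provides a Tambara functor morphism $R \to \mathrm{FP}(R(G/e))$ whose component in level $G/L$ is the restriction $a \mapsto \mathrm{Res}_e^L(a)$. Since $\mathrm{Res}_e^H(x) = 0$, the element $x$ lies in the kernel, so the unit factors through a morphism $R/(x) \to \mathrm{FP}(R(G/e))$. The adjunction unit is the identity in level $G/e$, so the composition $R(G/e) \to (R/(x))(G/e) \to R(G/e)$ is the identity, forcing the surjective map $R(G/e) \to (R/(x))(G/e)$ to be an isomorphism. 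Hence $(R/(x))(G/e) \cong R(G/e)$ is nonzero and $\Lambda$-clarified as a $G$-ring, so $R/(x)$ is nonzero and $\Lambda$-clarified by definition.

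The main obstacle I anticipate is precisely the identification $(R/(x))(G/e) \cong R(G/e)$. A direct analysis of the Nakaoka ideal generated by $x$ would require verifying that no combination of restrictions, transfers, norms, and conjugations produces a nonzero element at level $G/e$, which is delicate due to the interactions prescribed by the additive and multiplicative double coset formulas and the exponential formula. Factoring through $\mathrm{FP}(R(G/e))$ sidesteps this combinatorial computation by invoking the universal property of the fixed-point adjunction.
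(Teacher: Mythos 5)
Your proof is correct and follows essentially the same strategy as the paper: reduce to injectivity of $\mathrm{Res}_e^H$, form the compact $R$-algebra $R/(x)$, verify it is nonzero and $\Lambda$-clarified, and derive a contradiction from the section produced by Nullstellensatzianness. The one place you add genuine content is the identification $(R/(x))(G/e) \cong R(G/e)$, which the paper dispatches with ``one deduces''; your route through the unit $R \to \mathrm{FP}(R(G/e))$ is a clean way to make this explicit, and is equivalent to the more direct observation that $(-)(G/e)$ is a left adjoint (hence preserves the coequalizer defining $R/(x)$) and the two maps $R[x_{G/H}] \rightrightarrows R$ already agree at level $G/e$ when $\mathrm{Res}_e^H(x) = 0$.
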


\begin{proof}
	Assume $x \in R(G/H)$ restricts to zero in $R(G/K)$. Then $x$ restricts to zero in $R(G/e)$. We may form the coequalizer $R[x_{G/H}] \rightrightarrows R$ along the zero map and the map classifying the element $x \in R(G/H)$. This coequalizer, which we write $R/(x)$, is the quotient of $R$ by the ideal generated by $x$, and is a compact object in the category of $R$-algebras (not necessarily $\Lambda$-clarified). 
	
	Since $x$ restricts to zero in $R(G/e)$, one deduces that $\left( R/(x) \right)(G/e) \cong R(G/e)$, hence $R/(x)$ is $\Lambda$-clarified; it is thus a compact object in the category of $\Lambda$-clarified $R$-algebras. Since $R/(x)$ is a nonzero finitely presented $\Lambda$-clarified $R$-algebra and $R$ is Nullstellensatzian (in the category of $\Lambda$-clarified Tambara functors), the quotient map $R \rightarrow R/(x)$ admits a section. The composition $R \rightarrow R/(x) \rightarrow R$ is the identity, hence sends $x$ to $x$, but $x$ maps to zero in the middle term. Thus $x = 0$. 
\end{proof}

\begin{lemma}\label{lem:NSS-clarified-are-weird}
	Let $R$ be a Nullstellensatzian clarified Tambara functor. If $I$ is any proper ideal of $R$, then $R/I$ has trivial clarification.
\end{lemma}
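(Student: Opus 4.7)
The plan is to prove the statement by reducing to the case of finitely generated ideals and then invoking the Nullstellensatzian property. The case $I = R$ is trivial since $R/R \cong 0$ has trivial clarification, so the content lies in the case that $I$ is a nonzero proper ideal.

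First I would handle the case that $I$ is nonzero and finitely generated as a Tambara ideal. If $I$ is generated by finitely many elements $x_i \in R(G/H_i)$, then $R/I$ arises as a coequalizer $R[x_{G/H_i}]_i \rightrightarrows R$ of free polynomial $R$-algebras (the two maps classifying the tuple $(x_i)$ and zero respectively), so $R/I$ is a finitely presented, hence compact, object of $G {-} \Tamb_{R/}$. Because filtered colimits in the category of clarified $R$-algebras agree with those computed in all $R$-algebras, the inclusion of clarified $R$-algebras into $R$-algebras preserves filtered colimits, so its left adjoint preserves compact objects; hence $(R/I)^{\mathrm{clar}}$ is a compact clarified $R$-algebra. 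Suppose for contradiction that $(R/I)^{\mathrm{clar}} \neq 0$. Because $R$ is Nullstellensatzian in clarified Tambara functors and $R$ itself is the initial clarified $R$-algebra, the compact nonterminal object $(R/I)^{\mathrm{clar}}$ admits a map to $R$. Pre-composing with the canonical map $R/I \to (R/I)^{\mathrm{clar}}$ produces an $R$-algebra morphism $\phi \colon R/I \to R$, and then the composition $R \to R/I \xrightarrow{\phi} R$ is the identity of $R$, since $R$ is the initial $R$-algebra. But any $x \in I(G/H)$ maps to $0$ in $R/I$ and therefore has image $\phi(0) = 0$ in $R$; since the composition is the identity, $x = 0$, contradicting $I \neq 0$.

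To extend to an arbitrary nonzero proper ideal $I$, I would write $I$ as the filtered union of its finitely generated Tambara sub-ideals $I_\alpha$. Because filtered colimits of Tambara functors are computed levelwise and quotient by an ideal commutes with filtered colimits at each level, we have $R/I \cong \operatorname*{colim}_\alpha R/I_\alpha$ as $R$-algebras. Since clarification preserves colimits, we deduce
\[ (R/I)^{\mathrm{clar}} \cong \operatorname*{colim}_\alpha (R/I_\alpha)^{\mathrm{clar}}. \]
The nonzero $I_\alpha$ form a cofinal subdiagram (using $I \neq 0$), and on this subdiagram every term vanishes by the previous paragraph. Hence the filtered colimit is zero.

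The main obstacle I anticipate is the careful bookkeeping required to make the filtered colimit reduction rigorous at the level of Tambara ideals: confirming that a finitely generated Tambara ideal yields a finitely presented (hence compact) quotient $R$-algebra in the sense of \cite{SSW25}, and that the filtered colimit of the $R/I_\alpha$ recovers $R/I$. Once these technicalities are in place, the Nullstellensatzian-plus-contradiction step is essentially formal.
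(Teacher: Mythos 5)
Your proof is correct, but it takes a noticeably longer route than the paper. You reduce to finitely generated sub-ideals $I_\alpha$, show that each $(R/I_\alpha)^{\mathrm{clar}} = 0$ by a compactness-plus-Nullstellensatz argument, and then express $(R/I)^{\mathrm{clar}}$ as the filtered colimit of the $(R/I_\alpha)^{\mathrm{clar}}$. The paper avoids the filtered-colimit reduction entirely: it picks a single nonzero element $x \in I(G/H)$, observes (reusing the argument from \cref{lem:NSS-clarified-implies-MRC}) that $(R/(x))^{\mathrm{clar}} = 0$, and then applies clarification to the quotient map $R/(x) \to R/I$. Since the zero Tambara functor is terminal and any Tambara functor receiving a map from it is itself zero, this immediately forces $(R/I)^{\mathrm{clar}} = 0$. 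The step where you pass from ``$(R/I_\alpha)^{\mathrm{clar}} = 0$ for all $\alpha$'' to the colimit is thus replaced by the simpler observation that a surjection onto $R/I$ from something with trivial clarification suffices; you could simplify your own argument in exactly the same way, since $(x) \subset I_\alpha \subset I$ gives a quotient map $R/(x) \to R/I$ directly. Your reduction to finitely generated ideals, the cofinality claim, and the commutation of clarification with filtered colimits are all unnecessary overhead, though none of them are wrong. One small point you share with the paper: the statement as written allows $I = 0$, in which case $R/I = R$ is clarified and nonzero; both proofs implicitly assume $I \neq 0$, so the lemma should really say ``nonzero proper ideal.''
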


\begin{proof}
	Let $I$ be a proper ideal of $R$ and let $x \in I(G/H)$. If the clarification of $R/(x)$ is nonzero, then we observe as in the proof of \cref{lem:NSS-clarified-implies-MRC} that $x = 0$. If $I$ is nonzero, we can choose $x \in I(G/H)$ nonzero, so that the clarification of $R/(x)$ is zero. Applying clarification to the quotient $R/(x) \rightarrow R/I$ shows that the clarification of $R/I$ receives a map from the zero Tambara functor, hence is zero.
\end{proof}

\cref{lem:NSS-clarified-are-weird} suggests that Nullstellensatzian clarified Tambara functors are close to being field-like. On the other hand, we will see in \cref{thm:NSS-clarified-field-like-are-boring} and \cref{cor:existence-of-non-field-like-NSS-clarifieds} below that field-like Nullstellensatzian clarified Tambara functors are rare.

We start with a lemma. Recall that the free polynomial Tambara functor $\mathcal{A}[x_{G/G}]$ on a generator in level $G/G$ contains an element which we abusively denote $x_{G/G}$ in level $G/G$ corresponding to the identity map under the natural isomorphism \[ \mathrm{Hom}(\mathcal{A}[x_{G/G}], \mathcal{A}[x_{G/G}]) \cong \mathcal{A}[x_{G/G}](G/G) . \]

\begin{lemma}\label{lem:bottom-level-of-free-poly-alg-on-top-level-gen}
	Let $R$ be a $G$-Tambara functor. The free $R$-algebra $R[x_{G/G}]$ on a single generator in level $G/G$ has $R[x_{G/G}](G/e) \cong R(G/e)[\mathrm{Res}_e^G(x_{G/G})]$.
\end{lemma}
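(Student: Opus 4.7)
The plan is to invoke Brun's description of the $G/e$-level of free polynomial $G$-Tambara functors, together with the standard fact that the $G/e$-level of a box product is the tensor product of the $G/e$-levels over $\mathcal{A}(G/e) = \mathbb{Z}$.

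First I would unwind definitions: by \cref{def:free-poly-algs}, we have $R[x_{G/G}] \cong R \boxtimes \mathcal{A}[x_{G/G}]$, where $\mathcal{A}[x_{G/G}]$ is the free polynomial $G$-Tambara functor on a single generator in level $G/G$. Since the $G/e$-level of a box product of Mackey (hence Green or Tambara) functors is obtained as the tensor product of the $G/e$-levels over $\mathbb{Z}$, we immediately get
\[
R[x_{G/G}](G/e) \cong R(G/e) \otimes_{\mathbb{Z}} \mathcal{A}[x_{G/G}](G/e).
\]
The work therefore reduces to computing the single ring $\mathcal{A}[x_{G/G}](G/e)$.

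Next, by \cite[Theorem A]{Bru05}, the $G/e$-level of the free polynomial $G$-Tambara functor on any $G$-set $X$ is a free polynomial $\mathbb{Z}$-algebra on generators indexed by the underlying set of $X$. Specializing to $X = G/G$, the underlying set is a singleton, so $\mathcal{A}[x_{G/G}](G/e) \cong \mathbb{Z}[t]$ on a single generator. Substituting back gives $R[x_{G/G}](G/e) \cong R(G/e)[t]$.

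The one subtlety — and the step where I would be most careful — is identifying the polynomial generator $t$ with $\mathrm{Res}_e^G(x_{G/G})$. This should follow from the naturality of Brun's identification together with the universal property of $x_{G/G}$: under the Yoneda-style correspondence expressing Brun's isomorphism, the distinguished generator of the polynomial ring in level $G/e$ must be the image of the universal element $x_{G/G} \in \mathcal{A}[x_{G/G}](G/G)$ under the structure map associated with the unique $G$-set morphism $G/e \to G/G$, which is precisely the restriction $\mathrm{Res}_e^G$. Alternatively, one can verify this directly using the bispan description of $\mathcal{A}[x_{G/G}](G/e)$, where the identity bispan on $G/G$ restricts to the bispan $G/G \leftarrow G/e = G/e \to G/e$ representing the generator.
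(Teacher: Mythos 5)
Your proposal is correct and follows essentially the same route as the paper: reduce to $\mathcal{A}[x_{G/G}]$ via the box product identity at level $G/e$, invoke Brun's Theorem A to see the result is a one-variable polynomial ring over $\mathbb{Z}$, and then identify the generator with $\mathrm{Res}_e^G(x_{G/G})$ via the bispan description. The only small difference is one of emphasis: you lead with an appeal to ``naturality of Brun's identification,'' which is a bit vague as stated, but you correctly flag the explicit bispan calculation (identity bispan on $G/G$ composed with the restriction bispan $G/G \leftarrow G/e \to G/e \to G/e$) as the way to make it precise, and that is exactly what the paper does as its primary argument.
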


\begin{proof}
	Since $R[x_{G/G}] \cong R \boxtimes \mathcal{A}[x_{G/G}]$, it suffices to prove the result for $\mathcal{A}[x_{G/G}]$.	Recall that $\mathcal{A}[x_{G/G}](G/e)$ is a free polynomial algebra on a single generator \cite[Theorem A]{Bru05}. Additionally, $\mathcal{A}[x_{G/G}](G/e)$ is given by the set of bispans $G/G \leftarrow X \rightarrow Y \rightarrow G/e$ modulo the equivalence relation generated by natural isomorphisms of diagrams which are the identity on the left and right endpoints. Addition is induced by disjoint union of the middle terms. We therefore see that the isomorphism $\mathcal{A}[x_{G/G}](G/e) \cong \Z[t]$ of \cite[Theorem A]{Bru05} identifies $at^b$ with the bispan \[ G/G \leftarrow \left( G/e \right)^{\sqcup ab} \rightarrow \left( G/e \right)^{\sqcup a} \rightarrow G/e \] where the middle map is a disjoint union of $a$ copies of the fold map $\left( G/e \right)^{\sqcup b} \rightarrow G/e$.
	
	Now observe that the ring $\mathcal{A}[x_{G/G}](G/e)$ is given as a set by the bispan morphisms from $G/G$ to $G/e$. The identity on $G/G$ in the category of bispans corresponds to $x_{G/G}$, by Yoneda's lemma. The restriction of $x_{G/G}$ is obtained by postcomposing the identity bispan with \[ G/G \leftarrow G/e \rightarrow G/e \rightarrow G/e . \] As we observed in the previous paragraph, this is precisely a polynomial generator.
\end{proof}

\begin{theorem}\label{thm:NSS-clarified-field-like-are-boring}
	Let $R$ be a Nullstellensatzian clarified $G$-Tambara functor which is field-like and such that the characteristic of the field $R(G/G)$ does not divide $|G|$. Then $R(G/e)$ is algebraically closed.
\end{theorem}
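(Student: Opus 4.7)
The plan is to argue by contradiction: assuming $F := R(G/e)$ is not algebraically closed, I would construct a nonzero finitely presented clarified $R$-algebra admitting no section to $R$, contradicting the Nullstellensatzian hypothesis. First I would reduce to the case where $G$ acts faithfully on $F$ by quotienting by the kernel of the action, which does not affect the conclusion; since $R$ is field-like, $K := R(G/G)$ then identifies with $F^G$, making $F/K$ a finite Galois extension with Galois group $G$.

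The core construction seeks an element $\alpha \in \bar K \setminus K$ such that $K[\alpha]$ and $F$ are linearly disjoint over $K$, equivalently $K[\alpha] \cap F = K$. Given such an $\alpha$, its minimal polynomial $f \in K[x]$ has degree at least two and remains irreducible over $F$. I would form the $R$-algebra $S := R[x_{G/G}]/(f(x_{G/G}))$; by \cref{lem:bottom-level-of-free-poly-alg-on-top-level-gen} its bottom level is $F[t]/(f(t)) \cong F \otimes_K K[\alpha]$, which is a field by linear disjointness, carrying the $G$-action extending that on $F$ and fixing the new root. Hence $S$ is clarified (field bottom level), finitely presented, and nonzero. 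Any section $S \to R$ would restrict at the bottom to a $G$-equivariant ring retraction of the nontrivial field extension $F \hookrightarrow F[\alpha]$, which is impossible; the Nullstellensatzian hypothesis applied within clarified $R$-algebras thus delivers the contradiction.

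The main obstacle will be producing such an $\alpha$ whenever $F$ is not algebraically closed. If no such $\alpha$ exists, then every finite separable extension $L/K$ inside $\bar K$ satisfies $L \cap F \supsetneq K$; specializing to $L$ of prime degree $\ell$ over $K$ forces $L \subseteq F$ and hence $\ell \mid [F:K] = |G|$. This pins the absolute Galois group $G_K$ to a pro-$\pi$ group for $\pi = \{\ell : \ell \mid |G|\}$, with every prime-degree extension of $K$ already sitting inside $F$. I expect the characteristic hypothesis $\mathrm{char}(K) \nmid |G|$, combined with Artin--Schreier theory (to rule out the residual pro-$p$ contributions in the ``wrong'' characteristic) and the finiteness of $F/K$, to force $G_K$ itself to be finite of order at most two; then $[F:K] = |G|$ identifies $F$ with $\bar K$, contradicting the assumption $F \neq \bar K$ and completing the proof.
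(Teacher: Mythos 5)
Your proposal is a proof-by-contradiction reformulation of the paper's argument, built around the same key object: the finitely presented $R$-algebra $R[x_{G/G}]/(f)$ where $f$ has coefficients in $K := R(G/G)$ and remains irreducible over $F := R(G/e)$, so that its bottom level is a field, the algebra is clarified, and the Nullstellensatzian hypothesis forces a section which would put a root of $f$ into $F$. This part matches the paper. The paper then argues directly that every polynomial over $K$ has a root in $F$, uses the characteristic hypothesis and Artin's Lemma to see that $F/K$ is Galois with $K = F^G$, and concludes from normality; Artin--Schreier is not invoked in the paper's proof of this theorem. You instead seek $\alpha \in \bar K\setminus K$ with $K[\alpha]\cap F = K$ and feed its minimal polynomial into the same construction.

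The argument for producing such an $\alpha$ is where your proposal breaks down. The step from ``every prime-degree extension of $K$ sits inside $F$'' to ``the absolute Galois group of $K$ is pro-$\pi$ for $\pi = \{\ell : \ell \mid |G|\}$'' is invalid: an absolute Galois group may have no open subgroups of prime index whatsoever (for example, when it admits a finite simple nonabelian quotient with no prime-index subgroup), so data about prime-degree subextensions says nothing about the remaining finite quotients. The subsequent claim, that Artin--Schreier together with finiteness of $F/K$ forces the absolute Galois group to be finite of order at most two, is left entirely heuristic and is not established. What you actually need is precisely that the open normal subgroup corresponding to $F$ is not contained in the Frattini subgroup of the absolute Galois group of $K$, and this does not follow from the facts you have assembled. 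Separately, you assert $K = F^G$ from field-likeness alone; the paper's characteristic hypothesis is used exactly to conclude $R \cong \mathrm{FP}(F)$, from which $K = F^G$ and the Galois-ness of $F/K$ follow.
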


\begin{proof}
	Let $f$ be any polynomial with coefficients in $R(G/G)$. Since it is an $R$-algebra, $R[x_{G/G}](G/G)$ is an $R(G/G)$-module, hence $f(x_{G/G})$ defines an element of $R[x_{G/G}](G/G)$. Recall the identification $R[x_{G/G}](G/e) \cong R(G/e)[\mathrm{Res}_e^G(x_{G/G})]$ of \cref{lem:bottom-level-of-free-poly-alg-on-top-level-gen}.
	
	Since $R$ is clarified and field-like, $R(G/e)$ is a field, containing $R(G/G)$ as a subfield. Suppose that $f$ is irreducible over $R(G/e)$. Then \[ \left( R[x_{G/G}]/(f) \right)(G/e) \cong R(G/e)[\mathrm{Res}_e^G(x_{G/G})]/(f(\mathrm{Res}_e^G(x_{G/G}))) \] is a field, hence $R[x_{G/G}]/(f)$ is clarified. As it is visibly finitely presented, it admits an $R$-algebra map to $R$, which sends $\mathrm{Res}_e^G(x_{G/G})$ to a root of $f$ in $R(G/e)$. We have shown that every polynomial with coefficients in $R(G/G)$ admits a root in $R(G/e)$. Our characteristic assumption implies $R$ is a fixed-point Tambara functor, so that $R(G/e)$ is a Galois extension of $R(G/G)$ by Artin's Lemma \cite{Art42}.
	
	Let $L$ denote an algebraic closure of $R(G/G)$, and choose an embedding of $R(G/e)$ in $L$. Let $x \in L$ be arbitrary, and let $f$ be the minimal polynomial of $x$ over $R(G/G)$. Since $f$ has a root in $R(G/e)$ and $R(G/e)$ is Galois over $R(G/G)$, we see $x \in R(G/e)$. Thus $R(G/e) \rightarrow L$ is an isomorphism, so that $R(G/e)$ is algebraically closed.
\end{proof}

\begin{corollary}\label{cor:existence-of-non-field-like-NSS-clarifieds}
	Let $G$ be a nontrivial finite group and assume there exists a finite field extension $L$ over $K$ with Galois group $G$. If $\mathrm{char}(L) \neq 0$ or $G \neq C_2$ then there exists a Nullstellensatzian clarified $G$-Tambara functor which is not field-like.
\end{corollary}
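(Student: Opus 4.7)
The plan is to produce a Nullstellensatzian clarified $G$-Tambara functor $R$ with a faithful $G$-action on $R(G/e)$, and to preclude field-likeness by combining \cref{thm:NSS-clarified-field-like-are-boring} with the Artin-Schreier theorem. The base will be the fixed-point Tambara functor $T := \mathrm{FP}(L)$, which is clarified by item (4) of \cref{prop:examples} and on which $G$ acts faithfully at the $G/e$-level via the Galois action on $L$. By \cref{cor:Lambda-clarified-is-compactly-generated} the category of clarified $T$-algebras is compactly generated, hence weakly spectral, so the general machinery of \cite{BSY22} produces a Nullstellensatzian object $R$ inside it. Equivalently, $R$ is a nonzero Nullstellensatzian clarified $G$-Tambara functor equipped with a structure map $T \to R$, and this map embeds $L$ as a $G$-stable subring of $R(G/e)$, so that $G$ acts faithfully on $R(G/e)$.

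Suppose for contradiction that $R$ is field-like. Then $R(G/e)$ is a field and the map $K = L^G \to R(G/G)$ forces $\mathrm{char}(R(G/G)) = \mathrm{char}(L)$. When $\mathrm{char}(L) \nmid |G|$ (which is automatic in characteristic zero), \cref{thm:NSS-clarified-field-like-are-boring} yields that $R(G/e)$ is algebraically closed, and the Artin-Schreier theorem forbids a finite nontrivial group from acting faithfully on an algebraically closed field except as $C_2$ on a characteristic-zero field; this configuration is excluded by the hypothesis ``$\mathrm{char}(L) \neq 0$ or $G \neq C_2$'', a contradiction. To upgrade ``$R$ not field-like'' to ``$R(G/e)$ not a field'' I will show that if $R$ is a clarified Nullstellensatzian Tambara functor with $R(G/e)$ a field, then the injectivity of restrictions (\cref{lem:NSS-clarified-implies-MRC}) combined with the Nullstellensatzian property applied to the quotients $R/(y)$ for nonzero $y \in R(G/H)$ forces every $R(G/H)$ to be a field, i.e.\ $R$ is itself field-like --- which contradicts what has just been ruled out.

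The principal obstacle is the residual case $\mathrm{char}(L) = p > 0$ with $p \mid |G|$, where \cref{thm:NSS-clarified-field-like-are-boring} does not directly apply. The first half of that theorem's proof, producing a root in $R(G/e)$ for every polynomial with coefficients in $R(G/G)$, is characteristic-free; the hard step is converting this relative algebraic-closure statement into genuine algebraic closure of $R(G/e)$ without the hypothesis that $R$ is fixed-point. My plan is to address this either by refining the base $T = \mathrm{FP}(L)$ to a larger clarified Tambara functor engineered so that any Nullstellensatzian object above it is forced to be fixed-point over its $G$-invariants, or by using the Tambara norms in characteristic $p$ to descend the polynomial $\prod_{\sigma \in G} \sigma(f(x))$ (for $f \in R(G/e)[x]$) to a polynomial over $R(G/G)$, and then invoking the relative algebraic closure statement to conclude that $R(G/e)$ is algebraically closed outright.
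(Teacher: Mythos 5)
Your construction is exactly the paper's: take $T = \mathrm{FP}(L)$ with the Galois $G$-action, note it is clarified (so the category of clarified $T$-algebras is weakly spectral by \cref{cor:Lambda-clarified-is-compactly-generated}), produce a Nullstellensatzian clarified $R$ receiving a map from $T$, observe the map is injective since $T$ is field-like, and then rule out field-likeness of $R$ by combining \cref{thm:NSS-clarified-field-like-are-boring} with Artin-Schreier. Your intermediate step identifying ``$R$ not field-like'' with ``$R(G/e)$ not a field'' via \cref{lem:NSS-clarified-implies-MRC} is the same remark the paper makes immediately after stating the corollary.

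The genuinely interesting point is the one you flag at the end: \cref{thm:NSS-clarified-field-like-are-boring} requires $\mathrm{char}(R(G/G)) \nmid |G|$, and in the case $\mathrm{char}(L) = p > 0$ with $p \mid |G|$ (which the corollary's hypothesis ``$\mathrm{char}(L) \neq 0$ or $G \neq C_2$'' explicitly allows, and which is realizable, e.g.\ Artin-Schreier extensions $\F_p(t)[x]/(x^p - x - t)$ of $\F_p(t)$ with $G = C_p$), this hypothesis fails since $\mathrm{char}(R(G/G)) = \mathrm{char}(L)$ once $\mathrm{FP}(L) \hookrightarrow R$. The paper's proof applies \cref{thm:NSS-clarified-field-like-are-boring} without checking this hypothesis, so you have identified a real gap rather than manufactured an unnecessary complication.

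Your two proposed repairs for this case are not yet convincing, though. The first (``refining the base $T$'') is too vague to assess. The second runs into the following problem: the coefficients of $\prod_{\sigma \in G} \sigma(f)$ land in $R(G/e)^G$, which need not coincide with $R(G/G)$ for a non-fixed-point field-like Tambara functor; indeed, in the proof of \cref{thm:NSS-clarified-field-like-are-boring} the sole role of the characteristic hypothesis is precisely to force $R$ to be fixed-point, i.e.\ $R(G/G) \cong R(G/e)^G$. The Tambara norm $\mathrm{Nm}_e^G$ applied in $R[x_{G/G}]$ does lift $\prod_\sigma \sigma(f)(t)$, but only to $R[x_{G/G}](G/G)$, which is a considerably larger ring than $R(G/G)[x_{G/G}]$ (already for $R = \mathcal{A}$ the top level of the free polynomial algebra contains transfers and norms of the generator), so the element you obtain cannot be fed directly into the ``every polynomial over $R(G/G)$ has a root in $R(G/e)$'' step as stated. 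Any honest repair will either have to show $R(G/G) = R(G/e)^G$ independently of the characteristic (perhaps via the Nullstellensatzian property itself), or prove the root-producing step directly for elements of $R[x_{G/G}](G/G)$ that are not $R(G/G)$-polynomials.
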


By \cref{lem:NSS-clarified-implies-MRC}, $R$ is field-like if and only if $R(G/e)$ is a field. Thus \cref{cor:existence-of-non-field-like-NSS-clarifieds} is equivalent to the slightly stronger statement that there exists Nullstellensatzian clarified Tambara functors whose bottom level is not a field.

\begin{proof}
	Let $G$ act faithfully on a field $L$ and form the field-like $G$-Tambara functor $\mathrm{FP}(L)$. We have already observed that there exists a Tambara functor morphism $\mathrm{FP}(L) \rightarrow R$ where $R$ is Nullstellensatzian clarified. Since $\mathrm{FP}(L)$ is field-like, this morphism is injective, hence $G$ acts faithfully on $R(G/e)$. If $R$ were field-like, then \cref{thm:NSS-clarified-field-like-are-boring} implies that $R(G/e)$ is an algebraically closed field with faithful $G$-action. The Artin-Schreier theorem \cite{AS27a} \cite{AS27b} implies that the only way for a nontrivial finite group $G$ to act faithfully on an algebraically closed field is for the characteristic to be zero and $G = C_2$.
\end{proof}

\begin{problem}\label{problem:what-do-they-look-like?}
	Determine the structure of Nullstellensatzian clarified $G$-Tambara functors. In particular, in the situation of \cref{cor:existence-of-non-field-like-NSS-clarifieds}, what is the ring underlying the bottom level of the corresponding Nullstellensatzian clarified Tambara functor?
\end{problem}

The most optimistic guess is that the converse of \cref{thm:NSS-clarified-field-like-are-boring} holds. For example, one might hope that $\mathrm{FP}(\C)$ with $G = C_2$ acting by complex conjugation is a Nullstellensatzian clarified Tambara functor.

\begin{conjecture}\label{conj:FP-C-is-NSS-clarified}
	The following is a complete list of field-like Nullstellensatzian clarified Tambara functors.
	\begin{enumerate}
		\item $\mathrm{FP}(\F)$ for $\F$ an algebraically closed field with the trivial $G$-action.
		\item $\mathrm{FP}(\F)$ for $\F$ an algebraically closed field with $G$ acting by the restriction along a surjection $G \rightarrow C_2$, and $C_2$ acting faithfully.
	\end{enumerate}
\end{conjecture}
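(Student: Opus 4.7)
The plan is to prove the conjecture in two directions: (I) every field-like Nullstellensatzian clarified $G$-Tambara functor appears on the list, and (II) each entry on the list is actually Nullstellensatzian clarified. For (I), let $R$ be such a Tambara functor. By \cref{lem:NSS-clarified-implies-MRC} every restriction in $R$ is injective, and since $R$ is clarified and field-like, $R(G/e)$ is a field on which $G$ acts. The first task is to promote \cref{thm:NSS-clarified-field-like-are-boring} to remove its characteristic hypothesis: in characteristic $0$ this is immediate, and in positive characteristic $p$ with $p \mid |G|$ one should reduce to a $p$-Sylow subgroup $P$ (whose action on any algebraically closed quotient of $R(G/e)$ must be trivial by Artin--Schreier) and then bootstrap from the prime-to-$p$ part where the original Galois argument applies.

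The second task is to show $R \cong \mathrm{FP}(R(G/e))$. Given $H \subset G$ and $\alpha \in R(G/e)^H$, consider the finitely presented $R$-algebra $S_\alpha := R[x_{G/H}]/(\mathrm{Res}_e^H(x_{G/H}) - \alpha)$. A natural generalization of \cref{lem:bottom-level-of-free-poly-alg-on-top-level-gen} identifies the bottom level of $R[x_{G/H}]$ with the polynomial ring $R(G/e)[t_{gH} : gH \in G/H]$, where $G$ permutes the variables by its action on $G/H$ and $\mathrm{Res}_e^H(x_{G/H}) = t_{eH}$. The Tambara ideal generated by $t_{eH} - \alpha$ contains each $G$-translate $t_{gH} - g\alpha$, collapsing the bottom level of $S_\alpha$ to $R(G/e)$, a field; thus $S_\alpha$ is nonzero and clarified. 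The Nullstellensatzian property yields a section $S_\alpha \to R$, which by Yoneda produces $y \in R(G/H)$ with $\mathrm{Res}_e^H(y) = \alpha$. Combined with injectivity of restrictions and the inclusion $\mathrm{Res}_e^H(R(G/H)) \subset R(G/e)^H$, this gives $R(G/H) \cong R(G/e)^H$, hence $R \cong \mathrm{FP}(R(G/e))$ as Green functors; injectivity of restrictions forces the norms on both sides to agree via the double coset formula, promoting this to a Tambara isomorphism. Artin--Schreier applied to the algebraically closed $G$-field $R(G/e)$ then restricts the $G$-action to the two cases on the list.

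For (II), let $S$ be a nonterminal compact clarified $\mathrm{FP}(\F)$-algebra; a section $S \to \mathrm{FP}(\F)$ is equivalent by adjunction to a $G$-equivariant $\F$-algebra map $S(G/e) \to \F$ compatible with the $G$-action on $\F$, i.e., a $G$-fixed closed point of $\mathrm{Spec}(S(G/e))$ with the right residue-field action. Since $\F$ is algebraically closed and $S(G/e)$ is a finitely generated $\F$-algebra (by compactness plus the Brun-type identification of the bottom level of a free polynomial Tambara algebra), every closed point has residue field $\F$, so the task reduces to producing a $G$-fixed closed point. Choosing a maximal $G$-invariant proper ideal $\mathfrak{m} \subset S(G/e)$ via Zorn, the quotient is $G$-simple, hence isomorphic as a $G$-ring to $\mathrm{Coind}_H^G \F$ for some subgroup $H$; a $G$-fixed closed point corresponds to some such $\mathfrak{m}$ with $H = G$. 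Absent this, the plan is to derive a contradiction from $H \subsetneq G$ by lifting the type-$H$ idempotent of $\mathrm{Coind}_H^G \F$ to a type-$H$ idempotent of $S(G/e)$, contradicting the clarified hypothesis.

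The principal obstacle is this lifting-of-idempotents step in (II). Idempotents lift modulo any ideal contained in the Jacobson radical, but the maximal $G$-invariant ideal $\mathfrak{m}$ need not lie there. A promising approach is to localize $S(G/e)$ at the finite $G$-orbit of closed points cut out by $\mathfrak{m}$, reducing to a semi-local setting where $\mathfrak{m}$ becomes the Jacobson radical, lift via Hensel's lemma, and then descend the lifted idempotent by a clearing-denominators or Artin approximation argument. When $|G|$ is invertible in $\F$, the Reynolds operator $\tfrac{1}{|G|}\sum_g g$ bypasses the issue entirely. A secondary obstacle is removing the characteristic hypothesis for algebraic closure in (I); the existing Galois argument breaks when $\mathrm{char}(R(G/G)) \mid |G|$ and will likely require a separate analysis involving a $p$-Sylow reduction and the interaction of purely inseparable and Artin--Schreier extensions with the clarified hypothesis.
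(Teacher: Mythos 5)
This statement is labeled a \emph{Conjecture} in the paper (\cref{conj:FP-C-is-NSS-clarified}); the paper offers no proof. So there is nothing to compare your attempt against; I can only assess it on its own terms. Your two-direction skeleton is the obvious starting point, and the positive ingredients you cite (\cref{lem:NSS-clarified-implies-MRC}, \cref{thm:NSS-clarified-field-like-are-boring}, \cref{lem:bottom-level-of-free-poly-alg-on-top-level-gen}, Brun's identification of bottom levels, Artin--Schreier) are all the right tools. However, the two places you flag as ``obstacles'' are in fact genuine gaps, and one of them is more severe than your proposed fix suggests.

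The idempotent-lifting step in (II) is not merely an obstacle to be handled by Hensel plus Artin approximation; the idempotent you want can simply fail to exist. Consider $S(G/e) = \F[x]$ with $G = C_2$ acting by $x \mapsto -x$ and trivial action on $\F$ (characteristic $\neq 2$). The ideal $\mathfrak{m} = (x^2-1)$ is a maximal $G$-invariant proper ideal, and $S(G/e)/\mathfrak{m} \cong \F \times \F \cong \mathrm{Coind}_e^{C_2}\F$ has a type-$e$ idempotent. But $\F[x]$ is a domain, so it has no nontrivial idempotents whatsoever. Your plan would attempt to lift and thereby ``contradict'' the clarified hypothesis, yet there is no contradiction: the correct conclusion here is that you chose the wrong $\mathfrak{m}$ (the $G$-fixed point $\mathfrak{m} = (x)$ exists and does the job). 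Hensel's lemma produces idempotents in the $\mathfrak{m}$-adic completion, not in $\F[x]$, and no Artin-approximation argument will descend an idempotent to a domain. The Reynolds operator only averages over $G$; it does not manufacture idempotents either. So the architecture of your (II) argument --- pick any maximal $G$-invariant ideal and dichotomize --- cannot work as stated. A correct argument would have to show that \emph{some} maximal $G$-invariant ideal has $H = G$, and the clarified-$G$-ring hypothesis alone is too weak to force this: $\F[x, x^{-1}]$ with $\tau(x) = -x$ is a clarified $C_2$-ring on which $C_2$ acts freely with no fixed closed points at all. Thus any proof of (II) must leverage the finitely presented \emph{Tambara}-algebra structure --- in particular, the norm closure of Tambara ideals --- to rule out such rings arising as $S(G/e)$. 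Your proposal never engages with this; it treats $S(G/e)$ as an abstract clarified $G$-ring, and at that level of generality (II) is false.

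The second gap is the $\mathrm{char}\,(R(G/G)) \mid |G|$ case in (I). The only use of the characteristic hypothesis in \cref{thm:NSS-clarified-field-like-are-boring} is to deduce that $R$ is a fixed-point Tambara functor (so that Artin's Lemma applies to $R(G/e)/R(G/G)$). Your proposal suggests a ``$p$-Sylow reduction'' and invokes Artin--Schreier for ``any algebraically closed quotient of $R(G/e)$,'' but $R(G/e)$ is a field and has no proper quotients, and the Sylow reduction in the category of $\Lambda$-clarified Tambara functors is not explained (restriction to a Sylow subgroup does not obviously preserve Nullstellensatzianity). This case needs a genuinely new idea, and as written the proposal does not contain one. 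Between these two gaps, the proposal does not constitute a proof; it is a plausible framework that correctly identifies the known ingredients and honestly flags where the difficulties lie, but the difficulties are real and unresolved, which is consistent with the statement's status as an open conjecture.
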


\bibliographystyle{alpha}
\bibliography{ref}

\end{document}